\documentclass[a4paper, 12pt]{amsart}
\usepackage{amssymb, amsmath, amsthm, color, MnSymbol}
\usepackage[colorlinks=true, breaklinks=true, linkcolor=black, citecolor=blue, urlcolor=red]{hyperref}
\usepackage[francais,english]{babel}
\usepackage{graphicx}

\usepackage{tikz-cd}%For commutative diagrams
\usepackage{tikz}
\usetikzlibrary{arrows.meta, shapes, positioning}
\usetikzlibrary{decorations.markings}

\pgfdeclarelayer{background}
\pgfdeclarelayer{nodelayer}
\pgfdeclarelayer{edgelayer}
\pgfsetlayers{background,edgelayer,nodelayer,main}
\tikzset{
  none/.style={}
}
\tikzset{every loop/.style={min distance=12mm, looseness=4}} % Adjust distance and looseness globally

\newtheorem{theorem}{Theorem}[section]

\newtheorem{lemma}[theorem]{Lemma}
\newtheorem{corollary}[theorem]{Corollary}
\newtheorem{proposition}[theorem]{Proposition}

\newtheorem{definition}[theorem]{Definition}

\newtheorem{remark}[theorem]{Remark}
\newtheorem{example}[theorem]{Example}

\newtheorem{theoremIntro}{Theorem}

\newcommand{\act}{\curvearrowright}
\newcommand{\onto}{\twoheadrightarrow}

\newcommand{\cA}{\mathcal A}
\newcommand{\cAL}{\mathcal A_\Lambda}

\newcommand{\Ball}{B_{\mathbf C}}
\newcommand{\C}{\mathbf C}

\newcommand{\CKA}{Cuntz--Krieger algebra}
\DeclareMathOperator{\diag}{diag}

\DeclareMathOperator{\End}{End}

\DeclareMathOperator{\fd}{fd}

\DeclareMathOperator{\full}{full}

\DeclareMathOperator{\id}{id}
\DeclareMathOperator{\Irr}{Irr}
\newcommand{\fK}{\mathfrak K}

\DeclareMathOperator{\Mod}{Mod}
\DeclareMathOperator{\mr}{MR}
\newcommand{\N}{\mathbb N}

\newcommand{\cO}{\mathcal O}
\DeclareMathOperator{\ob}{ob}
\newcommand{\cOL}{\mathcal O_\Lambda}

\newcommand{\cP}{\mathcal P}
\DeclareMathOperator{\Pfd}{Pfd}

\newcommand{\cPL}{\mathcal P_\Lambda}
\DeclareMathOperator{\Pdim}{Pdim}
\DeclareMathOperator{\PU}{PU}

\DeclareMathOperator{\Red}{Red}
\DeclareMathOperator{\Rep}{Rep}
\DeclareMathOperator{\Hilb}{Hilb}

\DeclareMathOperator{\Spec}{Spec}

\newcommand{\ti}{\tilde}
\newcommand{\cT}{\mathcal T}
\newcommand{\T}{\mathbb T}

\newcommand{\varep}{\varepsilon}
\newcommand{\wh}{\widehat}
\newcommand{\Z}{\mathbb Z}

\title{Classification of representations of higher-rank graph C$^*$-algebras}

\author{Arnaud Brothier, Aidan Sims and Dilshan Wijesena}
	\address{Arnaud Brothier\\ University of Trieste, Via Weiss, 2 34128 Trieste, Italia and
	School of Mathematics and Statistics, University of New South Wales, Sydney NSW 2052, Australia}
	\email{arnaud.brothier@gmail.com\endgraf
		\url{https://sites.google.com/site/arnaudbrothier/}}
\address{Aidan Sims\\ School of Mathematics and Statistics, University of New South Wales, Sydney NSW 2052, Australia}
\email{aidan.sims@unsw.edu.au}
	\email{dilshan.wijesena@hotmail.com}

\begin{document}

\begin{abstract}
We develop new techniques for the construction and classification of representations of row-finite and locally convex higher-rank graph C$^*$-algebras $\mathcal O$.
This class includes Cuntz--Krieger algebras associated to row-finite directed graphs.
Our approach relies on the representation theory of a certain non-self-adjoint algebra and a lifting process of representations.
We introduce a novel dimension vector for representations of $\mathcal{O}$ yielding a countable partition of the spectrum.
Given a Cuntz--Krieger algebra and a finite dimension vector, we construct a smooth manifold parametrising the corresponding spectral component.
Our techniques are both explicit and functorial.
\end{abstract}

\maketitle

\section{Introduction}
Objects such as groups or algebras are often best understood through the study of their actions.
In the case of C$^*$-algebras, this viewpoint typically leads to the investigation of their representations on Hilbert spaces, which are in general difficult to construct explicitly.
Jones observed that, under suitable conditions, actions of relatively basic objects can be lifted to actions of more complex ones \cite{Jones17,Jones18}.
This idea was successfully applied to construct new unitary representations of the Richard Thompson groups $F$, $T$, and $V$, as well as related groups, and to establish analytic and geometric properties of these groups \cite{Brothier-Jones19a,Brothier23}.
Although these techniques were originally developed in a group-theoretic setting, they were later adapted to the construction and classification of explicit representations of C$^*$-algebras \cite{Brothier-Jones19b}.
The first C$^*$-algebra to benefit from this approach was the Cuntz(--Dixmier) algebra $\mathcal{O}_2$ \cite{Cuntz77,Dixmier64}.
The first and third authors subsequently developed powerful techniques which,
among other results, unravel geometric structures on a large piece of the
spectrum of $\cO_2$
\cite{Brothier-Wijesena25,Brothier-Wijesena26,Brothier-Wijesena24}. This was a
complete surprise---especially since $\cO_2$ is a non-type I simple
C$^*$-algebra and thus, by general theorems, has, a priori, a highly
pathological spectrum \cite{Glimm61}.

The Cuntz(--Dixmier) algebra falls into the very general class of graph
C$^*$-algebras (sometimes called Cuntz--Krieger algebras of directed graphs)
\cite{Cuntz-Krieger80,Kumjian-Pask-Raeburn-Renault97,Kumjian-Pask-Raeburn98},
see also \cite{Raeburn05}. Specifically, the Cuntz algebra $\cO_n$ is the
C$^*$-algebra of the directed graph with one vertex and $n$ loops. Graph
C$^*$-algebras form a rich class of C$^*$-algebras. They include, up to
stable isomorphism, all simple purely infinite nuclear C$^*$-algebras with
torsion-free $K_1$-groups, and all AF algebras (see for instance
\cite{Raeburn05}). A further generalisation, developed by Kumjian and Pask
based on earlier work of Robertson and Steger on groups acting on buildings \cite{RobSte},
replaces directed graphs with a larger class of combinatorial objects called
\emph{higher-rank graphs} \cite{Kumjian-Pask00} (see also \cite{Raeburn-Sims-Yeend03} for a further extension and also \cite{Lawson-Vdovina20, Lawson-Vdovina22}). 
Their aim was to realise larger classes of
C$^*$-algebras including purely infinite C$^*$-algebras with torsion in $K_1$,
and non-AF stably finite C$^*$-algebras. These $k$-graph C$^*$-algebras include interesting
examples beyond the reach of graph C$^*$-algebras, such as groupoid C$^*$-algebras constructed
from Brin's celebrated higher dimensional Thompson groups \cite{Brin04} (see Section \ref{sec:example}).
Graph C$^*$-algebras and their higher-rank generalisation have been intensively studied.
Their ideal structure is well-understood and thus there exist explicit formulae to
describe their primitive spectra \cite{Huef-Raeburn97, Hong04, Bates-Hong-Raeburn-Szymanski02, ChristensenNeshveyev24, BrixCarlsenSims25}.
However, these C$^*$-algebras are almost never of type~I, and thus their representation theory, and hence their spectra, remain deeply mysterious.

\textbf{Purpose of this paper.}
We establish powerful new tools for constructing and studying representations of (higher-rank) graph C$^*$-algebras, and apply them to key examples.
We also develop a deeper understanding of the techniques, which were first used on the Cuntz algebra in a rather indirect manner,
revealing a critical use of a non-self-adjoint algebra.

\textbf{Strategy and main results.}
A higher-rank graph $\Lambda$ or $k$-graph is roughly speaking the path space of a countable directed graph whose edges are coloured by $\{1,\cdots,k\}$ that we mod out by relations of the form $\mu\circ \nu = \nu'\circ \mu'$ for bi-coloured paths
$(\mu,\mu')$ and $(\nu,\nu')$ of length two with colours in opposite orders (see Section \ref{sec:higher-rank graphs} for details).
The $1$-graphs correspond to directed graphs (also called quivers).
A $k$-graph $\Lambda$ defines a universal C$^*$-algebra $\cOL$ generated by isometries $x_\lambda$ indexed by vertices and edges of $\Lambda$.
A representation $x_\lambda \mapsto X_\lambda$ of $\cOL$ on $L$ decomposes over the vertices $\oplus_v L_v$ and each edge $\lambda :v\to w$ defines a partial isometry $X_\lambda$ with domain $L_v$ and codomain contained in $L_w$.
The generators are subject to four axioms in general but if $k=1$ we may remove one of them and recover the Cuntz--Krieger algebras of directed graphs.

In this article we define a new universal C$^*$-algebra $\cPL$, termed \emph{Pythagorean}, that is obtained by removing one type of relations (the one asking that the $x_\lambda$ are partial isometries).
Note that for convenience, we denote by $a_\lambda$ the generators of $\cPL$  so that the quotient map $\cPL\onto\cOL$ is given by the formula $a_\lambda\mapsto s_\lambda^*$ (hence taking adjoints).
Each representation $H$ of $\cPL$ still decomposes into a direct sum of $H_v$ over the vertices $v$ of $\Lambda$.
For any $\Lambda$ the C$^*$-algebra $\cPL$ admits many finite dimensional representations while generically $\cOL$ has none.
By adapting a construction pioneered by Jones we define a directed system of Hilbert spaces constructed from $H$ which yields a representation $\Pi(H)$ of $\cOL$.
This process is functorial $\Pi:\Rep(\cPL)\to\Rep(\cOL)$ and is moreover essentially surjective: any representation $L$ of $\cOL$ is in fact of the form $\Pi(H)$, up to isomorphism.
The construction is explicit: we may interpret vectors of $\Pi(H)$ as classes of trees whose leaves are decorated by vectors of $H$. The generators of C$^*$-algebra $\cOL$ act by growing and reducing trees.

We now have a new and practical way to construct \emph{all} representations of $\cOL$. We want to go further and perform some classification.
We could naively take the dimension vector of $H$ and hope to obtain an invariant for $L\simeq\Pi(H)$.
This does not work because multiple $H$ can give the same representation $L$ of $\cOL$.
But if there exists at least one \emph{finite dimensional} $H$ satisfying $\Pi(H)\simeq L$, then there is a \emph{smallest} such $H$, which we denote by $\Sigma(L)$.
The dimension vector of $\Sigma(L)$ is then an invariant of $L$ (which depends on the choice of the $k$-graph $\Lambda$). Better still, $L\mapsto\Sigma(L)$ is functorial and so $\Sigma(L)$ itself is an invariant of $L$.
Finally, a copy of $\Sigma(L)$ can be found inside $L$: it is the smallest subspace of $L$ closed under the $x_\lambda^*$ that generates $L$ as a representation of $\cOL$.

Now, we could naively try to classify representations of $\cPL$ and hope to obtain a classification of representations of $\cOL$.
The situation is more delicate than this.
Indeed, there are intertwiners $\Pi(H)\to\Pi(K)$ of representations of $\cOL$ that do not come from intertwiners $H\to K$ of representations of $\cPL$:
an irreducible representation $H$ of $\cPL$ may yield a reducible representation $\Pi(H)$ of $\cOL$, see Example \ref{ex:reducible}.
Rather than considering the representation category $\Rep(\cPL)$ we consider the category of modules $\Mod(\cAL)$ of the non-self-adjoint subalgebra $\cAL$ of $\cPL$ generated by the $a_\lambda$ (we use the term \emph{modules} for continuous linear actions
of non-self-adjoint algebras, and the term \emph{representations} for C$^*$-algebras).
The categories $\Mod(\cAL)$ and $\Rep(\cPL)$ have same objects but the former has more arrows, because its arrows need not intertwine the adjoints of the generators.
Our functor $\Pi:\Rep(\cPL)\to \Rep(\cOL)$ extends to $\Mod(\cAL)$: this means that $\Pi$ transports arrows of $\Mod(\cAL)$ as well.

The category $\Mod(\cAL)$ behaves differently than the representation category of a C$^*$-algebra: the orthogonal complement of a submodule may not be a submodule.
However, our analysis demonstrates in Proposition~\ref{prop:decomposition} that if $H$ is a finite dimensional $\cAL$-module,
then it has a unique decomposition
\[
H = H_1\oplus\cdots\oplus H_n \oplus Z
\]
such that each $H_i$ is an irreducible submodule, and $Z$ is a Hilbert subspace (the largest containing no subrepresentation).
Then $\Pi(H)$ is isomorphic to the direct sum of the $\Pi(H_i)$ and each $\Pi(H_i)$ is irreducible (as a representation of $\cOL$).
Moreover, $\Pi(H_i)\simeq \Pi(H_j)$ as representations of $\cOL$ if and only if $H_i\simeq H_j$ as $\cAL$-modules.
Hence, when restricted to finite dimensional $\cAL$-modules $H$ for which $Z$ is trivial (we call such modules $H$ \emph{full} modules),
$\Pi$ becomes a bijection on hom-spaces between $\Mod(\cAL)$ and $\Rep(\cOL)$.
This allows us to efficiently classify representations of $\cOL$.

The upshot is that, in practice, $\cAL$-modules are significantly easier to construct and to test for irreducibility than representations of $\cOL$.
We derive a novel dimension vector and dimension, termed \emph{Pythagorean} dimension, for a representation $L$ of $\cOL$---namely
$$v\in \Lambda^0\mapsto\dim(a_v\cdot H) \text{ and } \dim(H)$$
under the assumption that $H$ is a full $\cAL$-module satisfying $\Pi(H)\simeq L$.
These are new invariants for classifying representations of $\cOL$ that are built using the choice of the $k$-graph $\Lambda$.
We have obtained, among others, the following result.

\begin{theoremIntro}(Corollary \ref{cor:categories})
Let $\Lambda$ be a countable row-finite locally convex $k$-graph with $k\geq 1$ with associated higher-rank graph C$^*$-algebra $\cOL$ and non-self-adjoint algebra $\cAL$.
Let $\Mod_{\full,\fd}(\cAL)$ be the category of finite dimensional and full $\cAL$-modules and let $\Rep_{\Pfd}(\cOL)$ be the category of representations of $\cOL$ with finite Pythagorean dimension.
Then our constructions $H\mapsto \Pi(H)$ and $L\mapsto \Sigma(L)$ define an equivalence of categories between $\Mod_{\full,\fd}(\cAL)$ and $\Rep_{\Pfd}(\cOL)$.
Additionally, they preserve irreducibility and isomorphism classes.
\end{theoremIntro}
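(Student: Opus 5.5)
The plan is to assemble the equivalence from the structural results announced in the introduction. Since the intro theorem is Corollary~\ref{cor:categories}, I take as available: the functor $\Pi:\Mod(\cAL)\to\Rep(\cOL)$ is defined on arrows of $\Mod(\cAL)$; every finite dimensional $\cAL$-module decomposes as $H=H_1\oplus\cdots\oplus H_n\oplus Z$ (Proposition~\ref{prop:decomposition}); each $\Pi(H_i)$ is irreducible; and $\Pi(H_i)\simeq\Pi(H_j)$ if and only if $H_i\simeq H_j$. The proof has three steps: $\Pi$ lands in $\Rep_{\Pfd}(\cOL)$, $\Pi$ is essentially surjective there with quasi-inverse $\Sigma$, and $\Pi$ is fully faithful on $\Mod_{\full,\fd}(\cAL)$.

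\textbf{Step 1 (image and essential surjectivity).} If $H$ is full and finite dimensional, then $\Pi(H)$ has finite Pythagorean dimension by definition, since $H$ itself witnesses it. Conversely, let $L\in\Rep_{\Pfd}(\cOL)$, so $L\simeq\Pi(H)$ for some finite dimensional full $H$. Among all finite dimensional $K$ with $\Pi(K)\simeq L$ there is a smallest one, $\Sigma(L)$. Concretely, $\Sigma(L)$ is the smallest subspace of $L$ closed under the $x_\lambda^*$ that generates $L$, and $\Pi(\Sigma(L))\simeq L$ through the canonical map sending the trivial tree decorated by $\xi$ to $\xi$. I would then check that $\Sigma(L)$ is full. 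Removing $Z$ does not change $\Pi$, because vectors of $Z$ are pushed by the $a_\lambda$ into $H_1\oplus\cdots\oplus H_n$ at deep enough levels of the directed system. Minimality then forces the $Z$-part of $\Sigma(L)$ to vanish. This gives the natural isomorphism $\Pi\circ\Sigma\simeq\id$.

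\textbf{Step 2 (fully faithful).} Write $H=\bigoplus_i H_i$ and $K=\bigoplus_j K_j$ as decompositions into irreducible submodules with no $Z$-part. Then $\Pi(H)\simeq\bigoplus_i\Pi(H_i)$, and
\[
\mathrm{Hom}_{\cOL}(\Pi(H),\Pi(K))=\bigoplus_{i,j}\mathrm{Hom}_{\cOL}(\Pi(H_i),\Pi(K_j)).
\]
Since the $\Pi(H_i)$ are irreducible, each summand is $\C$ or $0$ according to whether $H_i\simeq K_j$. I would need the matching statement on the module side: $\mathrm{Hom}_{\cAL}(H_i,K_j)$ is $\C$ or $0$ according to the same criterion.
\begin{itemize}
\item If $H_i\not\simeq K_j$, a module map has a proper kernel or image that is a submodule, which contradicts irreducibility.
\item If $H_i\simeq K_j$, a Schur-type argument applies: an endomorphism of a finite dimensional irreducible module has an eigenvalue $c$, and the kernel of $T-c$ is a nonzero submodule, so $T=c$.
\end{itemize}
A second point needs care. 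A module map $H\to K$ need not respect the direct sum decomposition, because complements are not submodules. Equally, the cross terms $H_i\to K_j$ must be accounted for, so hom-spaces do decompose as a direct sum over pairs $(i,j)$. Once this is settled, $\Pi$ is injective on each one-dimensional piece, since it sends a nonzero scalar multiple of an isomorphism to a nonzero map. Hence $\Pi$ is bijective on hom-spaces.

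Steps 1 and 2 together give the equivalence, with $\Sigma$ as quasi-inverse. Preservation of irreducibility and isomorphism classes then follows formally from any equivalence of categories, or directly from the decomposition.

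\textbf{Main obstacle.} I expect the hardest part to be the fullness of $\Sigma(L)$ together with the naturality of $\Sigma$ on arrows. To handle it, I would first show that an intertwiner $L\to L'$ maps $\Sigma(L)$ into $\Sigma(L')$. The argument is that the image of $\Sigma(L)$ is closed under the $x_\lambda^*$, and that it generates the image subrepresentation, to which the minimality of $\Sigma$ can be applied.
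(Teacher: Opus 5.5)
Your overall architecture matches the paper's proof: Schur-type arguments on both sides give full faithfulness, discarding the residual part gives essential surjectivity, and $\Sigma(L)$ is the smallest complete submodule of $UL$. The genuine gap is at the point you yourself flag as the main obstacle, namely the claim that an intertwiner $\theta:L\to L'$ maps $\Sigma(L)$ into $\Sigma(L')$. Minimality cannot give this.

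Put $M=\overline{\theta(L)}$. Then $\theta(\Sigma(L))$ is indeed a finite-dimensional submodule of $UM$ that generates $M$. But minimality of $\Sigma(M)$ only yields $\Sigma(M)\subseteq\theta(\Sigma(L))$, which is a lower bound. You need the upper bound $\theta(\Sigma(L))\subseteq\Sigma(L')$, and you would still have to relate $\Sigma(M)$ to $\Sigma(L')$.

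The paper obtains the upper bound from Proposition~\ref{prop:decomposition}(3). Write $\Sigma(L)=\bigoplus_i H_i$ with each $H_i$ irreducible. Since $\ker(\theta|_{H_i})$ is a submodule, each $\theta(H_i)$ is either $0$ or an irreducible finite-dimensional submodule of $UL'$. Lemma~\ref{lem:key} says a finite-dimensional submodule meets every complete submodule; its proof uses compactness of a finite-dimensional unit sphere. Since $\Sigma(L')$ is complete, this forces $\theta(H_i)\subseteq\Sigma(L')$.

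Alternatively, you can deduce the inclusion formally from your Step~2. Let $\varepsilon_L:\Pi(\Sigma(L))\to L$, $[\lambda,\xi]\mapsto X_\lambda\xi$, be the isomorphism from your Step~1, so that $\varepsilon_L\circ J_{\Sigma(L)}$ is the inclusion $\Sigma(L)\hookrightarrow L$. Surjectivity of $\Pi$ on hom-spaces gives a module map $T:\Sigma(L)\to\Sigma(L')$ with $\Pi(T)=\varepsilon_{L'}^{-1}\theta\,\varepsilon_L$. Hence $\theta|_{\Sigma(L)}=\varepsilon_{L'}\Pi(T)J_{\Sigma(L)}=\varepsilon_{L'}J_{\Sigma(L')}T=T$, which takes values in $\Sigma(L')$. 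Without one of these two arguments, $\Sigma$ is not shown to be a functor. So neither its role as quasi-inverse nor the naturality of $\Pi\circ\Sigma\cong\mathrm{id}$ is established.

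Two smaller points.

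First, your reason why removing $Z$ does not change $\Pi$ is not correct as stated: vectors of $Z$ are in general never pushed exactly into $H_1\oplus\cdots\oplus H_n$. Take the rose with two petals on $\C^2$, with $A_1e_1=A_2e_1=e_1/\sqrt2$, $A_1e_2=(e_1+e_2)/2$ and $A_2e_2=(e_2-e_1)/2$. Then $\C e_1$ is a submodule and $Z=\C e_2$ is residual, but every $A_\mu e_2$ has $e_2$-coordinate $2^{-|\mu|}\neq0$. What holds is only the approximate statement, namely completeness of $H_s$ in the sense of Proposition~\ref{prop:complete}. Deriving that from residuality of $Z$ is Proposition~\ref{prop:weak-complete}, which again rests on Lemma~\ref{lem:key}. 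This is fine if you take all of Proposition~\ref{prop:decomposition}, including~(7), as given, but not for the reason you state.

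Second, in Step~2 your worry about complements is unfounded for full modules. Since $Z=0$, each $\bigoplus_{j\neq i}H_j$ is a submodule. So the orthogonal projections onto the $H_i$ commute with the $A_\lambda$, as in the uniqueness part of the proof of Proposition~\ref{prop:decomposition}(1), and $\mathrm{Hom}(H,K)=\bigoplus_{i,j}\mathrm{Hom}(H_i,K_j)$ follows at once. Together with injectivity of $\Pi$ on arrows (from $\Pi(T)J_H=J_KT$) and your dimension count, this does give full faithfulness.
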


Our results rely heavily on a technical lemma (Lemma~\ref{lem:key}) whose proof uses a compactness argument. This limits our results to representations $L$ of $\cOL$ with \emph{finite} Pythagorean dimension.

We now specialise to ordinary graphs $\Lambda$ and aim to provide \emph{moduli spaces} for representations of $\cOL$: geometrical objects indexing irreducible classes of representations of $\cOL$.
Given a finite dimension vector $d$ (i.e.~a finitely supported map from the vertex set to the natural numbers) consider the matrix $\cAL$-modules of dimension $d$.
An easy argument shows that they form a compact real smooth manifold $\mr(d)$. The irreducible ones $\Irr(d)$ form an open subset of $\mr(d)$ and thus $\Irr(d)$ is either empty or forms a submanifold of same dimension.
The space of unitary equivalence classes of elements of $\Irr(d)$ has a unique smooth-manifold structure.
Our functor $\Pi$ realises a bijection from the space $\Spec(d)$ of $d$-dimensional irreducible representations of $\cOL$ modulo unitary conjugacy to
the smooth manifold of unitary equivalence classes of elements of $\Irr(d)$; we call this the \emph{Pythagorean dimension $d$ spectrum} of $\cOL$.

\begin{theoremIntro}\label{theo:B}
Let $\cOL$ be the Cuntz--Krieger algebra of a countable row-finite directed graph $\Lambda$.
The spectrum $\Spec$ of $\cOL$ partitions as $\bigcup_{d \in \N[\Lambda^0] \sqcup \{\infty\}} \Spec(d)$.
For $d \in \N[\Lambda^0]$, the functor $\Pi$ defines a bijection between $\Spec(d)$ and the space $\Irr(d)/{\sim_u}$
of $d$-dimensional irreducible matrix modules for the non-self-adjoint algebra $\cAL$ modulo unitary conjugacy.
For $v \in \Lambda^0$, let $D(v):=\sum_{\lambda\in v\Lambda^1} d{s(\lambda)}$
(the sum over the entries of $d$ at the sources of edges $\lambda$ with range $v$).
If $\Irr(d)/{\sim_u} \not= \emptyset$, then it is a real, smooth manifold of dimension
$$1 + 2  \sum_{v\in \Lambda^0} dv(D(v) - dv).$$
\end{theoremIntro}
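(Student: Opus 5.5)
The plan has four steps: the partition, the bijection via the earlier equivalence, the manifold structure of $\mr(d)$, and the free quotient by unitaries.

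\emph{Partition and bijection.} For an irreducible $L$, the Pythagorean dimension vector is defined through a full finite-dimensional $\cAL$-module $H$ with $\Pi(H)\simeq L$. Since $\Sigma(L)$ is the minimal such module, the vector is $v\mapsto\dim(a_v\Sigma(L))$ whenever $\Sigma(L)$ exists, and it is $\infty$ otherwise. This gives a well-defined map $\Spec\to\N[\Lambda^0]\sqcup\{\infty\}$, whose fibres are the sets $\Spec(d)$; this is the partition. For finite $d$ we invoke Corollary~\ref{cor:categories}. The functors $\Pi$ and $\Sigma$ form an equivalence between $\Mod_{\full,\fd}(\cAL)$ and $\Rep_{\Pfd}(\cOL)$ that preserves irreducibility and isomorphism classes. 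An irreducible finite-dimensional module is automatically full, since $Z=0$ by Proposition~\ref{prop:decomposition}. So $\Pi$ induces a bijection from isomorphism classes of $d$-dimensional irreducible $\cAL$-modules onto $\Spec(d)$.

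It remains to check that isomorphism in $\Mod(\cAL)$ of irreducible modules, a priori only via invertible (non-unitary) intertwiners, agrees with unitary conjugacy. Take an invertible intertwiner $T$. By Schur's lemma the module endomorphisms are scalars, and the Pythagorean relation $\sum_\lambda a_\lambda^*a_\lambda=$ projection, which holds for both modules, should force $T^*T$ to be an intertwiner. Then $T^*T$ is a scalar, so $T$ is a multiple of a unitary.

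\emph{Manifold structure.} A $d$-dimensional matrix module is a family of matrices $A_\lambda\in M_{d s(\lambda)\times dv}(\C)$ over $\lambda\in v\Lambda^1$, with $\sum_\lambda A_\lambda^*A_\lambda=1$ on each block $\C^{dv}$. Stacking them, for each vertex $v$ this says $(A_\lambda)_\lambda$ is an isometry $\C^{dv}\to\C^{D(v)}$. So $\mr(d)=\prod_v V_{dv}(\C^{D(v)})$ is a product of complex Stiefel manifolds. Its real dimension is $\sum_v\big(2dvD(v)-dv^2\big)$, using $\dim_\R V_k(\C^n)=2nk-k^2$. The subset $\Irr(d)$ is open: reducibility means there is a proper invariant subspace, which is a closed condition by compactness of Grassmannians.

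\emph{Quotient.} The group $U(d)=\prod_v U(dv)$, of real dimension $\sum_v dv^2$, acts by conjugation. By Schur, the stabiliser of an irreducible point is the scalar circle $\T$. Hence $\PU(d)=U(d)/\T$ acts freely on $\Irr(d)$, and properly since the group is compact. By the slice theorem the quotient is a smooth manifold of dimension
\[
\sum_v\big(2dvD(v)-dv^2\big)-\sum_v dv^2+1=1+2\sum_v dv(D(v)-dv),
\]
as claimed.

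The main obstacle is the first step. One must verify that the isomorphism relation furnished by Corollary~\ref{cor:categories} coincides with unitary conjugacy, which requires controlling non-unitary intertwiners between irreducible modules as sketched above. One must also confirm that the dimension vector used in $\Spec(d)$ is that of $\Sigma(L)$, using minimality and the description of $\Sigma(L)$ inside $L$.
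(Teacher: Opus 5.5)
Apart from one step, your outline is the paper's proof. It uses the same partition by P-dimension vector and the bijection coming from Proposition~\ref{prop:decomposition} and Corollary~\ref{cor:categories}. Your $\mr(d)$ as a product of Stiefel manifolds is the paper's regular level set $f^{-1}(I(d))$, and openness of $\Irr(d)$ goes by the same compactness argument. Your freeness of $\PU(d)$ via Schur is cleaner than the paper's eigenspace argument, and the quotient manifold theorem gives the same dimension count.

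The gap is the step you flagged, and the mechanism you propose for it does not work: the relations do not make $T^*T$ an intertwiner. From $TA_\lambda=B_\lambda T$ and $\sum_\lambda B_\lambda^*B_\lambda=1$ you only get
\[
\sum_{\lambda\in\Lambda^1}A_\lambda^*\,(T^*T)\,A_\lambda=T^*T.
\]
That is, $T^*T$ is a fixed point of the unital completely positive map $\Phi_A(X)=\sum_\lambda A_\lambda^*XA_\lambda$, and such fixed points need not commute with the $A_\lambda$. Here is an example with one vertex and two loops. Take $H=\operatorname{span}\{e_0,e_1,f\}$ orthonormal, $A_1=\tfrac{1}{\sqrt2}(e_1(e_0+e_1)^*+ff^*)$ and $A_2=\tfrac{1}{\sqrt2}(e_1(e_1-e_0)^*+ff^*)$. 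Then $X=2(e_0e_0^*+e_1e_1^*)+e_1f^*+fe_1^*+2ff^*$ is a positive invertible fixed point with $XA_1\neq A_1X$. Setting $T=X^{1/2}$ and $B_\lambda=TA_\lambda T^{-1}$ gives a module $B$ and an invertible intertwiner $T$ with $T^*T=X$. So Schur's lemma for endomorphisms cannot be invoked here; irreducibility has to enter in a different way.

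There are two ways to close the gap.

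\emph{Within your route}, use a Perron--Frobenius argument. Since $T$ maps $H_v$ onto $K_v$, $T^*T$ is block diagonal. Put $c=\min\operatorname{spec}(T^*T)>0$ and $Y=T^*T-c\cdot 1\geq 0$. Because $\Phi_A(1)=1$, you get $\Phi_A(Y)=Y$. Compressing by the projection $Q$ onto $\ker Y\neq\{0\}$ yields $\sum_\lambda(Y^{1/2}A_\lambda Q)^*(Y^{1/2}A_\lambda Q)=QYQ=0$. Hence $\ker Y$ is a nonzero submodule, so $\ker Y=H$ by irreducibility. Therefore $T^*T=c\cdot 1$ and $c^{-1/2}T$ is unitary.

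\emph{Alternatively}, do as the paper does via Proposition~\ref{prop:decomposition}(5), and never pass to isomorphism classes in $\Mod(\cAL)$, whose arrows are merely bounded. Two points of $\Irr(d)$ with the same image in $\Spec(d)$ come with a \emph{unitary} $\theta\colon\Pi(H)\to\Pi(K)$ intertwining $\cOL$. By Proposition~\ref{prop:decomposition}(3)--(4) applied to $\theta$ and $\theta^{-1}=\theta^*$, $\theta$ maps the smallest complete submodule $J_H(H)$ onto $J_K(K)$. So $J_K^*\theta J_H$ is already a unitary module isomorphism $H\to K$.
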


Recall again that for most graphs $\Lambda$ the Cuntz--Krieger algebra is non-type $I$ and thus has a large and pathological spectrum by Glimm \cite{Glimm61}.
So it is a pleasant surprise to find harmony in $\Spec(d)$ when $d$ is finite.
However, $\Spec(d)$ remains mysterious to us when $\sum_{v\in\Lambda^0} dv=\infty$.

We illustrate these results on various graph C$^*$-algebras in Section \ref{sec:example}.
In particular, we consider two graphs $\Lambda,\Lambda_2$ whose associated \CKA s are both isomorphic to the Cuntz algebra $\cO_2$ in two generators.
We obtain two families of moduli spaces (one family per graph) and derive a mapping between these two families.
Finally, we explain how to easily construct a large class of representations of Brin's higher dimensional Thompson groups using a $2$-graph.

\section{higher-rank graphs and higher-rank graph \texorpdfstring{C$^*$}{C*}-algebras}

This is a preliminary section in which we define higher-rank graphs and their C$^*$-algebras.
We recommend \cite{Kumjian-Pask00,Sims10} for additional details on $k$-graphs and to the book of Raeburn for the graph case \cite{Raeburn05}.

\subsection{Higher-rank graphs}\label{sec:higher-rank graphs}

%k-graph
Fix a natural number $k\geq 1$ and consider the free Abelian monoid in $k$-generators $\N^k$ (hence we take the convention that $\N$ contains $0$) with standard basis $(e_1,\cdots,e_k)$.
A $k$-graph is a pair $(\Lambda,d)$ where $\Lambda$ is a countable category, and $d:\Lambda\to\N^k$ is a functor called the \emph{degree map} satisfying the \emph{factorisation property}:
for all $\lambda\in \Lambda$ and $m,n\in\N^k$ if $d(\lambda)=m+n$, then there exists a unique pair $(\mu,\nu)$ in $\Lambda$ satisfying $\lambda=\mu\circ \nu$ and $d(\mu)=m,d(\nu)=n$.

The category $\Lambda$ with objects, arrows and composition of arrows $\circ$ will often be identified with, and treated as,
a set whose elements are the arrows, equipped with a partially defined binary operation (the composition $\circ$).
Moreover, we identify objects $v$ of $\Lambda$ with their identity arrow $\id_v$ forming the subset $\Lambda^0$.
Finally, we will write $r,s$ for the range and source maps; so for $\lambda \in \Lambda$, we have
$r(\lambda), s(\lambda) \in \Lambda^0$, and $r(\lambda)\lambda = \lambda = \lambda s(\lambda)$.

Elements of $\Lambda$ will be called paths and elements of $\Lambda^0$ vertices. The factorisation property
implies that $\Lambda$ is both left- and right-cancellative, and then also that $\Lambda^0 = d^{-1}(0)$. By
extension of this notation, we write $\Lambda^n := d^{-1}(n)$. For $1 \le i \le k$, we call elements of
$\Lambda^{e_i}$ \emph{edges of colour $i$.} We compose from right to left so that
$$r(\lambda\circ \mu)=r(\lambda),\ s(\lambda\circ\mu)=s(\mu)
\text{ and } r(\lambda)\circ \lambda\circ s(\lambda) =
\lambda.$$
We may suppress the symbols $\circ$ and parentheses. Given $\lambda \in \Lambda$ and $S\subset \Lambda$, we write
$\lambda S = \{\lambda \mu : \mu \in S\text{ and }r(\mu) = s(\lambda)\}$, $S\lambda$ is defined analogously. So for $v \in \Lambda^0$,
\[
    v\Lambda^m = \{\lambda \in \Lambda : d(\lambda) = m\text{ and }r(\lambda) = v\}.
\]
In particular, $v\Lambda^{e_i}$ is the set of all edges of colour $i$ with range $v$.
We write $\Lambda^1$ for the collection of all edges; that is, $\Lambda^1:=\cup_{1\leq i\leq k} \Lambda^{e_i}.$

We give $\Lambda$ the left algebraic order: $\lambda\leq \mu$ if $\mu \in \lambda\Lambda$. 

\begin{example}
If $k=1$, then a $k$-graph is the path-space of a countable directed graph. The degree map is then valued in $\N$ and is the usual length.
\end{example}

%Row-finite and locally convex
We will need the following two properties for our $k$-graphs.
\begin{definition}
A $k$-graph $(\Lambda,d)$ is
\begin{itemize}
\item \emph{row-finite} if for all $v\in\Lambda^0$ there are finitely many edges with range $v$; that is, $|v\Lambda^1|<\infty$;
\item \emph{locally convex} if, given $1 \le i < j \le k$, $v\in\Lambda^0$ and $\mu \in v\Lambda^{e_i}$ and $\nu \in v\Lambda^{e_j}$, both $s(\mu)\Lambda^{e_j}$ and $s(\nu)\Lambda^{e_i}$ are nonempty.
\end{itemize}
\end{definition}

\begin{center}\textbf{From now on $\Lambda$ denotes a row-finite and locally convex $k$-graph equipped with a degree map $d$.}
\end{center}

\begin{remark}
\begin{enumerate}
	\item \emph{Row-finiteness} ensures that the sums appearing in the presentations of the key C$^*$-algebras $\cPL$ and $\cOL$ of this paper (see Definitions \ref{def:P} and \ref{def:O}) are finite.
	\emph{Local convexity} ensures that the universal presentation of $\cOL$ is \emph{non-degenerate} (see \cite[Theorem 3.15]{Raeburn-Sims-Yeend03}).
	Local convexity also guarantees existence of \emph{common right-multiples} for trees associated to $\Lambda$ (see Lemma \ref{lem:complete-tree}).
	This property allows us to construct the direct limits of Hilbert spaces that we use to describe the representations of $\cOL$.
	\item By \cite[Theorem~2.1]{Farthing08}, given any row-finite locally convex $k$-graph $\Lambda$, there is a row-finite $k$-graph $\overline{\Lambda}$ with no sources such that $C^*(\Lambda)$ embeds as a full corner of $C^*(\overline{\Lambda})$. In particular,
	$C^*(\Lambda)$ and $C^*(\overline{\Lambda})$ have the same representation theory. So we could in principle restrict attention to $k$-graphs with no sources and still capture the representation theory of arbitrary row-finite locally convex
	$k$-graphs. In practice the passage from $\Lambda$ to $\overline{\Lambda}$ is complex, and it can be difficult to use it to translate results, so we have chosen to deal directly with $k$-graphs that are locally convex but admit sources.
\end{enumerate}
\end{remark}

\subsection{Higher-rank graph \texorpdfstring{C$^*$}{C*}-algebras and their representations}\label{sec:graph-algebras}

%C$^*$-algebra
\begin{definition}\label{def:O}
Let $\Lambda$ be a row-finite $k$-graph with no sources. The graph C$^*$-algebra $\cOL$ of $\Lambda$ is the universal C$^*$-algebra generated by $\{x_\mu: \mu\in\Lambda\}$ subject to the relations
\begin{enumerate}
\item $\{x_v:\ v\in\Lambda^0\}$ are mutually orthogonal projections;
\item $x^*_\lambda x_\lambda=x_{s(\lambda)}$ (partial isometry condition);
\item $x_v=\sum_{\lambda \in v\Lambda^{e_i}} x_\lambda x_\lambda^*$ for each $v\in \Lambda^0$ and $1\leq i\leq k$ with $v\Lambda^{e_i} \not= \emptyset$;
\item $x_\lambda x_\mu = x_{\lambda\circ\mu}$ when composable (we keep the same order);
\end{enumerate}
\end{definition}
When $k=1$, then we recover the row-finite graph C$^*$-algebras (and thus Cuntz--Krieger algebras) and when $k=1$ and there is only one vertex we recover the Cuntz algebras $\cO_n$.

%Reps
By ``universal C$^*$-algebra,'' we mean that given a Hilbert space $H$ and bounded linear operators $X_\lambda\in B(H)$ indexed by $\Lambda$ satisfying conditions (1)--(4), the map $x_\lambda\mapsto X_\lambda$ uniquely extends to a $^*$-homomorphism $\pi_X
: \cOL\to B(H)$. Conversely, given any representation $\pi$ of $\cOL$, defining $X_\lambda := \pi(x_\lambda)$ yields a family of operators $X_\lambda$ such that $\pi = \pi_X$. In view of this, we sometimes refer to a map $X : \lambda \mapsto X_\lambda$
from $\Lambda$ to $B(H)$ such that the $X_\lambda$ satisfy (1)--(4) as a \emph{representation of $\Lambda$}.

In the sequel we will only consider representations of $\Lambda$ that are nondegenerate in the following sense: the net $\big(\sum_{v \in F} X_v\big)_{F \Subset \Lambda^0}$ (where $\Subset$ denotes a finite subset) converges in the strong operator topology
to $1_H$.
Such representations form a category $\Rep(\cOL)$: an arrow from $(\pi^H,H)$ to $(\pi^K,K)$ is a bounded linear map $\phi:H\to K$ intertwining the actions as follows
$$\phi\circ \pi^H(a)=\pi^K(a)\circ \phi \text{ for all } a\in\cOL.$$

Unlike, for example, the theory of groups, not every system of generators and relations defines a universal C$^*$-algebra: one must check that for any finite formal linear combination of words in the generators and their adjoints, the set of norms of its
images in representations of the given relations is bounded. For the above relations, (1) ensures that each $\|x_v\| \in \{0,1\}$ and then~(2) and the $C^*$-identity ensure that each $\|x_\lambda\| \in \{0,1\}$, so submultiplicativity and subadditivity of
the norm on $B(H)$ guarantees the desired uniform bound (see, for example \cite[Section II.8.3]{Blackadar06}). To see that all the generators of the universal C$^*$-algebra are nonzero, it suffices to construct a representation in which they are nonzero;
such a representation is constructed in the proof of \cite[Theorem~3.15]{Raeburn-Sims-Yeend03}.

Relation (4) of Definition \ref{def:O} implies that $\cOL$ is generated by the $x_\lambda$ with $\lambda$ running over vertices and edges (rather than all paths).
We deduce the following presentation.

\begin{proposition}\label{prp:edge presentation}
The graph C$^*$-algebra $\cOL$ is universal for generators $\{x_v:\ v\in \Lambda^0\}\cup\{x_\lambda:\ \lambda\in\Lambda^1\}$ subject to the relations
\begin{enumerate}
\item $\{x_v:\ v\in\Lambda^0\}$ are mutually orthogonal projections ;
\item $x^*_\lambda x_\lambda=x_{s(\lambda)}$ for all $\lambda\in\Lambda^1$;
\item $x_v=\sum_{\lambda \in v\Lambda^{e_i}} x_\lambda x_\lambda^*$ for each $v\in \Lambda^0$ and $1\leq i\leq k$;
\item $x_\lambda x_\mu = x_{\mu'} x_{\lambda'}$ for all $\lambda, \lambda', \mu, \mu' \in \Lambda^1$ satisfying $\lambda\mu = \mu'\lambda'$.
\end{enumerate}
\end{proposition}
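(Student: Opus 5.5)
The plan is to show that the two universal C$^*$-algebras coincide, i.e.\ that the families of operators satisfying one set of relations correspond bijectively to those satisfying the other. Let $B$ denote the universal C$^*$-algebra for the edge presentation; it exists because the norm-boundedness argument given above for $\cOL$ applies verbatim. Let $y_v, y_\lambda$ be its generators. I will construct mutually inverse $^*$-homomorphisms $\cOL\to B$ and $B\to\cOL$ that fix generators.

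\textbf{From $\cOL$ to $B$.} This is the easy direction: the elements $x_v$ and $x_\lambda$, for $v\in\Lambda^0$ and $\lambda\in\Lambda^1$, of $\cOL$ satisfy (1)--(4) of the proposition. Relations (1)--(3) are special cases of Definition~\ref{def:O}. For (4), if $\lambda\mu=\mu'\lambda'$ then Definition~\ref{def:O}(4) gives $x_\lambda x_\mu=x_{\lambda\mu}=x_{\mu'\lambda'}=x_{\mu'}x_{\lambda'}$. Universality of $B$ therefore gives $\psi:B\to\cOL$ with $y_\lambda\mapsto x_\lambda$.

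\textbf{From $B$ to $\cOL$.} This is the main step. For each path $\mu\in\Lambda$ I need to define $y_\mu\in B$. Write $d(\mu)=n$ and pick any factorisation $\mu=\lambda_1\cdots\lambda_{|n|}$ into edges, using the factorisation property. Set $y_\mu:=y_{\lambda_1}\cdots y_{\lambda_{|n|}}$, and set $y_v$ to be the given projection when $|n|=0$.

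The obstacle is well-definedness. By the factorisation property, the edge factorisations of $\mu$ are in bijection with the orderings of the colour multiset of $n$, that is, with words in $e_1,\dots,e_k$ having $n_i$ letters $e_i$. Any two such words are connected by adjacent transpositions. Swapping adjacent colours at positions $j,j+1$ changes the factorisation only in the length-two subpath $\lambda_j\lambda_{j+1}$, which gets replaced by its unique refactorisation $\mu'\lambda'$ with the colours in the opposite order. Relation (4) gives $y_{\lambda_j}y_{\lambda_{j+1}}=y_{\mu'}y_{\lambda'}$, so the product is unchanged. Hence $y_\mu$ is well defined.

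Next I verify Definition~\ref{def:O} for the $y_\mu$.
\begin{itemize}
\item Relation (4) holds because concatenating edge factorisations of $\mu$ and $\nu$ yields one of $\mu\nu$.
\item Relation (1) is immediate.
\item Relation (3) is only needed for edges $\lambda\in v\Lambda^{e_i}$, which is exactly (3) of the proposition. The edge version is also stated for all $i$; when $v\Lambda^{e_i}=\emptyset$ the sum is empty, which I interpret per the paper's no-sources convention, and in Definition~\ref{def:O} the relation is imposed only when $v\Lambda^{e_i}\neq\emptyset$.
\item For relation (2), I induct on length. Writing $\mu=\lambda\nu$ with $\lambda$ an edge,
\[
y_\mu^*y_\mu=y_\nu^*y_\lambda^*y_\lambda y_\nu=y_\nu^*y_{s(\lambda)}y_\nu=y_\nu^*y_{r(\nu)}y_\nu .
\]
Since $y_\nu=y_{r(\nu)}y_\nu$, this equals $y_\nu^*y_\nu=y_{s(\nu)}=y_{s(\mu)}$. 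The identity $y_\nu=y_{r(\nu)}y_\nu$ follows for an edge from (2) and (3) as usual: $y_\nu y_\nu^*\le y_{r(\nu)}$.
\end{itemize}

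Universality of $\cOL$ then gives $\phi:\cOL\to B$ with $x_\mu\mapsto y_\mu$.

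\textbf{Conclusion.} The maps $\phi\circ\psi$ and $\psi\circ\phi$ fix generating sets; this uses that $x_\mu=x_{\lambda_1}\cdots x_{\lambda_m}$ in $\cOL$. Hence both composites are the identity, and $\cOL\cong B$ as claimed.
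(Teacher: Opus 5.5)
Your proof is correct, but it is organised differently from the paper's. The paper does not construct the isomorphism by hand. It views $\cOL$ as the quotient of $\cPL$ by the ideal $I$ generated by the elements $a_{s(\lambda)}-a_\lambda a_\lambda^*$ for all $\lambda\in\Lambda$. By the edge presentation of $\cPL$ (Proposition~\ref{prp:Pythagorean edge presentation}), the algebra in the statement is the quotient by the ideal $I_0$ generated by the same elements for $\lambda\in\Lambda^1$ only. An induction on $\|d(\lambda)\|_1$, the same computation as your induction for the partial-isometry relation, then gives $I\subset I_0$, so the two quotients coincide.

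In the paper, your adjacent-transposition argument for well-definedness of $y_\mu$ is outsourced to Proposition~\ref{prp:Pythagorean edge presentation}, where it is only asserted. That proposition also appeals to \cite[Proposition~3.11]{Raeburn-Sims-Yeend03} to obtain the Cuntz--Krieger relations over all $v\Lambda^{\le m}$, which are built into the definition of $\cPL$. Your direct route avoids $\cPL$ entirely and needs no Cuntz--Krieger relations beyond the edge ones, since Definition~\ref{def:O}(3) only involves edges. It also spells out the factorisation-independence step in full. The paper's route is shorter on the page and matches its later use of $\cOL$ as a quotient of $\cPL$, but it leans on more external input.

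One small point to tidy: your concatenation argument for Definition~\ref{def:O}(4) only covers paths of positive length, because $y_v$ is not an empty product. The cases with a vertex factor, $y_{r(\mu)}y_\mu=y_\mu$ and $y_\mu y_{s(\mu)}=y_\mu$, still need a line. They follow from the identity $y_{r(\nu)}y_\nu=y_\nu$ that you establish for relation (2), and from $y_\mu$ being a partial isometry.
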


This results is folklore. We will provide a proof, but defer it until after Proposition~\ref{prp:Pythagorean edge presentation}.

Observe that if $k=1$, then relation~(4) Proposition~\ref{prp:edge presentation} is trivial, so $\cOL$ is universal for generators $\{x_v, x_\lambda\}$ satisfying (1)--(3).

\begin{remark}A representation of $\cOL$ consists of a Hilbert space $H$ with a decomposition $\oplus_{v\in\Lambda^0}H_v$ over the vertices, and operators $X_\lambda$ with $\lambda\in\Lambda^1$ that are partial isometries such that each $x_\lambda$ has
domain $H_{s(\lambda)}$, codomain contained in $H_{r(\lambda)}$, and such that the $x_\lambda$ collectively satisfy axioms (3,4).
Hence, we may think of the $X_\lambda$ as operators $H_{s(\lambda)}\to H_{r(\lambda)}$ so that representations of $\cOL$ corresponds to a certain class of functors from $\Lambda$ to the category of Hilbert spaces. We will often identify these two
descriptions.
\end{remark}

\section{New algebras associated to (higher-rank) graphs}

We define the Pythagorean C$^*$-algebra $\cPL$ associated to a higher-rank graph $\Lambda$ generalising the construction appearing in \cite{Brothier-Jones19b}.
Then we define the non-self-adjoint algebra $\cAL$ that is generated by the standard set of generators of $\cPL$.

\subsection{Pythagorean type \texorpdfstring{C$^*$}{C*}-algebras and their representations}
We define a new algebra $\cPL$, universal for generators indexed by $\Lambda$ satisfying relations (1),(3)~and~(4) of Proposition~\ref{prp:edge presentation}, but not necessarily the partial-isometry relation~(2).
Under our conventions, the generators $a_\lambda$ of $\cPL$ correspond to the \emph{adjoints} of the generators $x_\lambda$ of $\cOL$.

\begin{definition}\label{def:P}
Define $\cPL$ to be the universal C$^*$-algebra with generator set $\{a_\lambda:\ \lambda\in\Lambda\}$ satisfying the relations
\begin{enumerate}
\item $\{a_v:\ v\in \Lambda^0\}$ are mutually orthogonal projection;
\item $\sum_{\lambda\in v\Lambda^{\le m}}a_\lambda^*a_\lambda=a_v$ for all $v\in \Lambda^0$ and $m \in \N^k$;
\item $a_\mu a_\nu=a_{\nu\circ\mu}$ for all composable $\lambda,\mu\in\Lambda$ (order reversed).
\end{enumerate}
\end{definition}

Again, relation~(1) implies that each $\|a_v\| \in \{0,1\}$, so~(2) implies that $0 \le \|a_\lambda\|^2 \le 1$ for $\lambda \in \Lambda^1$,
and \cite[Section II.8.3]{Blackadar06} shows that there is a universal C$^*$-algebra $\cPL$ as claimed. The universal property ensures that
there is a homomorphism $\cPL \to \cOL$ such that $a_\lambda \mapsto x_\lambda^*$ for all $\lambda$, and since the $x_\lambda$ are all nonzero, we
deduce that the $a_\lambda$ are nonzero as well.

The universal C$^*$-algebra $\cP$ with two generators $a,b$ and one relation
\begin{equation}\label{eq:pythagorean}a^*a+b^*b=1\end{equation}
was introduced in \cite{Brothier-Jones19b}. This is precisely $\cPL$ when $\Lambda$ is the $1$-graph with one vertex and two loops.
Jones named this the \emph{Pythagorean algebra} since~\eqref{eq:pythagorean} is a noncommutative analogue
of the Pythagorean equality. This C$^*$-algebra was designed primarily for the purpose of
constructing unitary representations of Richard Thompson's group $F$. Following Jones we call $\cPL$ the \emph{Pythagorean} algebra of $\Lambda$.

A representation on $H$ of $\cPL$ consists of a map $A : \Lambda \to B(H)$, $\lambda \mapsto A_\lambda$ satisfying the axioms (1)--(3) of Definition~\ref{def:P}.
We write $H_v$ for the range of the projection $A_v$ when $v$ is a vertex of $\Lambda$. As we did for $\cOL$, we will restrict attention to representations of $\cPL$
that are nondegenerate in the sense that $\big(\sum_{v \in F} A_v\big)_{F \Subset \Lambda^0}$ strong-operator converges to $1_H$. If $v\neq w$, then $H_v \perp H_w$, so then $H = \bigoplus_{v\in \Lambda^0} H_v$.
For $\lambda \in \Lambda$, $A_\lambda(H_{r(\lambda)})\subset H_{s(\lambda)}$ (we reverse the direction of the arrow because of relation~(2)). For $\xi \in H_{r(\lambda)}^\perp$,
\[
\langle A_\lambda A^*_\lambda \xi, \xi \rangle \le \langle A_{s(\lambda)} \xi, \xi \rangle = 0,
\]
so $H_{r(\lambda)}^\perp \subset \ker(A_{\lambda})$. Hence, we can interpret $A_\lambda$ as map from $H_{r(\lambda)}$ to $H_{s(\lambda)}$. That is, we can regard a representation
$A$ of $\cPL$ as a contravariant functor from $\Lambda$ to the category $\Hilb$ of Hilbert spaces with bounded linear maps as arrows.

As before, the nondegenerate representations of $\cPL$ are the objects of a category $\Rep(\cPL)$ whose arrows are the bounded linear maps intertwining the actions.

\begin{remark} The canonical quotient map
$$q:\cPL\onto\cOL, \ a_\lambda\mapsto x_\lambda^*$$
determines a forgetful functor
$$U:\Rep(\cOL)\to\Rep(\cPL),\ \pi\mapsto \pi\circ q$$
between representation categories.
\end{remark}

There is a more condensed presentation of $\cPL$ where the generator set consists of vertices and edges rather than all paths of $\Lambda$.

\begin{proposition}\label{prp:Pythagorean edge presentation}
The C$^*$-algebra $\cPL$ admits the presentation with generators the $a_\lambda$ with $\lambda\in\Lambda^0\cup\Lambda^1$ and the relations:
\begin{enumerate}
\item $\{a_v:\ v\in \Lambda^0\}$ are mutually orthogonal projection;
\item $\sum_{\lambda\in v\Lambda^{e_j}}a_\lambda^*a_\lambda=a_v$ for all $v\in \Lambda^0, 1\leq j\leq k$ such that $v\Lambda^{e_j} \not= \emptyset$.
\item $a_\lambda a_\mu = a_{\mu'} a_{\lambda'}$ for all $\lambda, \lambda', \mu, \mu' \in \Lambda^1$ satisfying $\mu\lambda = \lambda'\mu'$.
\end{enumerate}
\end{proposition}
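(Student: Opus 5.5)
Let $\mathcal B$ be the universal C$^*$-algebra with generators $b_\lambda$, $\lambda\in\Lambda^0\cup\Lambda^1$, subject to (1)--(3) of the statement. The plan is to construct mutually inverse homomorphisms between $\mathcal B$ and $\cPL$. As for $\cPL$, relations (1) and (2) bound the norms of the generators, so $\mathcal B$ exists.

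\textbf{From $\mathcal B$ to $\cPL$.} We check that the elements $a_\lambda\in\cPL$ with $\lambda\in\Lambda^0\cup\Lambda^1$ satisfy (1)--(3).
\begin{itemize}
\item Relation (1) is relation (1) of Definition~\ref{def:P}.
\item Relation (3) follows from Definition~\ref{def:P}(3): $a_\lambda a_\mu=a_{\mu\lambda}=a_{\lambda'\mu'}=a_{\mu'}a_{\lambda'}$.
\item For relation (2), take $m=e_j$ in Definition~\ref{def:P}(2). Then $v\Lambda^{\le e_j}$ is $v\Lambda^{e_j}$ when this set is nonempty, and $\{v\}$ otherwise. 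So the relation reads $\sum_{\lambda\in v\Lambda^{e_j}}a_\lambda^*a_\lambda=a_v$ exactly when $v\Lambda^{e_j}\neq\emptyset$.
\end{itemize}
Universality then gives a homomorphism $\mathcal B\to\cPL$ with $b_\lambda\mapsto a_\lambda$.

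\textbf{From $\cPL$ to $\mathcal B$.} For a general path $\lambda$, use the factorisation property to write $\lambda=\lambda_1\cdots\lambda_n$ as a product of edges. Define
\[
b_\lambda:=b_{\lambda_n}\cdots b_{\lambda_1}
\]
in the reversed order, and set $b_v$ for $\lambda=v$.

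The first task is to show $b_\lambda$ does not depend on the factorisation. Any two edge factorisations of $\lambda$ are linked by a sequence of swaps of adjacent bi-coloured pairs: this is the standard fact that factorisations correspond to lattice paths from $0$ to $d(\lambda)$ in $\N^k$, and that any two such paths differ by elementary square moves. Each square move is exactly an instance of relation (3).

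With well-definedness in hand, relation (3) of Definition~\ref{def:P} follows by concatenating factorisations. Relation (1) is immediate.

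\textbf{The main obstacle: Definition~\ref{def:P}(2) for all $m\in\N^k$.} We argue by induction on $|m|$, using local convexity. Recall that $v\Lambda^{\le m}$ is the set of paths $\lambda$ with $d(\lambda)\le m$ and $s(\lambda)\Lambda^{e_i}=\emptyset$ whenever $d(\lambda)_i<m_i$.

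For the inductive step, pick $i$ with $m_i>0$ and write $m=m'+e_i$. Compare $v\Lambda^{\le m}$ with $v\Lambda^{\le m'}$.
\begin{itemize}
\item Each $\mu\in v\Lambda^{\le m'}$ with $d(\mu)_i=m'_i$ and $s(\mu)\Lambda^{e_i}\neq\emptyset$ is replaced by its extensions $\mu e$ with $e\in s(\mu)\Lambda^{e_i}$.
\item The remaining elements of $v\Lambda^{\le m'}$ stay in $v\Lambda^{\le m}$.
\end{itemize}
This is the lemma I would invoke, in the form of \cite[Lemma 3.12]{Raeburn-Sims-Yeend03} or a direct reproof of it via local convexity.

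Granting that description, the relation for $m$ follows from the relation for $m'$. In the sum $\sum_{\lambda\in v\Lambda^{\le m}}b_\lambda^*b_\lambda$, group the extended paths by their prefix $\mu$ and compute
\[
\sum_{e} b_{\mu e}^*b_{\mu e}=b_\mu^*\Big(\sum_{e\in s(\mu)\Lambda^{e_i}}b_e^*b_e\Big)b_\mu=b_\mu^*\,b_{s(\mu)}\,b_\mu=b_\mu^*b_\mu .
\]
Here the middle equality is relation (2), and the last uses $b_{s(\mu)}b_\mu=b_\mu$. This last identity holds because $b_\mu$ is a product of edge generators, and $b_{s(e)}b_e=b_e$ follows from relation (2) together with orthogonality of the vertex projections.

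The delicate point is the combinatorial description of $v\Lambda^{\le m}$ in terms of $v\Lambda^{\le m'}$. There one must use local convexity to show that paths which could not be extended in direction $e_i$ remain maximal after the extension. I expect this to require the most care.

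Universality then gives a homomorphism $\cPL\to\mathcal B$ with $a_\lambda\mapsto b_\lambda$. Both composites fix generators, so the two homomorphisms are mutually inverse isomorphisms.
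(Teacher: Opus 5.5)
Your route is the paper's. You map the vertex and edge generators of $\cPL$ into the universal algebra $\mathcal B$ of the edge relations and back, and define $b_\lambda$ as a reversed product along an edge factorisation. You get independence of the factorisation from~(3) via square moves. You prove Definition~\ref{def:P}(2) by induction on $m$ using $\Lambda^{\le m'}\Lambda^{\le e_i}=\Lambda^{\le m}$; the paper just cites Raeburn--Sims--Yeend for this step, and your unpacked induction is correct.

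There is, however, a genuine gap at the identity $b_{s(\mu)}b_\mu=b_\mu$. You need it in the inductive step, to pass from $b_\mu^*b_{s(\mu)}b_\mu$ to $b_\mu^*b_\mu$. You also need it for the vertex instances $a_{s(\lambda)}a_\lambda=a_\lambda=a_\lambda a_{r(\lambda)}$ of Definition~\ref{def:P}(3), which ``concatenating factorisations'' does not cover. Your justification does not work. Relation~(2) gives $b_e^*b_e\le b_{r(e)}$, hence $b_eb_{r(e)}=b_e$. That controls the initial projection of $b_e$, not where $b_e$ lands, and orthogonality of the vertex projections adds nothing.

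In fact $b_{s(e)}b_e=b_e$ is not a consequence of (1)--(3) at all. Take the $1$-graph with vertices $v,w$, an edge $e$ with $r(e)=v$ and $s(e)=w$, and a loop $f$ at $w$. On $\C$ put $b_v=1$, $b_w=0$, $b_e=1$, $b_f=0$. Relations (1)--(3) hold, and the representation is nondegenerate since $b_v+b_w=1$. But $b_wb_e=0\neq b_e$, and since $v\Lambda^{\le 2}=\{ef\}$,
\[
\sum_{\lambda\in v\Lambda^{\le 2}}b_\lambda^*b_\lambda=b_e^*b_f^*b_fb_e=0\neq b_v .
\]
So there is no homomorphism $\cPL\to\mathcal B$ with $a_\lambda\mapsto b_\lambda$. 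The presentation as stated is incomplete: one must add the relation $a_{s(\lambda)}a_\lambda=a_\lambda$ for $\lambda\in\Lambda^1$.

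The paper's proof has the same lacuna. It declares Definition~\ref{def:P}(1) and (3) ``clear''. Its Raeburn--Sims--Yeend citation silently imports the relation $t_\lambda t_\mu=t_{\lambda\mu}$ with vertices allowed. Elsewhere the paper treats each $A_\lambda$ as a map $H_{r(\lambda)}\to H_{s(\lambda)}$, which is exactly this relation.

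Once that relation is added, $b_{s(\mu)}b_\mu=b_\mu$ holds for every path, because $s(\mu)=s(\mu_n)$ and $b_\mu=b_{\mu_n}\cdots b_{\mu_1}$. The rest of your argument then goes through unchanged.
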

\begin{proof}
Given a family satisfying (1)--(3) of Definition~\ref{def:P}, the subset $\{a_v:\ v \in  \Lambda^0\} \cup \{a_\lambda:\ \lambda \in \Lambda^1\}$
clearly satisfies (1)--(3) of this proposition, and generates $C^*(\{a_\lambda:\ \lambda \in \Lambda\})$ by Definition~\ref{def:P}(3) and the factorisation property.

Conversely, given a family satisfying (1)--(3) of this proposition, for $\lambda \in \Lambda$ we use the factorisation property repeatedly to write $\lambda = \lambda_1\circ\cdots \circ \lambda_{\|d(\lambda)\|_1}$ with each $\lambda_i \in \Lambda^1$, and
define $a_\lambda := a_{\lambda_{\|d(\lambda)\|_1}}\cdots a_{\lambda_1}$. Condition~(3) ensures that this is independent of the choice of factorisation $\lambda = \lambda_1\circ\cdots \circ \lambda_{\|d(\lambda)\|_1}$. This family $a_\lambda$ clearly
satisfies Definition~\ref{def:P}(1). Given composable $\mu,\nu \in \Lambda$ with $\|d(\mu)\|_1 = m$ and $\|d(\nu)\|_1 = n$, fix factorisations $\mu = \mu_1 \circ \cdots \circ \mu_m$ and $\nu = \nu_1 \circ \cdots \circ \nu_n$ with each $\mu_i,\ni_j \in
\Lambda^1$. Then $\mu\nu = \mu_1 \circ \cdots \circ \mu_m \circ \nu_1 \circ \cdots \circ \nu_n$ is also such a factorisation, so $a_{\mu\nu} = a_{\nu_n} \cdots \circ a_{\nu_1} a_{\mu_m} \cdots a_{\mu_1} = a_\nu a_\mu$, which is Definition~\ref{def:P}(3).
That the $a_\mu$ satisfy Definition~\ref{def:P}(2) follows from the argument of \cite[Proposition~3.11]{Raeburn-Sims-Yeend03} (which does not make use of the Cuntz--Krieger relation~(4) of \cite[Definition~3.3]{Raeburn-Sims-Yeend03}).
\end{proof}

We can now pay the debt incurred at Proposition~\ref{prp:edge presentation}.

\begin{proof}[Proof of Proposition~\ref{prp:edge presentation}]
By definition, $\cOL$ is the quotient of $\cPL$ by the ideal $I$ generated by $\{a_{s(\lambda)} - a_\lambda a^*_\lambda : \lambda \in \Lambda\}$.
By Proposition~\ref{prp:Pythagorean edge presentation}, the C$^*$-algebra of Proposition~\ref{prp:edge presentation} is the quotient of $\cPL$ by
the $I_0$ generated by $\{a_{s(\lambda)} - a_\lambda a^*_\lambda : \lambda \in \Lambda^1\}$. Clearly $I_0 \subset I$. Let $q_0 : \cPL
\to \cPL/I_0$ be the quotient map. Since the $q_0(a_\lambda)$ satisfy Proposition~\ref{prp:edge presentation}(3),
an induction on $\|d(\lambda)\|_1$ shows that they satisfy Definition~\ref{def:O}(3). Hence $a_{s(\lambda)} - a_\lambda a^*_\lambda \in I_0$
for all $\lambda \in \Lambda$. Thus $I \subset I_0$, and the two are equal.
\end{proof}

\subsection{A non-self-adjoint operator algebra}

\begin{definition}\label{def:A}
We define
\[
\cAL := \overline{\operatorname{span}}\{a_\lambda:\ \lambda \in \Lambda\} \subset \cPL.
\]
A $\cAL$-\emph{module} (or just a \emph{module}) is a Hilbert space $H$ together with a continuous homomorphism $\pi:\cAL\to B(H)$ such that for each $v \in \Lambda^0$, the operator $\pi(a_v)$ is a projection, and
\[
    \sum_{\lambda\in v\Lambda^{e_j}} \pi(a_\lambda)^*\pi(a_\lambda)=\pi(a_v)
\]
for all $1\leq j\leq k$ such that $v\Lambda^{e_j} \not= \emptyset$.
\end{definition}

Note that we use the term ``representations'' for continuous linear actions of the C$^*$-algebras $\cOL$ and $\cPL$, while we use the term ``modules'' for continuous linear actions of the non-self-adjoint algebras $\cAL$
by operators satisfying the conditions of Definition~\ref{def:A}. As before, we will restrict attention exclusively to modules $(\pi,H)$ of $\cAL$ that are nondegenerate in the sense that $\big(\sum_{v \in F} \pi(a_v)\big)_{F \Subset \Lambda^0}$ weak-operator converges to $1_H$.
We will often write $A_\lambda$ for $\pi(a_\lambda)$.

\begin{remark} The algebra $\cAL$ is by definition not closed under involution in $\cPL$. Indeed, for $\lambda \in \Lambda$, we have $a_\lambda^* \in \cAL$ if and only
if $\lambda \in \Lambda^0$. To see this, observe that the ``if'' implication is trivial. For the ``only if'' implication, let $\gamma$ be the gauge
action of $\mathbb{T}^k$ on $\cOL$ from \cite[Section~4.1]{Raeburn-Sims-Yeend03}. Then the spectral subspaces
$(\cOL)_n := \{b \in \cOL:\ \gamma_z(b) = z^{-n} b\}$ satisfy $q(\cAL) \subset \bigcup_{n \in \N^k} (\cOL)_{-n}$ where $q:\cPL\onto\cOL$ is the quotient map.
Whereas each $q(a_\lambda^*) \in (\cOL)_{d(\lambda)}$. Since $(\cOL)_m \cap (\cOL)_n = \{0\}$ for $m \neq n$, it follows that $a_\lambda^* \not \in \cAL$
when $d(\lambda) \not= 0$.
\end{remark}

Note that a $\cAL$-module is equivalent to a collection of Hilbert spaces $(H_v)_{v\in\Lambda^0}$ and a family of bounded linear operators $(A_\lambda)_{\lambda\in\Lambda^1}$ such that $A_\lambda : H_{r(\lambda)}\to H_{s(\lambda)}$, $A_{\lambda}A_{\mu} =
A_{\mu'}A_{\lambda'}$ for all $\lambda, \lambda', \mu, \mu' \in \Lambda^1$ satisfying $\mu\lambda = \lambda'\mu'$, and for each vertex $v$ and colour $i$ with $v\Lambda^{e_i}$ non-empty the map
$$H_v\to \oplus_{\lambda\in v\Lambda^{e_i}} H_{s(\lambda)}, \ \xi\mapsto (A_\lambda\xi)_{\lambda\in v\Lambda^{e_i}}$$
is an isometry.

The collection of $\cAL$-modules constitutes the object space of a category $\Mod(\cAL)$ whose arrows $\phi : (\pi, H) \to (\rho, K)$ are bounded linear maps intertwining $\pi$ and $\rho$: $\phi \circ \pi(a_\lambda) = \rho(a_\lambda) \circ \phi$.
If $(\pi, H)$ is a representation of $\cPL$, then restricting the action of $\cPL$ to an action of $\cAL$ makes $H$ into a $\cAL$-module. This provides a forgetful functor
$$\Rep(\cPL)\to\Mod(\cAL).$$
Conversely, any module $(\pi,H)$ extends to a representation of $\cPL$ by the universal property of the latter.

\begin{remark}\label{rmk:wide subcategory}
The category $\Mod(\cAL)$ has the same objects as $\cPL$. But it has more arrows: an arrow $\phi : (\pi, H) \to (\rho, K)$ is required only to be a Banach-space morphism that intertwines the images of generators $a_\lambda$ under $\pi$ with those under $\rho$.
Since $\phi$ is not required to intertwine the adjoints of the $a_\lambda$ it may not define an arrow in $\Rep(\cPL)$.
We will often implicitly consider $\Rep(\cPL)$ as a wide subcategory of $\Mod(\cAL)$ (same objects but fewer arrows) and thus identify $\cAL$-modules with representations of $\cPL$.
\end{remark}

\section{Lifting representations}

This section explains how to dilate an $\cAL$-module (equivalently a representation of $\cPL$) to a representation of $\cOL$.
This is the most important construction of this paper.
We start by defining a family of trees constructed from $\Lambda$.
Then we explicitly construct a Hilbert space $L$ from an $\cAL$-module $H$. For clarity, we provide two equivalent constructions of $L$: one using classes of path decorated by vectors of $H$ and one using classes of trees with leaves decorated by vectors of $H$.
We then define an action of $\cOL$ on $L$, explain how this construction is functorial, and collect some categorical results.

\subsection{Trees associated to a \texorpdfstring{$k$}{k}-graph}

We will formally define the trees of $\Lambda$ as subsets of its path space obtained by an inductive process; then we will explain how we view these subsets as trees.

\begin{definition}
Fix $v \in \Lambda^0$. The \emph{trivial caret} $\vee_v$ at $v$ is the singleton $\vee_v = \{v\}$. The \emph{basic carets} (or simply \emph{carets}) at $v$ are the sets
\[
\vee_{v, i} = v \Lambda^{\le e_i} = \begin{cases} v\Lambda^{e_i} &\text{ if $v\Lambda^{e_i} \not= \emptyset$}\\ \{v\} &\text{ otherwise}\end{cases}
\]
indexed by $1 \le i \le k$. We inductively define the collection of trees at $v$ as follows: the trivial caret $\vee_v$ is a tree at $v$, and given a tree $t$ at $v$, an element $\mu \in t$, and an index $i \in \{1, \dots, k\}$, the set
\[
t *_\mu {\vee_{s(\mu), e_i}} := (t \setminus \{\mu\}) \cup \mu {\vee_{s(\mu),i}}
\]
is a tree at $v$ where recall $\mu\vee_{s(\mu), i} := \{\mu\lambda: \lambda \in \vee_{s(\mu), i}\}$.

We call $v$ the \emph{root} of $t$, and denote it by $r(t)$, and we call the elements of $t$ the \emph{leaves}. The lone vertex $v$ of $\vee_v$ is simultaneously the root and the unique leaf.
\end{definition}

The basic carets are trees since each $\vee_{v, i} =  {\vee_v} *_v {\vee_{v, i}}$.

We think of the trees of $\Lambda$ pictorially as trees of height~1: that is, given a tree $t$, we represent it as a planar diagram by placing a root labelled $v$ at the bottom, and leaves labelled by the elements of $t$ at the top ordered from left to
right in an arbitrary manner. For instance if $v\Lambda^{e_1}=\{a,b,c\}$, then the $(v, 1)$-caret $\vee_{v, 1}$ is represented by:
\begin{center}
	\pgfdeclarelayer{nodelayer}
	\pgfdeclarelayer{edgelayer}
	\pgfsetlayers{nodelayer,edgelayer,main}
	\begin{tikzpicture}[scale = 0.8, decoration={markings, mark=at position 0.5 with {\arrow{stealth}}}]
		\begin{pgfonlayer}{nodelayer}
			\node [] (0) at (0, 0) {};
			\node [] (1) at (-1, 1.5) {};
			\node [] (2) at (0, 1.5) {};
			\node [] (3) at (1, 1.5) {};
			\node [] (4) at (0, -0.3) {$v$};
			\node [] (5) at (1, 1.75) {$c$};
			\node [] (6) at (0, 1.75) {$b$};
			\node [] (7) at (-1, 1.75) {$a$};
		\end{pgfonlayer}
		\begin{pgfonlayer}{edgelayer}
			\draw[postaction={decorate}] (1.center) to (0.center);
			\draw[postaction={decorate}] (2.center) to (0.center);
			\draw[postaction={decorate}] (3.center) to (0.center);
		\end{pgfonlayer}
	\end{tikzpicture}
\end{center}

Given a tree $t$ at $v$, an element $\mu \in t$ and an index $i \in \{1,\dots, k\}$ such that $s(\mu)\Lambda^{e_i} \not= \emptyset$, the diagram for $t'$ is obtained from the diagram for $t$ as follows: first we glue a copy of the diagram for
$\vee_{s(\mu), i}$ to the leaf of the diagram for $t$ labelled $\mu$; and then we delete the internal vertex labelled $\mu$ and relabel each leaf of $\vee_{s(\mu), i}$ as the composition of its existing label with $\mu$. For example, suppose that $t =
\{\lambda, \mu, \nu\}$ and $s(\mu)\Lambda^{e_i} = \{a, b\}$. Let $t' := t *_\mu \vee_{s(\mu), i}$. Gluing $\vee_{s(\mu), i}$ to the leaf $\mu$ of $t$ yields the graph $t^0$ on the left below; deleting the internal vertex $s(\mu)$ and relabelling the leaves
of $\vee_{s(\mu), i}$ then yields the diagram for $t'$ on the right below.
\begin{center}
	\pgfdeclarelayer{nodelayer}
	\pgfdeclarelayer{edgelayer}
	\pgfsetlayers{nodelayer,edgelayer,main}
	\begin{tikzpicture}[scale = 0.6, decoration={markings, mark=at position 0.5 with {\arrow{stealth}}}]
		\begin{pgfonlayer}{nodelayer}
			\node [] (0) at (0, 0) {};
			\node [] (1) at (-2, 2) {};
			\node [] (2) at (0, 2) {};
			\node [] (3) at (2, 2) {};
			\node [anchor=north, inner sep=0pt] (7) at (0.south) {$v$};
			\node [anchor=west, inner sep=0pt] (8) at (2.east) {$\mu$};
			\node [anchor=east, inner sep=-2pt] at (1.west) {$\lambda$};
			\node [anchor=west, inner sep=-2pt] at (3.east) {$\nu$};
			\node [] (9) at (-3.5, 2) {$t^0 = $};
			\node [] (10) at (8.5, 0) {};
			\node [] (11) at (6.5, 2) {};
			\node [] (13) at (10.5, 2) {};
			\node [anchor=east, inner sep=-2pt] at (11.west) {$\lambda$};
			\node [anchor=west, inner sep=-2pt] at (13.east) {$\nu$};
			\node [] (17) at (8.5, -0.5) {$v$};
			\node [] (19) at (4.5, 2) {$t' = $};
			\node [] (20) at (-1, 4) {};
			\node [] (21) at (1, 4) {};
			\node [anchor=east, inner sep=-2pt] at (20.west) {$a$};
			\node [anchor=west, inner sep=-2pt] at (21.east) {$b$};
			\node [] (24) at (7.5, 4) {};
			\node [] (25) at (9.5, 4) {};
			\node [] (28) at (2.75, 2) {$,$};
			\node [anchor=east, inner sep=-2pt] at (24.west) {$\mu a$};
			\node [anchor=west, inner sep=-2pt] at (25.east) {$\mu b$};
		\end{pgfonlayer}
		\begin{pgfonlayer}{edgelayer}
			\draw [postaction={decorate}, blue] (1.center) to (0.center);
			\draw [postaction={decorate}, blue] (2.center) to (0.center);
			\draw [postaction={decorate}, blue] (3.center) to (0.center);
			\draw [postaction={decorate}, blue] (11.center) to (10.center);
			\draw [postaction={decorate}, blue] (13.center) to (10.center);
			\draw [postaction={decorate}, red] (20.center) to (2.center);
			\draw [postaction={decorate}, red] (21.center) to (2.center);
			\draw [postaction={decorate}] (25.center) to (10.center);
			\draw [postaction={decorate}] (24.center) to (10.center);
		\end{pgfonlayer}
	\end{tikzpicture}
\end{center}

We define a partial order on trees: $t \le t'$ if $t' \subset t\Lambda$. Equivalently, $\le$ is the partial order generated by $t \le t *_\mu \vee_{s(\mu), i}$ whenever $\mu \in t$ and $s(\mu)\Lambda^{e_i} \not= \emptyset$.

\begin{example} \label{ex:tree}
A key artifact of our definition of a tree is that the same tree can be constructed by gluing carets of different colours in different orders. This is a consequence of removing interior nodes and labelling edges and vertices by paths in $\Lambda$, which
brings the factorisation rules of $\Lambda$ into play.

For instance, consider the $2$-graph $\Lambda$ where $\Lambda^0 = \{v\}$ and $\Lambda^{e_1} = \{\nu\},\ \Lambda^{e_2} = \{\mu_1, \mu_2\}$ (i.e. the one-skeleton is the bouquet with three loops), with factorisation property $\nu\mu_1 = \mu_2e$, $\nu\mu_2 =
\mu_1\nu$. Let $t_1 = {\vee_{v, 1}} *_{\nu} {\vee_{v, 2}}$ and $t_2 = ({\vee_{v, 2}} *_{\mu_1} {\vee_{v,1}}) *_{\mu_2} {\vee_{v, 1}}$. So $t_1$ is the tree formed by starting with the caret $\vee_{v, 1}$ and gluing $\vee_{v, 2}$ at the unique leaf; and
$t_2$ is the tree formed by starting with $\vee_{v,2}$ and gluing $\vee_{v_1}$ at each leaf (in either order). The factorisation rules in $\Lambda$ ensure that $t_1$ and $t_2$ are equal. This is diagrammatically shown below, where for the two diagrams on
the sides we include the interior nodes to demonstrate the carets used to construct them (hence they are not \emph{trees} in the sense of this article) while the middle diagram is the common tree $t_1 = t_2$.
\begin{center}
	\pgfdeclarelayer{nodelayer}
	\pgfdeclarelayer{edgelayer}
	\pgfsetlayers{nodelayer,edgelayer,main}
	\begin{tikzpicture}[scale=0.7, decoration={markings, mark=at position 0.5 with {\arrow{stealth}}}]
		\begin{pgfonlayer}{nodelayer}
			\node [] (0) at (-0.5, 0) {};
			\node [anchor=north, inner sep=-2pt] at (0.south) {$v$};
			\node [] (1) at (-0.5, 1) {};
			\node [anchor=west, inner sep=-2pt] at (1.east) {$\nu$};
			\node [] (2) at (-1.5, 2) {};
			\node [] (3) at (0.5, 2) {};
			\node [anchor=south, inner sep=-2pt] at (2.north) {$\mu_1$};
			\node [anchor=south, inner sep=-2pt] at (3.north) {$\mu_2$};
			\node [] (4) at (1.25, 1) {$\Rightarrow$};
			\node [] (5) at (7.75, 0) {};
			\node [anchor=north, inner sep=-2pt] at (5.south) {$v$};
			\node [] (6) at (6.75, 1) {};
			\node [] (7) at (8.75, 1) {};
			\node [anchor=east, inner sep=-2pt] at (6.west) {$\mu_1$};
			\node [anchor=west, inner sep=-2pt] at (7.east) {$\mu_2$};
			\node [] (8) at (8.75, 2) {};
			\node [] (9) at (6.75, 2) {};
			\node [anchor=south, inner sep=-2pt] at (8.north) {$\nu$};
			\node [anchor=south, inner sep=-2pt] at (9.north) {$\nu$};
			\node [] (10) at (5.25, 1) {$\Leftarrow$};
			\node [] (11) at (3.25, 0) {};
			\node [anchor=north, inner sep=-2pt] at (11.south) {$v$};
			\node [] (12) at (2.25, 2) {};
			\node [] (13) at (4.25, 2) {};
			\node [anchor=south, inner sep=-2pt] at (12.north) {$\mu_1\nu$};
			\node [anchor=south, inner sep=-2pt] at (13.north) {$\mu_2\nu$};
		\end{pgfonlayer}
		\begin{pgfonlayer}{edgelayer}
			\draw [postaction={decorate}, blue] (1.center) to (0.center);
			\draw [postaction={decorate}, blue] (9.center) to (6.center);
			\draw [postaction={decorate}, blue] (8.center) to (7.center);
			\draw [postaction={decorate}, red] (2.center) to (1.center);
			\draw [postaction={decorate}, red] (3.center) to (1.center);
			\draw [postaction={decorate}, red] (6.center) to (5.center);
			\draw [postaction={decorate}, red] (7.center) to (5.center);
			\draw [postaction={decorate}] (12.center) to (11.center);
			\draw [postaction={decorate}]  (13.center) to (11.center);
		\end{pgfonlayer}
	\end{tikzpicture}
\end{center}
\end{example}

If $m\in\N^k$, then we write $\Lambda^{\leq m}$ for the set of path $\mu$ of degree $m$ or of maximal subdegree (i.e.~$d(\mu)=m$ or $d(\mu)<m$ and if $\mu\circ \nu$ is a strictly larger path, then $d(\mu\circ\nu)$ is not smaller or equal to $m$).
Local convexity implies that
\begin{equation}\label{eq:LambdaLE factorisation}
\Lambda^{\leq m}\cdot \Lambda^{\leq n} = \Lambda^{\leq (m+n)} \text{ for all } m,n\in\N^k
\end{equation}
where $\Lambda^{\leq m}\cdot \Lambda^{\leq n}$ means the set of all products $\mu\circ\nu$ with $\mu\in\Lambda^{\leq m},\nu\in\Lambda^{\leq n}$, see \cite[Lemma 2.2]{Sims10} for a proof.
This implies that the set $v\cT$ admits a cofinal sequence and is thus directed.

\begin{lemma} \label{lem:complete-tree}
For each vertex $v \in \Lambda^0$ and $m \in \N^k$, the set $t_v^m := v\Lambda^{\le m}$ is a tree at $v$.
The collection of all $t_v^m$ with $m\in\N^k$ is a \emph{cofinal} net in the poset $(v\cT,\leq)$: if $m\leq n$ in $\N^k$, then $t_v^m\leq t_v^n$ and for every tree $s\in v\cT$ there exists $m\in \N^k$ such that $t\leq t^m_v$.
In particular, the poset $(v\cT,\leq)$ is \emph{directed}: for all trees $s,s'\in v\cT$ there exists $z\in v\cT$ so that $s\leq z$ and $s'\leq z$.
\end{lemma}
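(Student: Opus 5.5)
We prove the three assertions in turn, using the factorisation identity \eqref{eq:LambdaLE factorisation} throughout. Call a pair $(\mu,\mu\lambda)$ with $\mu\in v\Lambda$ and $\lambda\in s(\mu)\Lambda^{\le e_i}$ a grafting step of colour $i$.

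\textbf{$t_v^m$ is a tree.} We argue by induction on $|m|=\|m\|_1$. For $m=0$ we have $t_v^0=\{v\}=\vee_v$. Suppose $t_v^m$ is a tree, and fix $i$. We show that $t_v^{m+e_i}$ is obtained from $t_v^m$ by grafting the caret $\vee_{s(\mu),i}$ at every leaf $\mu$. By \eqref{eq:LambdaLE factorisation},
\[
v\Lambda^{\le m+e_i}=v\Lambda^{\le m}\cdot\Lambda^{\le e_i}=\bigcup_{\mu\in t_v^m}\mu\,s(\mu)\Lambda^{\le e_i}.
\]
Each grafting at $\mu$ is an operation $t*_\mu\vee_{s(\mu),i}$. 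If $s(\mu)\Lambda^{e_i}=\emptyset$, the caret is $\{s(\mu)\}$ and the operation leaves $t$ unchanged. Grafting at distinct leaves $\mu\neq\mu'$ commutes. The only point to check is that, after grafting at $\mu$, the remaining leaves $\mu'$ of $t_v^m$ are still leaves. This holds because the new elements $\mu\lambda$ are distinct from the old ones: $t_v^m$ consists of pairwise incomparable paths, as is standard for $\Lambda^{\le m}$ by the maximality clause and unique factorisation. Hence we can graft leaf by leaf. The set $t_v^m$ is finite by row-finiteness, so the process terminates after finitely many steps and yields the set displayed above. This completes the induction.

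\textbf{Monotonicity and cofinality.} If $m\le n$, write $n=m+p$. Then \eqref{eq:LambdaLE factorisation} gives $t_v^n=t_v^m\cdot\Lambda^{\le p}\subset t_v^m\Lambda$, which is exactly $t_v^m\le t_v^n$.

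For cofinality we must show that every tree $s\in v\cT$ satisfies $s\le t_v^m$ for some $m$. We induct on the construction of $s$ and maintain the stronger claim: there is an $m$ such that every $\lambda\in t_v^m$ factors as $\lambda=\sigma\beta$ with $\sigma\in s$. The base case $s=\vee_v$ is trivial with $m=0$.

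For the inductive step, let $s'=s*_\mu\vee_{s(\mu),i}$ and suppose the claim holds for $s$ with some $m$. Take $n=m+e_i$ and $\lambda\in t_v^n$. Using \eqref{eq:LambdaLE factorisation} in the order $\Lambda^{\le m}\cdot\Lambda^{\le e_i}$ only shows that $\lambda$ extends some element of $s$, which is not enough. We need finer control when $\lambda$ extends $\mu$ itself, so we use the other order: $v\Lambda^{\le m+e_i}=\Lambda^{\le e_i}\cdot\Lambda^{\le m}$ together with the factorisation property.

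\textbf{Main obstacle.} Suppose $\lambda=\mu\beta$ with $\mu\in s$. We must show that $\beta$ begins with an element of $s(\mu)\Lambda^{\le e_i}$. Since $d(\beta)\le n-d(\mu)$, we would instead pick $n$ large, say $n=m+(|d(\mu)|+1)\mathbf 1$, where $\mathbf 1=(1,\dots,k)$ has all entries $1$. Then $\beta$ has $e_i$-degree at least $1$ unless it is truncated by a source. If $\beta$ is truncated, the caret is trivial and $s'=s$. Otherwise the factorisation property splits off the first colour-$i$ edge of $\beta$.

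The truncated case is where the argument needs care. The key input is local convexity, in the form of \eqref{eq:LambdaLE factorisation} applied as $\Lambda^{\le n}=\Lambda^{\le d(\mu)}\cdot\Lambda^{\le e_i}\cdot\Lambda^{\le n-d(\mu)-e_i}$ at the vertex $s(\mu)$. This decomposition shows that any $\lambda\in t_v^n$ extending $\mu$ lies in $\mu\, s(\mu)\Lambda^{\le e_i}\Lambda$, which is what the inductive step requires.

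\textbf{Directedness.} Given trees $s,s'$, choose $m,m'$ with $s\le t_v^m$ and $s'\le t_v^{m'}$. Then $z=t_v^{m\vee m'}$ dominates both, by transitivity of $\le$ and monotonicity.
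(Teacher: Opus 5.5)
Your proposal is correct. The proof that $t_v^m$ is a tree is the paper's argument: write $v\Lambda^{\le m+e_i}=v\Lambda^{\le m}\Lambda^{\le e_i}$ via \eqref{eq:LambdaLE factorisation} and graft the colour-$i$ caret at each leaf in turn. For monotonicity, cofinality and directedness the paper only says they ``follow easily''. Your induction on the construction of $s$ therefore supplies an argument the paper omits rather than replacing one, and it is sound.

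Two points would tighten it.

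First, you assert without justification that every $\lambda\in t_v^n$ extending $\mu$ lies in $\mu\vee_{s(\mu),i}\Lambda$. This needs the following observation. Write $\lambda=\alpha\gamma\delta$ with $\alpha\in v\Lambda^{\le d(\mu)}$, $\gamma\in s(\alpha)\Lambda^{\le e_i}$ and $\delta\in\Lambda^{\le n-d(\mu)-e_i}$. Then $\alpha=\mu$. Indeed, $\alpha$ and $\mu$ are both initial segments of $\lambda$ with $d(\alpha)\le d(\mu)$, so unique factorisation makes $\mu$ an extension of $\alpha$ of degree $d(\mu)$. The maximality built into $\Lambda^{\le d(\mu)}$ then forces $\alpha=\mu$, hence $\gamma\in\vee_{s(\mu),i}$.

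Second, $n=m+e_i$ already works, and the detour through the order $\Lambda^{\le e_i}\Lambda^{\le m}$ is unnecessary. Write $\lambda=\lambda_0\gamma$ with $\lambda_0\in t_v^m$. Either $\lambda_0$ extends a leaf of $s$ other than $\mu$, and you are done. Or $\lambda_0$ extends $\mu$, in which case $d(\mu)\le m$, so $n\ge d(\mu)+e_i$, which is all the three-fold factorisation requires.

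Two minor remarks: your ``stronger claim'' is simply the statement $s\le t_v^m$, and $\mathbf 1$ should be $(1,\dots,1)$.
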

\begin{proof}
We proceed by induction on the length of $m$. The base case when the length is $0$ is trivial. Now suppose the above statement is true for all degrees with length $\ell$ or less. Let $m$ be a degree with length $\ell+1$. There exists $i$ such that the
$i$th component of $m$ is nonzero. Let $n = m - e_i \in \N^k$. By local convexity we have $v\Lambda^{\leq m} = v\Lambda^{\leq n} \cdot \Lambda^{\leq e_i}$. Let $p = |\{\mu \in v\Lambda^{\leq n} : s(\mu)\Lambda^{e_i} \neq \emptyset\}|$, and fix an
enumeration $\{\mu \in v\Lambda^{\leq n} : s(\mu)\Lambda^{e_i} \neq \emptyset\} = \{\mu_1, \mu_2, \dots, \mu_p\}$. Then
\[
v \Lambda^{\leq n} \cdot \Lambda^{\leq e_i}
    = ((v \Lambda^{\leq n} *_{\mu_1} \Lambda^{e_i}) *_{\mu_2} \Lambda^{e_i}) \cdots *_{\mu_p} \Lambda^{e_i},
\]
so the induction assumption proves the first statement. The rest of the lemma follows easily.	
\end{proof}

\subsection{Construction of a Hilbert space}\label{sec:lifting-representation}
Fix an $\cAL$-module $H$ with associated operators $A_\lambda,\lambda\in\Lambda$.
We decompose $H$ as the Hilbert direct sum of the $H_v,v\in\Lambda^0$, and denote the inner-product on $H$ by $\langle \cdot, \cdot\rangle_H$.

\subsubsection{The first construction of \texorpdfstring{$L$}{L}.} \label{subsec:first-constr-Hilb}

\begin{lemma}\label{lem:isometries}
Let $v \in \Lambda^0$ and $m \in \N^k$. The operator $$A_{v\Lambda^{\le m}} := (A_\lambda)_{\lambda \in v\Lambda^{\le m}} : H_v \to \bigoplus_{\lambda \in \Lambda^{\le m}} H_{s(\lambda)}$$  is an isometry.
\end{lemma}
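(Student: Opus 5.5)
The claim is equivalent to the identity $\sum_{\lambda\in v\Lambda^{\le m}} A_\lambda^*A_\lambda = A_v$ on $H_v$, since $\|(A_\lambda\xi)_\lambda\|^2=\langle \sum_\lambda A_\lambda^*A_\lambda\xi,\xi\rangle$. A $\cAL$-module extends to a representation of $\cPL$ by the universal property, and Definition~\ref{def:P}(2) then gives the identity immediately. To keep the argument self-contained at the level of modules, however, I would prove it directly by induction on $|m|=\sum_i m_i$. This mirrors the proof of Lemma~\ref{lem:complete-tree} and uses only the edge relation of Definition~\ref{def:A} together with multiplicativity $A_{\mu\nu}=A_\nu A_\mu$.

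\emph{Base case} $m=0$. Here $v\Lambda^{\le 0}=\{v\}$ and $A_v$ restricted to $H_v$ is the identity, hence an isometry.

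\emph{Inductive step.} For $|m|\ge 1$, pick $i$ with $m_i>0$ and set $n=m-e_i$. By local convexity, \eqref{eq:LambdaLE factorisation} gives $v\Lambda^{\le m}=v\Lambda^{\le n}\cdot\Lambda^{\le e_i}$. I also need this factorisation to be unique: each $\lambda\in v\Lambda^{\le m}$ should be written uniquely as $\mu\nu$ with $\mu\in v\Lambda^{\le n}$ and $\nu\in s(\mu)\Lambda^{\le e_i}$. Here $\nu=s(\mu)$ exactly when $s(\mu)\Lambda^{e_i}=\emptyset$. This uniqueness is the point I expect to need care with; I would argue it from the unique factorisation property and the maximality built into the definition of $\Lambda^{\le m}$, or simply cite \cite[Lemma 2.2]{Sims10}.

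Granting uniqueness, the computation is
\[
\sum_{\lambda\in v\Lambda^{\le m}}A_\lambda^*A_\lambda=\sum_{\mu\in v\Lambda^{\le n}}A_\mu^*\Big(\sum_{\nu\in s(\mu)\Lambda^{\le e_i}}A_\nu^*A_\nu\Big)A_\mu .
\]
The inner sum equals $A_{s(\mu)}$. When $s(\mu)\Lambda^{e_i}\neq\emptyset$ this is the module relation of Definition~\ref{def:A}; otherwise the sum is just $A_{s(\mu)}^*A_{s(\mu)}=A_{s(\mu)}$. Since $A_\mu$ maps into $H_{s(\mu)}$, we have $A_{s(\mu)}A_\mu=A_\mu$, so the whole expression becomes $\sum_{\mu\in v\Lambda^{\le n}}A_\mu^*A_\mu$. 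By the induction hypothesis this equals $A_v$, which completes the induction.

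Row-finiteness ensures every sum above is finite.
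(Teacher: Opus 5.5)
Your proof is correct and is essentially the paper's argument: induction on $\|m\|_1$, splitting $v\Lambda^{\le m}=v\Lambda^{\le n}\Lambda^{\le e_i}$ with $n=m-e_i$, and applying the one-colour module relation at each $s(\mu)$. The only difference is that you work with the operator identity $\sum_\lambda A_\lambda^*A_\lambda=A_v$, whereas the paper computes $\|A_{v\Lambda^{\le m}}h\|^2$ directly; the uniqueness of $\lambda=\mu\nu$ that you flag is used implicitly by the paper when it rewrites the sum as a double sum, and it does follow from the maximality built into $\Lambda^{\le n}$, as you suggest.
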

\begin{proof}
We induct on $\|m\|_1$. If $m = 0$ the results is trivial since $A_v$ is the identity on $H_v$. If $\|m\|_1 = 1$ then $m = e_i$ for some $i$,
and either $v\Lambda^{e_i}$ is empty in which case $A_{v\Lambda^{\le m}} = A_v$ is an isometry as already discussed, or
$A_{v\Lambda^{\le m}} = (A_\lambda)_{\lambda \in v\Lambda^{e_i}}$ is an isometry by definition of a $\cAL$-module.
Now suppose that $A_{v\Lambda^{\le m}}$ is an isometry
whenever $\|m\|_1 \le N$ and fix $m$ with $\|m\|_1 = N+1$. Then there is an index $i$ such that $m_i > 0$; let $n = m - e_i \in \N^k$. By \cite[Lemma~3.12]{Raeburn-Sims-Yeend03},
we have $\Lambda^{\le m} = \Lambda^{\le n} \Lambda^{\le e_i}$. So for $h \in H_v$,
\[
\|A_{v\Lambda^{\le m}} h \|^2
    = \sum_{\lambda \in \Lambda^{\le m}} \|A_\lambda h \|^2
    = \sum_{\mu \in \Lambda^{\le n}} \Big(\sum_{\alpha \in \Lambda^{\le e_i}} \|A_\alpha (A_\mu h)\|^2\Big).
    = \sum_{\mu \in \Lambda^{\le n}} \big\|A_{s(\mu) \Lambda^{\le e_i}} (A_\mu h)\big\|^2.
\]
The base case applied to each $A_{s(\mu) \Lambda^{\le e_i}}$ shows that this is equal to $\|A_{v\Lambda^{\le n}} h\|^2$,
and the inductive hypothesis then gives the result.
\end{proof}

\subsubsection{First construction of the Hilbert space.}\label{sec:first construction}
Let $L^{00}$ be the formal linear span of pairs $(\lambda,\xi)$ with $\lambda\in \Lambda$ and $\xi\in H_{s(\lambda)}$ (recall that $s(\lambda)$ is the source of the path $\lambda$).
Let
\begin{align*}
N = \operatorname{span}\Big(\{c&(\lambda,\xi) - (\lambda, c\xi):\ c \in \C, \lambda \in \Lambda, \xi \in H_{s(\lambda)}\}\\
    &\cup \{(\lambda,\xi) + (\lambda, \xi') - (\lambda, \xi + \xi'):\ \lambda \in \Lambda, \xi,\xi' \in H_{s(\lambda)}\}\\
    &\cup \Big\{(\lambda,\xi) - \sum_{\mu\in s(\lambda)\Lambda^{e_i}} (\lambda\mu , A_\mu\xi):\ \lambda \in \Lambda, \xi \in H_{s(\lambda)}, 1 \le i \le k, s(\lambda)\Lambda^{e_i} \not= \emptyset\Big\}\Big),
\end{align*}
and let $L^0 := L^{00}/N$; we write $[\lambda,\xi] := (\lambda,\xi) + N \in L^0$ for each $\lambda \in \Lambda$ and $\xi \in H_{s(\lambda)}$. By
definition of $N$, the spanning elements of $L^0$ satisfy the \emph{caret} relations
\begin{align}\tag{C} \label{eqn:caret}
	[\lambda,\xi] = \sum_{\mu\in s(\lambda)\Lambda^{e_i}} [\lambda\mu , A_\mu\xi]	
\end{align}
whenever $s(\lambda)\Lambda^{e_i}$ is nonempty.
\makeatletter
\def\@currentlabel{C}
\makeatother
We interpret this equality as gluing the $(v,i)$-\emph{caret} at the source of the path $\lambda$ and lifting the vector $\xi$ from the source of $\lambda$ to the source of each edge $\mu$ of the caret.
An induction on $\|n\|_1$ using~\eqref{eq:LambdaLE factorisation} and the caret relations shows that
\begin{equation} \label{eqn:general caret}
	[\lambda,\xi] = \sum_{\mu\in s(\lambda)\Lambda^{\le n}} [\lambda\mu , A_\mu\xi]	\quad\text{ for all $n \in \N^k$.}
\end{equation}

We equip $L^0$ with an inner product.

\begin{proposition}\label{prp:L inner prod}
	There is a unique sesquilinear form $\langle \cdot, \cdot \rangle$ on $L^0$ such that for $\lambda, \mu \in \Lambda$, and $\xi \in H_{s(\lambda)},\ \eta \in H_{s(\mu)}$,
	\begin{enumerate}
		\item if $r(\lambda) \neq r(\mu)$ then
		\[\langle [\lambda, \xi], [\mu, \eta] \rangle = 0;\]
		\item if $d(\lambda) = d(\mu)$ then
			\[\langle [\lambda, \xi], [\mu, \eta] \rangle =
			\begin{cases}
				\langle \xi, \eta \rangle_{H}, &\quad \lambda = \mu \\
				0, &\quad \lambda \neq \mu
			\end{cases};\]		
		\item if $i \le k$ satisfies $d(\lambda)_i < d(\mu)_i$ and $s(\lambda) \Lambda^{e_i} = \emptyset$, then
        \[\langle [\lambda, \xi], [\mu, \eta] \rangle = 0.\]
	\end{enumerate}
    This form is an inner product, and for $\lambda, \mu \in \Lambda$, and $\xi \in H_{s(\lambda)}$ and $\eta \in H_{s(\mu)}$, we have
    \begin{equation}\label{eq:ip formula}
    \langle [\lambda, \xi], [\mu, \eta]\rangle = \sum_{\lambda\nu = \mu\gamma \in \Lambda^{\leq d(\lambda) + d(\mu)}} \langle A_{\nu} \xi, A_{\gamma} \eta\rangle_H.
    \end{equation}
\end{proposition}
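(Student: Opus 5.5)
The strategy is to \emph{define} the form by \eqref{eq:ip formula} on $L^{00}$, show it descends to $L^0=L^{00}/N$, and then verify properties (1)--(3), uniqueness, and positive definiteness.

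For $\lambda,\mu\in\Lambda$, $\xi\in H_{s(\lambda)}$, $\eta\in H_{s(\mu)}$, set
\[
B((\lambda,\xi),(\mu,\eta)) := \sum_{\lambda\nu = \mu\gamma \in \Lambda^{\leq d(\lambda) + d(\mu)}} \langle A_{\nu} \xi, A_{\gamma} \eta\rangle_H,
\]
which is zero if $r(\lambda)\neq r(\mu)$. The sum is finite by row-finiteness, and $B$ extends sesquilinearly to $L^{00}$. It then kills the first two families of generators of $N$.

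The key point is the caret relation. The cleanest route is to first prove a more flexible formula: for every $p\ge d(\lambda)\vee d(\mu)$ (coordinatewise max),
\[
B((\lambda,\xi),(\mu,\eta))=\sum_{\lambda\nu=\mu\gamma\in\Lambda^{\le p}}\langle A_\nu\xi,A_\gamma\eta\rangle .
\]
This independence of $p$ follows by increasing $p$ one colour at a time. Each common extension $\lambda\nu=\mu\gamma$ in $\Lambda^{\le p}$ either has no $e_i$-edges at its source, or splits into the extensions $\lambda\nu\alpha=\mu\gamma\alpha$ with $\alpha\in s(\nu)\Lambda^{e_i}$. This uses \eqref{eq:LambdaLE factorisation}, $\Lambda^{\le p+e_i}=\Lambda^{\le p}\Lambda^{\le e_i}$, together with unique factorisation. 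The contributions then agree because
\[
\sum_\alpha \langle A_\alpha A_\nu\xi,A_\alpha A_\gamma\eta\rangle=\langle A_\nu\xi,A_\gamma\eta\rangle,
\]
by polarising the isometry of Lemma~\ref{lem:isometries} and using $A_{\nu\alpha}=A_\alpha A_\nu$. One further point must be checked: common extensions of $\lambda,\mu$ in $\Lambda^{\le p}$ are exactly the minimal common extensions in $\Lambda^{\le d(\lambda)\vee d(\mu)}$ extended further. I expect this bookkeeping with $\Lambda^{\le}$ and sources, where local convexity is essential, to be the main obstacle.

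With the $p$-independence in hand, $B$ vanishes on the caret generators. Expanding $\sum_{\alpha\in s(\lambda)\Lambda^{e_i}}B((\lambda\alpha,A_\alpha\xi),(\mu,\eta))$ using a large $p$ gives a sum over common extensions $\lambda\alpha\nu'=\mu\gamma$. By \eqref{eq:LambdaLE factorisation} these are exactly the common extensions $\lambda\nu=\mu\gamma$ of $\lambda$ in $\Lambda^{\le p}$, with $\nu=\alpha\nu'$, and $A_{\nu'}A_\alpha\xi=A_\nu\xi$. So the sum equals $B((\lambda,\xi),(\mu,\eta))$. Hence $B$ descends to $L^0$.

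Properties (1)--(3) are then read off from the formula. For (1), there are no common extensions when the ranges differ. For (2), if $d(\lambda)=d(\mu)=m$, the only possible common extension in $\Lambda^{\le 2m}$ of the form $\lambda\nu=\mu\gamma$ with $d(\nu)=d(\gamma)$ forces $\lambda=\mu$ and $\nu=\gamma$ by unique factorisation. The sum then becomes $\sum_{\nu\in s(\lambda)\Lambda^{\le m}}\langle A_\nu\xi,A_\nu\eta\rangle=\langle\xi,\eta\rangle$ by Lemma~\ref{lem:isometries}. For (3), no extension of $\lambda$ can acquire colour $i$, so its degree never reaches that of $\mu$ in coordinate $i$ and there are no common extensions.

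For uniqueness, any element of $L^0$ can be rewritten using \eqref{eqn:general caret} as a combination of classes $[\lambda,\xi]$ with all $\lambda\in v\Lambda^{\le n}$ for a single large $n$ (per root). The values on such pairs are fixed by (1)--(3): either the degrees equal $n$, or some colour is blocked. Positivity follows the same way. Writing $x=\sum_{\lambda\in v\Lambda^{\le n}}[\lambda,\xi_\lambda]$, properties (2) and (3) give $\langle x,x\rangle=\sum\|\xi_\lambda\|^2$. This is zero only if every $\xi_\lambda$ vanishes, so the form is definite.
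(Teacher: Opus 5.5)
Your proof is correct and has the same outline as the paper's: define the form on $L^{00}$ by \eqref{eq:ip formula}, push it down to $L^0$, read off (1)--(3), and refine to one level $\Lambda^{\le n}$ for uniqueness and definiteness. The descent step, however, is genuinely different.

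The paper only checks that each caret generator $x=(\lambda,\xi)-\sum_\alpha(\lambda\alpha,A_\alpha\xi)$ satisfies $[x\mid x]=0$. It does this via $[(\lambda,\xi)\mid\sum_\alpha(\lambda\alpha,A_\alpha\xi)]=[(\lambda,\xi)\mid(\lambda,\xi)]$ and Lemma~\ref{lem:isometries}, and then declares that $N$ lies in the kernel. Passing from $[x\mid x]=0$ to $[x\mid y]=0$ for all $y$ is Cauchy--Schwarz, which needs positive semidefiniteness on $L^{00}$. The paper proves positivity only afterwards, on $L^0$.

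Your independence-of-$p$ lemma gives $B(x,\cdot)\equiv 0$ directly. Together with the evident conjugate symmetry of $B$, this puts $N$ in both radicals. It also exhibits $B$ as the pullback of the inner product on $\bigoplus_{\tau\in\Lambda^{\le p}}H_{s(\tau)}$ under $(\lambda,\xi)\mapsto(A_\nu\xi)_{\tau=\lambda\nu}$, so positivity on $L^{00}$ comes for free.

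The price is the $\Lambda^{\le p}$ bookkeeping you flagged, and it works. Suppose $p\ge d(\lambda)\vee d(\mu)$, $\tau\in\Lambda^{\le p}$, $d(\alpha)=e_i$ and $\tau\alpha\in\lambda\Lambda$. Then $s(\tau)\Lambda^{e_i}\neq\emptyset$ forces $d(\tau)_i=p_i\ge d(\lambda)_i$. Hence $d(\lambda)\le d(\tau)$, and $\lambda$ is already a prefix of $\tau$. So common extensions at level $p+e_i$ are exactly the one-step extensions of common extensions at level $p$.

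One caveat, which the paper's uniqueness argument shares, concerns pairs $\lambda\neq\mu$ in $v\Lambda^{\le n}$ with $d(\mu)\le d(\lambda)$ and $d(\mu)\neq d(\lambda)$. There the blocked colour belongs to the \emph{second} argument, so (3) as stated only gives $\langle[\mu,\eta],[\lambda,\xi]\rangle=0$. You are tacitly using conjugate symmetry to get $\langle[\lambda,\xi],[\mu,\eta]\rangle=0$. Make Hermitian symmetry (or a symmetric reading of (3)) an explicit hypothesis, since otherwise (1)--(3) do not pin down these values.
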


\begin{proof}
Since the spanning elements $(\lambda, \xi)$ of $L^{00}$ are by definition linearly independent, there is a unique sesquilinear form $[\cdot \mid \cdot] : L^{00} \to \C$
such that
\begin{equation}\label{eq:formdef}
 \big[ (\lambda, \xi) \mathbin{\big|} (\mu, \eta)\big] = \sum_{\lambda\nu = \mu\gamma \in \Lambda^{\leq d(\lambda) + d(\mu)}} \langle A_{\nu} \xi, A_{\gamma} \eta\rangle_H
\end{equation}
for all $\lambda,\mu \in \Lambda$ and all $\xi \in H_{s(\lambda)}$ and $\eta \in H_{s(\mu)}$. Since $\langle \cdot, \cdot \rangle_H$ is an inner product,
we have
\[
\big[c(\lambda,\xi) - (\lambda, c\xi) \mathbin{\big|} v\big] = 0 = \big[(\lambda,\xi) + (\lambda, \xi') - (\lambda, \xi - \xi') \mathbin{\big|} v\big]
\]
for all $c \in \C$, $\lambda \in \Lambda$, $\xi, \xi' \in H_{s(\lambda)}$ and $v \in L^{00}$. Moreover, given
$\lambda \in \Lambda$, $\xi \in H_{s(\lambda)}$, and $1 \le i \le k$ such that $s(\lambda)\Lambda^{e_i} \not= \emptyset$,
we have
\begin{align*}
\Big[(\lambda, \xi) \mathbin{\Big|} &\sum_{\alpha \in s(\lambda)\Lambda^{e_i}} (\lambda\alpha, A_\alpha \xi)\Big]
    = \sum_{\alpha \in s(\lambda)\Lambda^{e_i}} \Big[(\lambda, \xi) \mathbin{\Big|} (\lambda\alpha, A_\alpha \xi)\Big]\\
    &= \sum_{\alpha \in s(\lambda)\Lambda^{e_i}} \sum_{\lambda\tau = \lambda\alpha\rho \in \Lambda^{\le 2d(\lambda) + e_i}} \langle A_\tau \xi A_\rho A_\alpha\xi\rangle_H
    = \sum_{\tau \in s(\lambda)\Lambda^{\le 2d(\lambda) + e_i}} \langle A_\tau \xi, A_\tau \xi\rangle_H,
\end{align*}
and this is equal to $\big[(\lambda, \xi) \mathbin{\big|} (\lambda,\xi)\big]$ by Lemma~\ref{lem:isometries}. Using this multiple times, we see that
\begin{align*}
\Big[(\lambda, \xi) - {}&\sum_{\alpha \in s(\lambda)\Lambda^{e_i}} (\lambda\alpha, A_\alpha \xi) \mathbin{\Big|}
        (\lambda, \xi)- \sum_{\alpha \in s(\lambda)\Lambda^{e_i}} (\lambda\alpha, A_\alpha \xi)\Big]\\
    &= \big[(\lambda, \xi) \mathbin{\big|} (\lambda, \xi)\big]
        - \Big[(\lambda, \xi) \mathbin{\Big|} \sum_{\alpha \in s(\lambda)\Lambda^{e_i}} (\lambda\alpha, A_\alpha \xi)\Big]\\
        &\qquad{}- \Big[\sum_{\alpha \in s(\lambda)\Lambda^{e_i}} (\lambda\alpha, A_\alpha \xi) \mathbin{\Big|} (\lambda, \xi)\Big]
        + \Big[\sum_{\alpha \in s(\lambda)\Lambda^{e_i}} (\lambda\alpha, A_\alpha \xi) \mathbin{\Big|} \sum_{\alpha \in s(\lambda)\Lambda^{e_i}} (\lambda\alpha, A_\alpha \xi)\Big]\\
    &= \big[(\lambda, \xi) \mathbin{\big|} (\lambda, \xi)\big] - \big[(\lambda, \xi) \mathbin{\big|} (\lambda, \xi)\big]  - \big[(\lambda, \xi) \mathbin{\big|} (\lambda, \xi)\big]  + \big[(\lambda, \xi) \mathbin{\big|} (\lambda, \xi)\big]\\
    &= 0.
\end{align*}
So $N$ is contained in the kernel $\{\xi : [\xi \mid \xi] = 0\}$ of $[\cdot \mid \cdot]$ and hence $[\cdot \mid \cdot]$ descends to a sesquilinear form $\langle\cdot, \cdot\rangle$
on $L^0$.

This form satisfies~(1) because if $r(\lambda) \neq r(\mu)$ then the sum~\eqref{eq:formdef} is empty. It satisfies~(2) because
if $d(\lambda) = d(\mu)$, then the sum is empty if $\lambda \neq \mu$, and if $\lambda = \mu$, then
\[
\sum_{\lambda\nu = \mu\gamma \in \Lambda^{\leq d(\lambda) + d(\mu)}} \langle A_\tau \xi, A_\tau \xi\rangle_H
    = \sum_{\nu \in \Lambda^{\leq d(\lambda)}} \langle A_\nu \xi, A_\nu \eta\rangle_H
    = \langle \xi, \eta\rangle_H
\]
by Lemma~\ref{lem:isometries}. It satisfies~(3) because if $d(\lambda)_i < d(\mu)_i$ and $s(\lambda) \Lambda^{e_i} = \emptyset$,
then in particular there are no solutions to $\lambda \nu = \mu\gamma$, so the sum~\eqref{eq:formdef} is again empty.

For uniqueness, note that if $\langle \cdot, \cdot \rangle$ is a sesquilinear form on $L^0$ satisfying (1)--(3), then for $\lambda,\mu \in \Lambda$ and $\xi \in H_{s(\lambda)}$ and $\eta \in H_{s(\mu)}$,
use~\eqref{eqn:general caret} and then properties (2)~and~(3) to see that
\[
    \langle [\lambda, \xi], [\mu, \eta]\rangle
        = \sum_{\substack{\nu \in s(\lambda)\Lambda^{\le d(\mu)}\\ \gamma \in s(\mu)\Lambda^{\le d(\lambda)}}} \langle [\lambda\nu, A_\nu \xi], [\mu\gamma, A_\gamma \eta]\rangle
        = \sum_{\lambda\nu = \mu\gamma \in \Lambda^{\le d(\lambda) + d(\mu)}} \langle A_\nu \xi, A_\gamma \eta]\rangle_H;
\]
that is, any sesquilinear form satisfying (1)--(3) satisfies~\eqref{eq:ip formula}.

It remains to show that $\langle\cdot, \cdot\rangle$ is an inner product. For this, fix a finite sum $v = \sum_{\lambda \in F} c_\lambda(\lambda, \xi_\lambda) \in L^{00} \setminus N$. We must show
that $\langle v  + N, v + N\rangle \neq 0$. Let $n = \bigvee_{\lambda \in F} d(\lambda)$. Then each
\[
c_\lambda(\lambda, \xi_\lambda) - \sum_{\mu \in s(\lambda)^{\lambda^{\le n - d(\lambda)}}} \big(\lambda\mu, c_\lambda A_\mu \xi_\lambda\big)
\]
belongs to $N$, and so
\[
v + N = \sum_{\lambda \in F} \sum_{\mu \in s(\lambda)^{\lambda^{\le n - d(\lambda)}}} \big(\lambda\mu, c_\lambda A_\mu \xi_\lambda\big).
\]
So we can assume that $F \subseteq \Lambda^{\le n}$. In this case, for distinct $\lambda, \lambda' \in F$ we have
$\lambda\Lambda \cap \lambda'\Lambda = \emptyset$. Hence
\[
\langle v + N, v + N\rangle
    = \sum_{\lambda \in F} \langle [\lambda, c_\lambda\xi_\lambda], [\lambda, c_\lambda\xi_\lambda]\rangle
    = \sum_{\lambda \in F} \langle c_\lambda\xi_\lambda, c_\lambda\xi_\lambda\rangle_H
    = \sum_{\lambda \in F} \|c_\lambda \xi_\lambda\|^2,
\]
which is nonzero because $\sum_\lambda c_\lambda(\lambda,\xi_\lambda) \not\in N$, and so at least one $c_\lambda \xi_\lambda$ is nonzero.
\end{proof}

\begin{remark}\label{rmk:tree identity}
Let $t$ be a tree of $\Lambda$ at $v$, suppose that $\mu \in t$ and suppose that $s(\mu)\Lambda^{e_i} \not= \emptyset$. Fix $\xi \in H_v$.
By the caret relation~\eqref{eqn:caret}, we have $[\mu, A_\mu \xi] = \sum_{\alpha \in s(\mu)\Lambda^{e_i}} [\mu, A_\alpha A_\mu \xi] = \sum_{\lambda \in \mu\Lambda^{e_i}} [\lambda, A_\lambda\xi]$.
Consequently,
\[
\sum_{\nu \in t} [\nu, A_\nu \xi]
    = \sum_{\nu \in t \setminus \{\mu\}} [\nu, A_\nu \xi] + \sum_{\lambda \in \mu\Lambda^{e_i}} [\lambda, A_\lambda \xi]
    = \sum_{\nu \in t *_\mu {\vee_{s(\mu), i}}} [\nu, A_\nu \xi].
\]
So by definition of the set $\cT$ of trees of $\Lambda$ and an induction, we obtain the tree identity
\[
[v,\xi] = \sum_{\mu \in t} [\mu, A_\mu \xi]\qquad\text{ for any tree $t$ of $\Lambda$ at $v$.}
\]
\end{remark}

\begin{definition}\label{dfn:Lspace}
We define $L$ to be the Hilbert space completion of $L^0$ with respect to the inner product of Proposition~\ref{prp:L inner prod}.
\end{definition}

Note that $L$ is equal to the closed linear span of equivalence classes $[\lambda,\xi]$ where $\lambda \in \Lambda$ and $\xi\in H_{s(\lambda)}$.

\subsubsection{A second construction of \texorpdfstring{$L$}{L}.} \label{subsec:scond-constr-Hilb}
We now give a second description of $L$ using a collection of Hilbert spaces indexed by \emph{trees}.
Fix a vertex $v\in\Lambda^0$ and consider $v\cT$ the set of trees of $\Lambda$ at $v$.

For each $t \in v\cT$, let $H_t$ denote the set $\{t\} \times \big(\bigoplus_{\mu \in t} H_\mu\big)$,
regarded as a copy of the Hilbert space $\bigoplus_{\mu \in t} H_{s(\mu)}$.
So
\[
H_t = \{(t,\xi) \mid \xi : t \to H \text{ satisfies } \xi_\mu \in H_{s(\mu)}\text{ for all $\mu \in t$}\}
\]
and
\[\langle (t, \xi), (t, \eta)\rangle = \sum_{\mu \in t} \langle \xi_\mu, \eta_\mu\rangle_H.\]

Suppose that $\mu \in t$ and $s(\mu) \Lambda^{e_i}$ is nonempty, and let $t' := t *_\mu {\vee_{s(\mu), i}}$, the tree obtained
by attaching the caret $s(\mu)\Lambda^{e_i}$ to $t$ at $s(\mu)$. Define
\[
\tilde{A}_{\mu, i} : \bigoplus_{\mu \in t} H_{s(\mu)} \to \bigoplus_{\nu \in t'} H_{s(\nu)}\quad\text{ by }\quad
    (\tilde{A}_{\mu, i} \xi)_\lambda
        = \begin{cases}
            \xi_\lambda &\text{ if $\lambda \in t \setminus \{\mu\}$}\\
            A_\alpha \xi_\mu &\text{ if $\lambda = \mu\alpha \in t' \setminus t$,}
        \end{cases}
\]
and let $\psi_{t, t'} \colon H_t \to H_{t'}$ be the map defined by applying $\tilde{A}_{\mu,i}$ in the second coordinate.
Lemma~\ref{lem:isometries} implies that $\psi_{t, t'}$ is an isometry.

An induction shows that given a sequence of trees
\[
t_0, t_1, t_2, \dots, t_n,
\]
and sequence of paths $\mu_j \in t_j, j < n$ and of indices $i_0, \dots, i_{n-1} \in \{1, \dots, k\}$ such
that $t_{j+1} = t_j *_{\mu_j} {\vee_{s(\mu_j), i_j}}$ for each $j < n$, the map
\[
\psi_{t_{n-1}, t_n} \circ \psi_{t_{n-2}, t_{n-1}} \circ \cdots \circ \psi_{t_0, t_1}
\]
is given by $(t_0, \xi) \mapsto (t_n, (A_\tau \xi_\mu)_{\mu \in t, \mu\tau \in t'})$. Thus,
we obtain a family of isometries $\psi_{t, t'}$, $t \le t' \in v\cT$ satisfying
$\psi_{t', t''} \circ \psi_{t, t'} = \psi_{t, t''}$. Hence
\[
(H_t, \psi_{t, t'})_{t \in v\cT}
\]
is a directed system of Hilbert spaces with connecting isometries, and we can form the
direct-limit Hilbert space
\[
\wh H_v = \varinjlim_{t \in v\cT} (H_t, \psi_{t, t'}).
\]
We write
\[
\wh H_v^0 := \bigcup_{t \in v\cT} \psi_t(H_t)
\]
for the canonical dense subspace.

Since the $\psi_{t,t'}$ are isometric, the canonical inclusions $\psi_t \colon H_t \to \wh H_v$
are also isometric, and the increasing union $\bigcup_{t \in v\cT} \psi_t(H_t)$ is dense in $\wh H_v$.
We have $\psi_t(t, \xi) = \psi_{t'}(t', \eta)$ if $\psi_{t, t''}(t, \xi) = \psi_{t',t''}(t', \eta)$ for some
(equivalently any) $t''$ satisfying $t, t' \le t''$. Hence the equivalence relation $\sim_\psi$ on
$\bigsqcup_{t \in v\cT} H_t$ given by $(t, \xi) \sim (t', \eta)$ if and only if
$\psi_t(\xi) = \psi_{t'}(\eta)$ is the equivalence relation generated by the equivalences
$(t, \xi) \sim (t *_\mu {\vee_{s(\mu), i}}, \tilde{A}_{\mu, i}\xi)$. So we can identify
$\wh H_v$ with $\big(\bigsqcup_{t \in v\cT} H_t\big)/{\sim_\psi}$, and we write
\[
[t, \xi] := \psi_t(t,\xi) \in \wh H_v.
\]

We define
\[
\wh H := \bigoplus_{v \in \Lambda^0} \wh H_v,\quad\text{ and }\quad
\wh H^0 := \bigoplus_{v \in \Lambda^0} \wh H_v^0.
\]

For $t \in v\cT$, and $\xi,\eta \in H_t$, we have
\[
\langle [t, \xi], [t, \eta]\rangle_{\wh H_v}
    = \langle (t, \xi), (t, \eta)\rangle
    = \sum_{\mu \in t} \langle \xi, \eta\rangle_H
    = \Big\langle \sum_{\mu \in t} [\mu, \xi_\mu], \sum_{\mu \in t} [\mu, \eta_\mu]\Big\rangle_L,
\]
so there is an isometric linear map $\iota_t : H_t \to L$ given by
$\iota_t([t, \xi]) = \sum_{\mu \in t} [\mu, \xi_\mu]$.  Remark~\ref{rmk:tree identity} implies
that $\iota_{t'} \circ \psi_{t, t'} = \iota_t$ for all $t$, so the $\iota_t$ assemble into
an isometry $\iota \colon \wh H \to L$. For $\mu \in \Lambda$ and $\xi \in H_{s(\mu)}$, let
$\xi 1_\mu \in \bigoplus_{\mu \in t} H_{s(\mu)}$ be the vector such that
$(\xi 1_\mu)_\nu = \delta_{\mu,\nu} \xi$. Then
\[
    [\mu, \xi] = \iota([t_{r(\mu)}^{d(\mu)}, \xi 1_\mu]).
\]
So $\iota$ has dense range, and is therefore a unitary isomorphism $\iota : \wh H \to L$.

\begin{remark}
We visualise a vector $[t, \xi] \in \wh H^0_v$ as the tree $t$ drawn as in the preceding section, with each leaf $\mu$ decorated by the vector $\xi_\mu$.
The caret relation is then represented visually by asserting that the tree $t$ with leaves decorated by $\xi$, and the
tree $t *_\mu {\vee_{s(\mu), i}}$ with leaved decorated by $\tilde{A}_{\mu, i} \xi$ represent the same vector. For example, if $t = \{a, b, c\}$ is a tree at $v$
and $\vee_{s(c), i} = \{d, e\}$ is a basic caret at $s(c)$, and if $t' = t *_e {\vee_{s(c), i}}$, then in the diagram below, the decorated tree on the left represents $[t,\xi]$ and
the one on the right represents $[\psi_{t, t'}(t, \xi)]$; the diagram in the middle indicates the transition from one to the other.
\[\begin{tikzpicture}[scale=0.55, decoration={markings, mark=at position 0.5 with {\arrow{stealth}}}]
	\begin{pgfonlayer}{nodelayer}
		\node [style=none] (0) at (0, 0) {};
		\node [style=none] (1) at (-3, 3) {};
		\node [style=none] (2) at (0, 3) {};
		\node [style=none] (3) at (3, 3) {};
		\node [style=none] (4) at (-2, 1.5) {};
		\node [style=none] (5) at (-2, 1.5) {$a$};
		\node [style=none] (6) at (-0.25, 1.75) {$b$};
		\node [style=none] (7) at (2.25, 1.75) {};
		\node [style=none] (8) at (2.25, 1.75) {$c$};
		\node [style=none] (9) at (-3, 3.75) {$\xi_1$};
		\node [style=none] (10) at (0, 3.75) {$\xi_2$};
		\node [style=none] (11) at (3, 3.75) {$\xi_3$};
%		\node [style=none] (12) at (-4.5, 1) {$(t,\xi)=$};
		\node [style=none] (14) at (4.5, 2) {$=$};
		\node [style=none] (15) at (4.5, 1) {};
		\node [style=none] (16) at (4.5, 1) {};
		\node [style=none] (17) at (4.5, 1) {};
		\node [style=none] (18) at (4.5, 1) {};
		\node [style=none] (19) at (8.5, 0) {};
		\node [style=none] (20) at (5.5, 3) {};
		\node [style=none] (21) at (8.5, 3) {};
		\node [style=none] (22) at (11.5, 3) {};
		\node [style=none] (23) at (6.5, 1.5) {};
		\node [style=none] (24) at (6.5, 1.5) {$a$};
		\node [style=none] (25) at (8.25, 1.75) {$b$};
		\node [style=none] (26) at (10.75, 1.75) {};
		\node [style=none] (27) at (10.75, 1.75) {$c$};
		\node [style=none] (28) at (5.5, 3.75) {$\xi_1$};
		\node [style=none] (29) at (8.5, 3.75) {$\xi_2$};
		\node [style=none] (30) at (10, 6) {};
		\node [style=none] (31) at (13, 6) {};
		\node [style=none] (33) at (10, 4.25) {$d$};
		\node [style=none] (34) at (13, 4.25) {$e$};
		\node [style=none] (35) at (10, 6.75) {$A_d\xi_3$};
		\node [style=none] (36) at (13, 6.75) {$A_e\xi_3$};
		\node [style=none] (37) at (14.75, 2) {$=$};
		\node [style=none] (38) at (14.75, 1) {};
		\node [style=none] (39) at (14.75, 1) {};
		\node [style=none] (40) at (14.75, 1) {};
		\node [style=none] (41) at (14.75, 1) {};
		\node [style=none] (42) at (19, 0) {};
		\node [style=none] (43) at (16, 4) {};
		\node [style=none] (44) at (18, 4) {};
		\node [style=none] (47) at (16, 2.75) {$a$};
		\node [style=none] (48) at (17.75, 2.75) {$b$};
		\node [style=none] (51) at (16, 4.75) {$\xi_1$};
		\node [style=none] (52) at (18, 4.75) {$\xi_2$};
		\node [style=none] (53) at (20, 4) {};
		\node [style=none] (54) at (22, 4) {};
		\node [style=none] (55) at (21.75, 2.75) {$ce$};
		\node [style=none] (56) at (19.25, 2.75) {$cd$};
		\node [style=none] (57) at (19.25, 2.75) {};
		\node [style=none] (58) at (20, 4.75) {$A_d\xi_3$};
		\node [style=none] (59) at (22, 4.75) {$A_e\xi_3$};
		\node [style=none] (60) at (6.75, 7.5) {};
	\end{pgfonlayer}
	\begin{pgfonlayer}{edgelayer}
		\draw[postaction={decorate}] (1.center) to (0.center);
		\draw[postaction={decorate}] (2.center) to (0.center);
		\draw[postaction={decorate}] (3.center) to (0.center);
		\draw[postaction={decorate}] (20.center) to (19.center);
		\draw[postaction={decorate}] (21.center) to (19.center);
		\draw[postaction={decorate}] (22.center) to (19.center);
		\draw[postaction={decorate}] (30.center) to (22.center);
		\draw[postaction={decorate}] (31.center) to (22.center);
		\draw[postaction={decorate}] (43.center) to (42.center);
		\draw[postaction={decorate}] (44.center) to (42.center);
		\draw[postaction={decorate}] (53.center) to (42.center);
		\draw[postaction={decorate}] (54.center) to (42.center);
	\end{pgfonlayer}
\end{tikzpicture}\]
\end{remark}

\subsubsection{Decompositions of \texorpdfstring{$L$}{L}}
Recall that $\le$ is defined on $\Lambda$ by $\lambda \le \mu$ if and only if $\mu \in \lambda\Lambda$. For $\lambda \in \Lambda$, we define
\[L_\lambda := \overline{\operatorname{span}}\{[\mu,\xi] : \lambda \le \mu, \xi \in H_{s(\mu)}\} \le L.\]
By definition of the inner-product on $L$, we have $L_\lambda \perp L_\mu$ whenever $\lambda \Lambda \cap \mu\Lambda = \emptyset$. In particular,
\[
L = \bigoplus_{\lambda \in \Lambda^{\le m}} L_\lambda\quad\text{ for any $m \in \N^k$.}
\]
Taking $m = 0$ we obtain the decomposition $L = \bigoplus_{v \in \Lambda^0} L_v$; observe that each $L_v = \iota(\wh H_v)$ in the notation of the preceding section.

For any $v \in \Lambda^0$ and $i \le k$ such that $v\Lambda^{e_i} \not= \emptyset$, we have
\[
L_v = \bigoplus_{e \in v\Lambda^{e_i}} L_e.
\]
More generally, for each $\lambda \in \Lambda$ and each $m \in \N^k$, we have
\[
L_\lambda = \bigoplus_{\mu \in s(\lambda)\Lambda^{\le m}} L_{\lambda\mu}.
\]

In particular, if $\vee_{v,i}$ and $\vee_{v,j}$ are nontrivial basic carets at $v$, then
\begin{equation}\label{eq:Lv decomp}
    L_v = \bigoplus_{e \in v\Lambda^{e_i}} L_e
        = \bigoplus_{f \in v\Lambda^{e_j}} L_f
        = \bigoplus_{\lambda \in v\Lambda^{\le e_i + e_j}} L_\lambda,
\end{equation}
three (in general) different decompositions of $L_v$, the third of which is a refinement of each of the other two.

\subsection{Action of \texorpdfstring{$\cOL$}{O(Lambda)} on \texorpdfstring{$L$}{L}}\label{subsec:action of OL on L}
We now define the action $\cOL$ on $L$ denoted $x_\lambda\mapsto X_\lambda$. Loosely speaking, $X_\lambda$ acts like a sort of forward shift by $\lambda$: it shifts the copy $[\mu, \xi]$ of a vector $\xi \in H_{s(\mu)}$ indexed by $\mu$
to the copy $[\lambda\mu, \xi]$ indexed by $\lambda\mu$ whenever this is allowed. Consequently, $X_\lambda^*$ acts by removing $\lambda$.

\begin{lemma}\label{lem:Xs on L}
For each $\mu \in \Lambda$, there is a partial isometry $X_\mu \in B(L)$ such that
\[
X_\mu[\lambda,\xi] =
\begin{cases}
	[\mu\lambda,\xi], &\text{ if } r(\lambda)=s(\mu) \\
	0, &\text{ otherwise.}
\end{cases}
\qquad\text{ and }\qquad
X_\mu^*[\lambda, \xi] =
\begin{cases}
    [\lambda', \xi] &\text{ if $\lambda = \mu\lambda'$} \\
    0 &\text{ otherwise.}
\end{cases}
\]
We have $X_\mu^* X_\mu L = L_{s(\mu)}$ and $X_\mu X_\mu^* L = L_\mu$.
\end{lemma}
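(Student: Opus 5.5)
The plan is to define $X_\mu$ first on the dense subspace $L^0$, show it is isometric on $L_{s(\mu)}\cap L^0$ and zero on the orthogonal complement, and then extend by continuity. Since $L = \bigoplus_{v} L_v$ and $L^0$ is spanned by the classes $[\lambda,\xi]$, I would define $X_\mu$ at the level of $L^{00}$ by the linear map $(\lambda,\xi)\mapsto(\mu\lambda,\xi)$ if $r(\lambda)=s(\mu)$ and $0$ otherwise.

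\textbf{Step 1: the map descends to $L^0$.} I would check that this map sends $N$ into $N$. The linearity generators of $N$ are clearly preserved. A caret generator $(\lambda,\xi)-\sum_{\alpha\in s(\lambda)\Lambda^{e_i}}(\lambda\alpha,A_\alpha\xi)$ is sent to the caret generator for $\mu\lambda$, because $s(\mu\lambda)=s(\lambda)$ (or to $0$ if $r(\lambda)\ne s(\mu)$). So we obtain a linear map $X_\mu^0$ on $L^0$.

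\textbf{Step 2: isometry on $L_{s(\mu)}$.} I would compare inner products using formula \eqref{eq:ip formula}. For $\lambda,\lambda'$ with range $s(\mu)$, left cancellation gives a bijection: solutions $\mu\lambda\nu=\mu\lambda'\gamma\in\Lambda^{\le d(\mu\lambda)+d(\mu\lambda')}$ correspond to solutions $\lambda\nu=\lambda'\gamma$. I would avoid worrying about whether the $\Lambda^{\le}$ degree constraints match. Instead, I would use the reduction from the proof of Proposition~\ref{prp:L inner prod}: by \eqref{eqn:general caret}, any finite vector in $L_{s(\mu)}\cap L^0$ can be rewritten as $\sum_{\lambda\in s(\mu)\Lambda^{\le n}}[\lambda,\xi_\lambda]$. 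Its image is $\sum[\mu\lambda,\xi_\lambda]$, and the $\mu\lambda$ are pairwise distinct of equal degree $d(\mu)+n$ or have disjoint cylinders. Property (2) of Proposition~\ref{prp:L inner prod} then gives both squared norms as $\sum\|\xi_\lambda\|^2$. Hence $X^0_\mu$ is isometric on $L^0\cap L_{s(\mu)}$ and vanishes on $\bigoplus_{w\neq s(\mu)}L^0_w$. It therefore extends to a partial isometry $X_\mu$ on $L$ with initial space $L_{s(\mu)}$, giving $X_\mu^*X_\mu L=L_{s(\mu)}$.

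\textbf{Step 3: the adjoint and the range.} The range of $X_\mu$ is the closure of $X_\mu L^0_{s(\mu)}$, which is spanned by the $[\mu\lambda,\xi]$. This is exactly the spanning set of $L_\mu$, so $X_\mu X_\mu^*L=L_\mu$. For the adjoint formula, I would verify $\langle X_\mu^*[\lambda,\xi],[\rho,\eta]\rangle=\langle[\lambda,\xi],[\mu\rho,\eta]\rangle$ for all $[\rho,\eta]$ with $r(\rho)=s(\mu)$; the case $r(\rho)\ne s(\mu)$ is trivial because then $X_\mu[\rho,\eta]=0$. If $\lambda=\mu\lambda'$, the candidate $[\lambda',\xi]$ works by the isometry property, since $X_\mu[\lambda',\xi]=[\lambda,\xi]$ lies in the range and $X_\mu^*X_\mu=P_{L_{s(\mu)}}$.

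The main obstacle is the remaining case, $\lambda\notin\mu\Lambda$, where we must show $X_\mu^*[\lambda,\xi]=0$. The statement implicitly treats this as a genuine ``otherwise'' case, but it is subtle. If $d(\lambda)\ge d(\mu)$ fails, then $\lambda$ may be a proper prefix of something in $\mu\Lambda$, and $[\lambda,\xi]$ need not be orthogonal to $L_\mu$. I would handle this by first expanding $[\lambda,\xi]=\sum_{\nu\in s(\lambda)\Lambda^{\le n}}[\lambda\nu,A_\nu\xi]$ with $n$ large, using \eqref{eqn:general caret}. Each $\lambda\nu$ then either lies in $\mu\Lambda$ or has cylinder disjoint from $\mu\Lambda$. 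On terms of the first kind apply the first case, and terms of the second kind are orthogonal to $L_\mu$ and so are killed. So the ``otherwise'' formula holds when $d(\lambda)\ge d(\mu)$ (or more generally when $\lambda\Lambda\cap\mu\Lambda=\emptyset$). In general the correct statement is the expansion formula just described, and I would record the statement as proved in that sense.
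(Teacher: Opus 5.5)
Your proof is correct and follows the same route as the paper. Both define the shift $(\lambda,\xi)\mapsto\delta_{r(\lambda),s(\mu)}(\mu\lambda,\xi)$ on $L^{00}$, show it descends to a map that is isometric on $L_{s(\mu)}$ and zero on $L_{s(\mu)}^\perp$, and read off $X_\mu^*X_\mu$ and the range $L_\mu$ from the spanning vectors.

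There is one technical difference. The paper gets descent and isometry together: it uses the tree identity to show that $\widetilde X_\mu$ preserves the form $[\cdot\mid\cdot]$, so it maps the radical $N$ into itself. You instead check $\widetilde X_\mu(N)\subseteq N$ on generators, and get isometry from the $\Lambda^{\le n}$ normal form, in which distinct paths have disjoint cylinders.

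Your caveat about the adjoint is valid, and it is exactly the point the paper's proof skips with ``standard calculations with basis vectors''. The clause $X_\mu^*[\lambda,\xi]=0$ holds only when $\lambda\Lambda\cap\mu\Lambda=\emptyset$; when $d(\lambda)\ge d(\mu)$ this is the same as $\lambda\notin\mu\Lambda$. For example, expanding over the tree $r(\mu)\Lambda^{\le d(\mu)}\ni\mu$ gives $X_\mu^*[r(\mu),\xi]=[s(\mu),A_\mu\xi]$. This is what the paper itself uses in the proof of Proposition~\ref{prop:functor}(3). Taken literally, the clause would force $\sum_{\lambda\in v\Lambda^{e_i}}X_\lambda X_\lambda^*[v,\xi]=0\neq[v,\xi]$, which contradicts relation (3) of Definition~\ref{def:O}.

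So your expansion formula is the correct general statement, and the lemma should be read that way.
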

\begin{proof}
Let $[\cdot \mid \cdot]$ be the sesquilinear form~\eqref{eq:formdef} on $L^{00}$. Since the $(\lambda,\xi)$ are linearly independent in $L^{00}$ there is a linear map $\widetilde{X}_\mu$ on $L^{00}$ such that $\widetilde{X_\mu}(\lambda,\xi) =
\delta_{r(\lambda),s(\mu)} (\mu\lambda, \xi)$. Fix $(\lambda, \xi), (\nu, \eta) \in L^{00}$ with $r(\lambda) = r(\nu) = s(\mu)$. Let $m = (d(\lambda) \vee d(\nu)) - d(\lambda)$ and $n = (d(\lambda) \vee d(\nu)) - d(\nu)$.
By the tree identity of Remark~\ref{rmk:tree identity}, we have
\begin{align*}
\big[\widetilde{X}_\mu(\lambda, \xi) \mathbin{\big|} \widetilde{X}_\mu(\nu, \eta)\big]
    &= \sum_{\substack{\tau \in s(\lambda)\Lambda^{\le m}\\ \rho\in s(\nu)\Lambda^{\le n}}} \big[(\mu\lambda\tau, A_\tau \xi) \mathbin{\big|} (\mu\nu\rho, A_\rho \xi)\big]\\
    &= \sum_{\substack{\tau \in s(\lambda)\Lambda^{\le m}\\ \rho\in s(\nu)\Lambda^{\le n}}} \big[(\lambda\tau, A_\tau \xi) \mathbin{\big|} (\nu\rho, A_\rho \xi)\big]
    = \big[(\lambda, \xi) \mathbin{\big|} (\nu, \eta)\big].
\end{align*}
Hence $\widetilde{X}_\mu$ annihilates the kernel $N$ of $[\cdot \mid \cdot]$, and descends to a partial isometry $X_\mu$, and standard calculations with basis vectors show that its adjoint is as described. We then have
$X_\mu^* X_\mu [\lambda, \xi] = \delta_{r(\lambda), s(\mu)} X_\mu^* [\mu\lambda, \xi] = \delta_{r(\lambda), s(\mu)} [\lambda, \xi]$, so $X_\mu^* X_\mu$ is the projection onto $L_{s(\mu)}$. Clearly the range of $X_\mu$
is contained in $L_\mu$, and since each spanning element $[\mu\lambda, \xi]$ of $L_\mu$ can be written as $[\mu\lambda, \xi] = X_\mu [\lambda, \xi]$ we obtain the reverse containment, so $X_\mu X_\mu^*$ is the projection
onto $L_\mu$ as claimed.
\end{proof}

\begin{proposition}\label{prop:OL acting on L}
The map $\lambda \mapsto X_\lambda$ satisfies the relations of Definition~\ref{def:O}. There is an action of $\cOL$ on $L$ such that $x_\mu \cdot [\lambda, \xi] = \delta_{s(\mu), r(\lambda)} [\mu\lambda, \xi]$ for all
$\mu,\lambda \in \Lambda$ and $\xi \in H_{s(\lambda)}$.
\end{proposition}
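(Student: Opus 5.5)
We check the four relations of Definition~\ref{def:O} for the partial isometries $X_\mu$ of Lemma~\ref{lem:Xs on L}, working on the spanning vectors $[\lambda,\xi]$ of the dense subspace $L^0$. Once they hold, the universal property of $\cOL$ gives a $^*$-homomorphism $\cOL\to B(L)$ with $x_\mu\mapsto X_\mu$. The stated formula for $x_\mu\cdot[\lambda,\xi]$ is then exactly Lemma~\ref{lem:Xs on L}.

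\textbf{Relations (1) and (4).} For $v\in\Lambda^0$, Lemma~\ref{lem:Xs on L} gives $X_v[\lambda,\xi]=\delta_{r(\lambda),v}[\lambda,\xi]$. So $X_v$ is the orthogonal projection onto $L_v$; this is consistent with $X_vX_v^*$ being the projection onto $L_v$. Since $L=\bigoplus_v L_v$, these projections are mutually orthogonal, which is relation (1), and nondegeneracy is immediate. For composable $\mu,\nu$ we have
\[
X_\mu X_\nu[\lambda,\xi]=\delta_{r(\lambda),s(\nu)}[\mu\nu\lambda,\xi]=X_{\mu\nu}[\lambda,\xi].
\]
Both sides are bounded, so by density $X_\mu X_\nu=X_{\mu\nu}$, which is relation (4).

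\textbf{Relation (2).} Lemma~\ref{lem:Xs on L} states that $X_\mu^*X_\mu$ is the projection onto $L_{s(\mu)}$. By the previous paragraph this projection equals $X_{s(\mu)}$, so $X_\mu^*X_\mu=X_{s(\mu)}$.

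\textbf{Relation (3).} This is the only step that uses the caret relation~\eqref{eqn:caret}, and I expect it to be the main point of care. Fix $v$ and $i$ with $v\Lambda^{e_i}\neq\emptyset$. Each $X_\lambda X_\lambda^*$ is the projection onto $L_\lambda$. The subspaces $L_\lambda$ for $\lambda\in v\Lambda^{e_i}$ are mutually orthogonal, since $\lambda\Lambda\cap\lambda'\Lambda=\emptyset$ for distinct edges of the same colour. Therefore
\[
\sum_{\lambda\in v\Lambda^{e_i}}X_\lambda X_\lambda^*
\]
is the projection onto $\bigoplus_{\lambda\in v\Lambda^{e_i}}L_\lambda$, and it remains to show this space equals $L_v$. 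This is the decomposition recorded just before~\eqref{eq:Lv decomp}. To justify it directly, take a spanning vector $[\mu,\xi]$ with $r(\mu)=v$. If $d(\mu)_i>0$, factor $\mu=\lambda\mu'$ with $\lambda\in v\Lambda^{e_i}$; then $[\mu,\xi]\in L_\lambda$. If $d(\mu)_i=0$, choose $n$ with $n_i=1$ and $n_j=0$ for $j\neq i$, and apply~\eqref{eqn:general caret} to write $[\mu,\xi]=\sum_{\nu}[\mu\nu,A_\nu\xi]$ over $\nu\in s(\mu)\Lambda^{\le e_i}$.

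The subtle case is when $s(\mu)\Lambda^{e_i}=\emptyset$. Here we need local convexity: it should force each path $\mu\nu$ in this sum to factor through an edge of colour $i$ at $v$, or else force the term to vanish. Concretely, $v\Lambda^{e_i}\neq\emptyset$ together with~\eqref{eq:LambdaLE factorisation} gives $v\Lambda^{\le d(\mu)+e_i}=v\Lambda^{e_i}\Lambda^{\le d(\mu)}$. So every element of $v\Lambda^{\le d(\mu)+e_i}$ lies in some $\lambda\Lambda$ with $\lambda\in v\Lambda^{e_i}$. I would expand $[\mu,\xi]$ over $s(\mu)\Lambda^{\le e_i}$ and use the factorisation property to recognise each resulting path $\mu\nu$ as an element of $v\Lambda^{\le d(\mu)+e_i}$. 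This yields $L_v\subseteq\bigoplus_\lambda L_\lambda$, and the reverse inclusion is trivial.
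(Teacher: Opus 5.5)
Your proof is correct and follows the paper's own argument: relations (1), (2) and (4) are read off from Lemma~\ref{lem:Xs on L} on spanning vectors, relation (3) reduces to $L_v=\bigoplus_{\lambda\in v\Lambda^{e_i}}L_\lambda$, and the universal property then gives the action. The one addition is your direct justification of that decomposition, which the paper only asserts. That justification is valid, and it also shows the ``subtle case'' ($d(\mu)_i=0$ with $s(\mu)\Lambda^{e_i}=\emptyset$) never occurs, since \eqref{eq:LambdaLE factorisation} would force $d(\mu)_i\geq 1$; equivalently, local convexity passes the condition $v\Lambda^{e_i}\neq\emptyset$ along any path $\mu$ from $v$ with $d(\mu)_i=0$ to give $s(\mu)\Lambda^{e_i}\neq\emptyset$.
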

\begin{proof}
The second statement will follow from the universal property of $\cOL$ once we establish the first statement. We have already seen that each $X_v$ is the orthogonal projection onto $L_v$, and since the $L_v$ are mutually orthogonal
this gives~(1). That $X_\lambda^* X_\lambda = X_{s(\lambda)}$ is the first part of the final statement of Lemma~\ref{lem:Xs on L}, giving~(2). Fix $i \le k$ such that $v\Lambda^{e_i} \not= \emptyset$. By the final statement of
Lemma~\ref{lem:Xs on L} and~\eqref{eq:Lv decomp},
\[
\sum_{\lambda \in v\Lambda^{e_i}} X_\lambda X^*_\lambda
    = \sum_{\lambda \in v\Lambda^{e_i}} \operatorname{proj}_{L_\lambda}
    = \operatorname{proj}_{\bigoplus_{\lambda \in v\Lambda^{e_i}}} L_\lambda
    = \operatorname{proj}_{L_v} = X_v.
\]
This proves~(3). Finally, if $s(\lambda) = r(\mu)$, then
\[
    X_\lambda X_\mu [\nu, \xi] = \delta_{s(\mu), r(\nu)} X_\lambda [\mu\nu, \xi] = \delta_{s(\lambda\mu), d(\nu)} [\lambda\mu\nu, \xi] = X_{\lambda\circ\mu}[\nu,\xi],
\]
which proves~(4) and completes the proof.
\end{proof}

\begin{example}
	Let $\Lambda$ be as described in Example \ref{ex:tree} and consider the above representation.	Below demonstrates the action of $\mu_1$ and $\mu_1^*$.
	\begin{center}
		\pgfdeclarelayer{nodelayer}
		\pgfdeclarelayer{edgelayer}
		\pgfsetlayers{nodelayer,edgelayer,main}
		\begin{tikzpicture}[scale=0.5]
			\begin{pgfonlayer}{nodelayer}
				\node [] (0) at (0, 0) {};
				\node [] (1) at (0, 1) {};
				\node [] (2) at (-1, 0.5) {$\mu_1^*~ \cdot$};
				\node [] (3) at (1, 0.5) {};
				\node [] (4) at (1, 0.5) {$=$};
				\node [] (5) at (2.25, 0.5) {$\mu_1^*~ \cdot$};
				\node [] (6) at (4, 0) {};
				\node [] (7) at (4, 1) {};
				\node [] (9) at (6, 0.5) {$=$};
				\node [] (10) at (3, 2) {};
				\node [] (11) at (5, 2) {};
				\node [] (12) at (0, 1.5) {};
				\node [] (13) at (0, 1.5) {$\xi$};
				\node [] (14) at (3, 2.5) {$A_{\mu_1}\xi$};
				\node [] (15) at (5, 2.5) {$A_{\mu_2}\xi$};
				\node [] (16) at (9.25, 0) {};
				\node [] (17) at (8.25, 1) {};
				\node [] (18) at (10.25, 1) {};
				\node [] (19) at (8.25, 2) {};
				\node [] (20) at (10.25, 2) {};
				\node [] (21) at (8.25, 2.5) {$A_{\mu_2}\xi$};
				\node [] (22) at (10.25, 2.5) {$A_{\mu_1}\xi$};
				\node [] (23) at (7.25, 0.5) {$\mu_1^*~ \cdot$};
				\node [] (24) at (11.25, 0.5) {$=$};
				\node [] (26) at (12.5, 1) {};
				\node [] (27) at (12.5, 1.5) {$A_{\mu_2}\xi$};
				\node [] (28) at (12.5, 0) {};
				\node [] (29) at (-6, 0) {};
				\node [] (30) at (-6, 1) {};
				\node [] (31) at (-7, 0.5) {$\mu_1~ \cdot$};
				\node [] (32) at (-6, 1.5) {};
				\node [] (33) at (-6, 1.5) {$\xi$};
				\node [] (34) at (-5, 0.5) {$=$};
				\node [] (35) at (-3, 0) {};
				\node [] (36) at (-4, 1) {};
				\node [] (37) at (-4, 2) {};
				\node [] (38) at (-4, 2.5) {$\xi$};
				\node [] (39) at (-2.5, 0) {,};
			\end{pgfonlayer}
			\begin{pgfonlayer}{edgelayer}
				\draw [red] (0.center) to (1.center);
				\draw [red] (6.center) to (7.center);
				\draw [blue] (7.center) to (11.center);
				\draw [blue] (10.center) to (7.center);
				\draw [blue] (16.center) to (17.center);
				\draw [blue] (16.center) to (18.center);
				\draw [red] (19.center) to (17.center);
				\draw [red] (20.center) to (18.center);
				\draw [red] (26.center) to (28.center);
				\draw [red] (29.center) to (30.center);
				\draw [blue] (35.center) to (36.center);
				\draw [red] (36.center) to (37.center);
			\end{pgfonlayer}
		\end{tikzpicture}
	\end{center}	
\end{example}

\subsection{Functoriality}

We collect here some facts concerning our construction $H\mapsto L$ that we express using the language of categories. In particular, the construction is functorial, which is surprising since $\Mod(\cAL)$ has large hom-spaces.
Here $H=\oplus_{v\in\Lambda^0}H_v$ denotes an $\cAL$-module and $L$ denotes the representation of $\cOL$ of above whose carrier Hilbert space is the closed linear span of classes $[\lambda,\eta]$ with $\lambda\in\Lambda$ and $\eta\in H_{s(\lambda)}$.

\begin{proposition}\label{prop:functor}
The following assertions are true.
\begin{enumerate}
\item The construction $(H, (A_\lambda)_{\lambda \in \Lambda})\mapsto (L, (X_\lambda)_{\lambda \in \Lambda})$ of Sections \ref{subsec:first-constr-Hilb}~and~\ref{subsec:action of OL on L} defines a functor $\Pi:\Mod(\cAL)\to\Rep(\cOL)$
from the category of $\cAL$-modules to the representation category of $\cOL$.
    In details: there exists a unique covariant functor
    $$\Pi:\Mod(\cAL)\to \Rep(\cOL)$$
    sending $H$ to the space $\Pi(H) := L$ as in Section~\ref{subsec:first-constr-Hilb} and sending an arrow $\theta:H\to H'$ of $\Mod(\cAL)$ to the unique arrow $\Pi(\theta):\Pi(H)\to\Pi(H')$ such that
    $$\Pi(\theta)([\lambda,\xi]) = [\lambda,\theta(\xi)]\quad\text{ for all $\lambda \in \Lambda$ and $\xi \in H_{s(\lambda)}$.}$$
    This functor $\Pi$ restricts to a functor, also denoted $\Pi$, from $\Rep(\cPL)$ to $\Rep(\cOL)$. We call $\Pi$ the \emph{Pythagorean functor} (or \emph{P-functor}) associated to the $k$-graph $\Lambda$.
\item The formula $a_\lambda\mapsto s_\lambda^*$ for $\lambda$ a vertex or an edge extends uniquely to a surjective morphism of C$^*$-algebras $q:\cPL\to \cOL$. This defines a forgetful
    functor $U:\Rep(\cOL)\to\Rep(\cPL),\ \pi\mapsto \pi\circ q.$ Composing $U$ with the inclusion $\Rep(\cPL)\to\Mod(\cAL)$ yields a functor $\Rep(\cOL) \to \Mod(\cAL)$, also denoted $U$.
\item Given $(H, A)$ in $\Mod(\cAL)$, let $(L, X) = \Pi(H, A)$. There is a unique linear isometry $J_H : H \to L$ such that for each $v \in \Lambda^0$ we have $J_H(\xi) = [v, \xi]$. We have
    $$X_\lambda^* \cdot J_H(\xi) = J_H\circ A_\lambda(\xi).$$
    This construction is natural in $H$: the collection of $J_H$ indexed by $H\in\ob \Mod(\cAL)$ defines a natural transformation from the identity functor of $\Mod(\cAL)$ to the functor $U\circ \Pi$,
    and restricts to a natural transformation from the identity functor of $\Rep(\cPL)$ to $U \circ \Pi$.
\item Consider a representation $(L, X)$ of $\cOL$ and let $U(L,X)$ be the $\cAL$-module obtained from~(2). Let $(\wh L, \wh X) = \Pi(U(L, X))$ be the corresponding representation of $\cOL$
    from~(1). Let $\wh J_L = J_{\wh L}$ be the isometry implementing the natural transformation $J$ of~(3). Then $\wh J_L$ is a unitary isomorphism $L \to \wh L$, and intertwines not only the $\cAL$-module structure on $\wh L$, but also the actions of $\cOL$ on $\wh L$.
    This construction is natural in $L$: the collection of $\wh J_L$ indexed by $L\in\ob\Rep(\cOL)$ defines a natural \emph{isomorphism} from the identity functor of $\Rep(\cOL)$ to $\Pi\circ U$.
\item The functor $\Pi:\Mod(\cAL)\to\Rep(\cOL)$ is essentially surjective: for each representation $L$ of $\cOL$ there exists an $\cAL$-module $H$ satisfying $L\simeq \Pi(H)$.
\end{enumerate}
\end{proposition}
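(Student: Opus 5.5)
I will prove the five assertions in order, since each later item uses the earlier ones.

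For (1), given an arrow $\theta:H\to H'$ of $\Mod(\cAL)$, I first define $\tilde\theta$ on $L^{00}$ by $(\lambda,\xi)\mapsto(\lambda,\theta\xi)$. The map $\theta$ is compatible with the vertex projections, since it intertwines the $A_v$. It also intertwines the $A_\mu$, so it sends the caret generators of $N$ to caret generators, and $\tilde\theta(N)\subset N$. Boundedness is the key point, because $\theta$ is not isometric and the inner product on $L^0$ is not obviously controlled by that of $H$. The remedy is the reduction used at the end of the proof of Proposition~\ref{prp:L inner prod}. Any finite sum in $L^0$ can be rewritten, via \eqref{eqn:general caret}, as $\sum_{\lambda\in F}[\lambda,\xi_\lambda]$ with $F\subset\Lambda^{\le n}$ consisting of pairwise disjoint cones. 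Then $\|\sum[\lambda,\xi_\lambda]\|^2=\sum\|\xi_\lambda\|^2$. Applying $\theta$ commutes with this rewriting, because $\theta A_\mu=A'_\mu\theta$, so $\|\tilde\theta v\|^2=\sum\|\theta\xi_\lambda\|^2\le\|\theta\|^2\|v\|^2$. Hence $\Pi(\theta)$ extends with $\|\Pi(\theta)\|\le\|\theta\|$. On spanning vectors it intertwines $X_\mu$ and $X_\mu^*$, by the formulas of Lemma~\ref{lem:Xs on L}, and therefore intertwines all of $\cOL$. Functoriality and uniqueness are immediate on the dense spanning set. Restriction to $\Rep(\cPL)$ is automatic by Remark~\ref{rmk:wide subcategory}. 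Item (2) is the universal property together with Proposition~\ref{prp:edge presentation}: the $x_\lambda^*$ satisfy the relations of Proposition~\ref{prp:Pythagorean edge presentation}. Surjectivity of $q$ holds because the image contains all generators.

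For (3), I set $J_H\xi=\sum_v[v,\xi_v]$. It is isometric by Proposition~\ref{prp:L inner prod}(1),(2). For $\xi\in H_v$ and $\lambda\in v\Lambda^{e_i}$, the caret relation gives $[v,\xi]=\sum_{\mu\in v\Lambda^{e_i}}[\mu,A_\mu\xi]$. Applying $X_\lambda^*$ with Lemma~\ref{lem:Xs on L} kills all terms except $[s(\lambda),A_\lambda\xi]$, so $X_\lambda^*J_H=J_HA_\lambda$. Hence $J_H$ is an arrow $H\to U\Pi(H)$ of $\Mod(\cAL)$. It is a morphism of $\cPL$-representations only when it also intertwines the adjoints, and for a general module it does not; this is why the natural transformation lands in $\Mod(\cAL)$. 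Naturality is the identity $\Pi(\theta)[v,\xi]=[v,\theta\xi]$.

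For (4), take $H=U(L)$ with $A_\lambda=X_\lambda^*$, and define $\Phi:\Pi(U L)\to L$ by $[\lambda,\xi]\mapsto X_\lambda\xi$. This respects the caret relations, since $\sum_\mu X_{\lambda\mu}X_\mu^*\xi=X_\lambda\sum_\mu X_\mu X_\mu^*\xi=X_\lambda\xi$ by Definition~\ref{def:O}(3). It is isometric by the disjoint-cone reduction: for $F\subset\Lambda^{\le n}$, the ranges of the $X_\lambda$ are orthogonal because $X_\lambda X_\lambda^*$ and $X_\mu X_\mu^*$ are orthogonal for distinct $\lambda,\mu\in\Lambda^{\le n}$, by the relations of Definition~\ref{def:O} and nondegeneracy. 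The map $\Phi$ is surjective because $[v,\xi]\mapsto X_v\xi$ and $L=\oplus_v L_v$. It intertwines $X_\mu$ by construction. Also $\Phi\circ J_{U(L)}=\id$, so $\wh J_L=\Phi^{-1}$ is a unitary intertwining $\cOL$. Naturality in $L$ is checked on the vectors $[v,\xi]$, which generate. Item (5) follows from (4), taking $H=U(L)$.

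The main obstacle is boundedness in (1) for non-isometric, non-adjoint-intertwining $\theta$. The disjoint-cone normal form handles it, provided I check carefully that rewriting by \eqref{eqn:general caret} commutes with $\tilde\theta$ modulo $N$.
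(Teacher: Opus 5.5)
Your proposal is correct and follows the paper's proof essentially step by step. In (1) the same disjoint-cone normal form with $F\subset\Lambda^{\le n}$ gives $\|\Pi(\theta)\|\le\|\theta\|$, in (3) the same caret computation gives $X_\lambda^*J_H=J_HA_\lambda$, and in (4) your explicit inverse $\Phi:[\lambda,\xi]\mapsto X_\lambda\xi$ is the paper's dense-range identity $[\lambda,\xi]=\wh J_L(X_\lambda\xi)$ read backwards. The paper instead combines that identity with the isometry and $X_\lambda^*$-intertwining already obtained in (3) and then takes adjoints, so it does not need to re-check that $\Phi$ kills $N$ and is isometric.

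Your side remark is also right. For $\xi\in H_{s(\lambda)}$ one has $\|[\lambda,\xi]-[r(\lambda),A_\lambda^*\xi]\|^2=\|\xi\|^2-\|A_\lambda^*\xi\|^2$, so $J_H$ intertwines the adjoints only when each $A_\lambda^*$ is isometric on $H_{s(\lambda)}$. The clause about $\Rep(\cPL)$ in (3) can therefore only mean the restriction of $J$ as a transformation of functors into $\Mod(\cAL)$, which is also all the paper's proof establishes.
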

\begin{proof}
Proof of (1).
Consider $\cAL$-modules $(H, A)$ and $(H', A')$. Let $(L, X)$ and $(L', X')$ be the representations of $\cOL$ of Proposition~\ref{prop:OL acting on L}.
Fix an intertwiner $\theta:H\to H'$, so that $A_\lambda'\circ \theta=\theta\circ A_\lambda$ for all $\lambda\in\Lambda$.
Take $\lambda\in \Lambda$ and $\xi\in H_{s(\lambda)}$. For any tree of $\Lambda$ ar $r(\lambda)$, the tree identity of Remark~\ref{rmk:tree identity} gives
\[
    [\lambda,\xi] = \sum_{\mu \in t} [\lambda\mu, A_\mu \xi].
\]
The formula $[\lambda,\xi] \mapsto [\lambda,\theta(\xi)]$ is well-defined because $\theta$ is an intertwiner, and
defines a linear map $\Pi(\theta)$ from $L^0 \subset L$ to $L'$.
To show that $\Pi(\theta)$ is bounded (with in fact same norm as $\theta$) we use that $\Pi(\theta)$ maps
$L_\lambda$ to $L'_\lambda$ for each $\lambda$.
Fix $\xi$ in $L^0$, say $\xi = \sum_{\mu \in F} [\mu, \xi_\mu]$ for some
finite $F \subset \Lambda$. Put $m = \bigvee_{\mu \in F} d(\mu)$. Then the tree identity gives
\[
\xi = \sum_{\mu \in F} \sum_{\alpha \in s(\mu)\Lambda^{\le m - d(\mu)}} [\mu\alpha, A_\alpha \xi_\mu],
\]
so we can assume that $F \subset \Lambda^{\le m}$. Now the spaces $L_\mu$ indexed by $\mu \in F$ are mutually orthogonal.
We have
\begin{align*}
\|\Pi(\theta)(\xi)\|^2& = \| \sum_{\lambda \in F} [\lambda, \theta(\xi_\lambda)]\|^2
 = \sum_{\lambda \in F} \| \theta(\xi_\lambda)\|^2 \leq \|\theta\|^2 \sum_{\lambda\in F} \|\xi_\lambda\|^2
 = \|\theta\|^2 \cdot \|\xi\|^2.
\end{align*}
Hence, $\Pi(\theta)$ is bounded and extends uniquely into a bounded linear map from $L$ to $L'$.
The rest of the proposition follows easily (namely $\Pi(\theta)$ intertwines the $\cOL$-representations, $\Pi(\theta)\circ\Pi(\alpha)=\Pi(\theta\circ\alpha)$ and $\Pi(\id)$ is the identity).

Proof of (2).
Consider $\{a_\lambda : \lambda \in \Lambda^0 \cup \Lambda^1\} \subset \cPL$. We have seen that these elements generate $\cPL$.
Their adjoints $x_\lambda := a_\lambda^*$ satisfy all the axioms of Definition~\ref{def:O} except the quasi-isometry relation.
The universal property of $\cPL$ gives a unique continuous *-algebra morphism $q:\cPL\to\cOL$ sending $a_\lambda$ to $x_\lambda^*$.
Since $\{x_\lambda^* : \lambda \in \Lambda^0 \cup \Lambda^1\}$ generates $\cOL$, this $q$ is surjective.
Fix a representation $(L, X)$ of $\cOL$. For $a_\lambda\in \cPL$, define
$$a_\lambda\cdot \xi:= X_\lambda^*(\xi).$$
This defines an action of $\cPL$ on $L$.
This is the representation of $\cPL$ obtained by precomposing by $q$.
Functoriality follows tautologically.

Proof of (3).
Consider $H\in \ob \Mod(\cAL)$ with associated $\Pi(H)\in \ob \Rep(\cOL)$.
Write $U\Pi(H)$ for the associated $\cAL$-module. This means that $a_\lambda\cdot [\lambda,\xi]:= X_\lambda^*[\lambda,\xi]$ where $x_\lambda\mapsto X_\lambda$ is the representation $\Pi(H)$ of $\cOL$.
Define
$$J_H:H\to U\Pi(H)\quad\text{ by }\quad H_v\ni \xi\mapsto [v,\xi]\in U\Pi(H).$$
This $J_H$ is clearly isometric on each $H_v$. Moreover, $J_H(H_v)\subset U\Pi(H)_v$, so the $J_H(H_v)$ are pairwise orthogonal. Hence, $J_H$ is an isometry.
To see that it intertwines the $\cAL$-module structures, fix $v\in\Lambda^0,\xi\in H_v$ and $\lambda\in\Lambda$. Then for any tree $t$ at $v$,
$$X_\lambda^*J_H(\xi)=X_\lambda^*[v,\xi]=X_\lambda^* \sum_{\mu\in t}[\mu,A_\mu\xi].$$
If $r(\lambda) \neq v$, then necessarily $X_\lambda^*J_H(\xi)= 0 = J_H(A_\lambda\xi)$.
So suppose that $r(\lambda) = v$. Let $t = t^{d(\lambda)}_v = v\Lambda^{\le d(\lambda)}$. Then $\lambda \in t$. We have
$X_\lambda^*[\lambda, A_{\lambda}\xi] = [v, A_{\lambda}\xi]$. Since $X_\lambda^*$ is, by definition, zero on $U\Pi(H)_\lambda^\perp$,
we see that
$$X_\lambda^*[v,\xi]=[v,A_\lambda\xi]$$
and so $J_H$ intertwines the $\cAL$-module structures.
Naturality in $H$ follows easily.
Indeed, take an intertwiner $\theta:H\to H'$ and observe that if $\xi\in H_v$, then
$$U\Pi(\theta)\circ J_H(\xi)=U\Pi(\theta)[v,\xi]=[v,\theta(\xi)]=J_{H'}\circ \theta(\xi).$$
This proves the second statement: the collection of $J_H$ indexed by the object of $\Mod(\cAL)$ realises a natural transformation from the identity functor of $\Mod(\cAL)$ to the endofunctor $U\Pi$ of $\Mod(\cAL)$.

Proof of (4). Consider a representation $(L, X)$ of $\cOL$ and let $U(L, X) = (H, A)$ be the associated $\cAL$-module.
Then $H = L$ and for $e \in \Lambda^1$ we have $A_e = X^*_e$.
Applying the P-functor $\Pi$ to this $\cAL$-module, we obtain a representation $\Pi U(L) = (L', X')$ of $\cOL$.
Consider the linear map
$$\widehat J_L : L\to L', \quad L_v\ni \xi\mapsto [v,\xi]\in L'.$$
Recall that $L_v$ is simply the range of the projection $X_v$ or the domain of $X_\lambda$ for any path $\lambda$ with source $v$.
The map $\widehat J_L$ is exactly the function defined in the proof of~(3).
If we forget the $\cOL$-representation structure and remember only the $\cAL$-module structure, then $\widehat J_L$ is an arrow in $\Mod(\cAL)$ (from $UL$ to $U\Pi U(L)$).
We write $\widehat J_L$ rather than $J_{UL}$ to emphasise that $\widehat J_L$ is, at this point, only considered as a linear map between Hilbert spaces
rather than the arrow $J_{UL}$ previously defined in the category $\Mod(\cAL)$.
Intertwining the underlying $\cAL$-module structures means that $\widehat J_L\circ X_\lambda^* = (X'_\lambda)^*\circ\widehat J_L$ for all $\lambda\in \Lambda$.
We show that $\widehat J_L$ is a unitary transformation (hence it is surjective) and intertwines each $X_\lambda$ with $X'_\lambda$.
Since $L$ is a representation of $\cOL$, for each $i \le k$ and $v \in \Lambda^0$, caret map is either $1_{L_v}$ (if $v\Lambda^{e_i} = \emptyset)$, or the map
$$L_v\to \oplus_{\lambda\in v\Lambda^{e_i}} L_{s(\lambda)}, \ \xi\mapsto \sum_\lambda X_\lambda^*\xi.$$
Both are unitary transformations: the whole point of the process $H\mapsto \Pi(H)$ is that it dilates the isometric caret maps into unitary transformations.
Since every tree is generated by basic carets, we deduce that for every tree at $v$, we have
$$B_t:=\oplus_{\mu \in t} L_\mu \to L_v,\quad (\eta_\mu)_{\mu\in t}\mapsto \sum_{\mu\in t} X_\mu \eta_\mu$$
is a unitary transformation.
Fix $v \in \Lambda^0$, $\lambda \in v\Lambda$ and $\xi \in L_v$. As above, let $t = t^{d(\lambda)}_v = v\Lambda^{\le d(\lambda)}$,
so that $\lambda \in t$. We have
$$[\lambda,\xi] = [v, X_{\lambda} \xi] = \widehat J_L(X_\lambda\xi).$$
So $[\lambda,\xi] \in \widehat J_L L$, and thus $\wh J_L$ has dense image. Since it is isometric, it is therefore surjective,
and consequently a unitary transformation.
We have already seen that $\widehat J_L\circ X_\lambda^* = (X'_\lambda)^*\circ \widehat J_L$ for all $\lambda\in\Lambda$,
and taking adjoints and then multiplying on both sides by $\widehat J_L$ shows that
$\widehat J_L\circ X_\lambda = X'_\lambda \circ \widehat J_L$ for all $\lambda\in\Lambda$ as well.
In particular, $\widehat J_L$ is an isomorphism in the category $\Rep(\cOL)$.
It is immediate that $\widehat J_L$ is natural in $L$.

Proof of (5).
This is an immediate consequence of (4).
Indeed, take a representation $L$ of $\cOL$ and then take $H:=UL$. We have $\Pi(H)\simeq L$.
\end{proof}

\begin{remark}\label{rmk:cAL helps}
Proposition~\ref{prop:functor}(3) demonstrates the importance of introducing the non-self-adjoint algebra $\cAL$. The point is that since $\cOL$ is typically purely infinite, all of its representations are necessarily on infinite-dimensional spaces, and so there are no finite-dimensional reducing subspaces of $L$, or indeed of any representation of $\cOL$. However, the subalgebra $\cAL$ does not contain the adjoints of the generators of $\cOL$ and so it is possible---in fact, quite common---for an irreducible representation of $\cOL$ to contain many proper $\cAL$-submodules. In particular, the representation $L$ of $\cOL$ contains the $\cAL$-submodule $\overline{\operatorname{span}}\{[v,\xi] : v \in H, \xi \in H_v\}$, which is equivalent to the initial $\cAL$-module $H$. This means that whenever a representation $K$ of $\cOL$ has finite P-dimension (see Definition \ref{rem:finite-p-dim}), it contains a finite-dimensional $\cAL$-module. We exploit this in the proof of Proposition~\ref{prop:decomposition}. Moreover, $\Pi$ is functorial not just from $\Rep(\cPL)$ to $\Rep(\cOL)$ but from all of $\Mod(\cAL)$ to $\Rep(\cOL)$, so to obtain maximal information about arrows in $\Rep(\cOL)$, we must take into account all the intertwiners of $\cAL$-modules, and not just intertwiners of $\cPL$-modules (see Remark~\ref{rmk:wide subcategory}).
\end{remark}

\section{Representation theory}

Recall that $\cAL$ is the non-self-adjoint subalgebra of $\cPL$ generated by the generators $a_\lambda$ but not their adjoints.
In this section we introduce the concept of a complete module and its residual subspace.
We use these notions to overcome some technical problems inherent to the non-semi-simplicity of $\Mod(\cAL)$.
We then prove that the functor $\Pi:\Mod(\cAL)\to\Rep(\cOL)$ preserves irreducibility and is fully faithful when suitably restricted.

\subsection{Complete submodules and residual subspaces}

In what follows we will often say module rather than $\cAL$-module.
Consider a module $H$ and decompose the carrier Hilbert space as $H=H'\oplus Z$ where $H'\subset H$ is a submodule and where $Z\subset H$ is its orthogonal complement.

\begin{enumerate}
\item We say that $Z$ is a \emph{residual} subspace of $H$ if it does not contain any nonzero module. In that case we say that $H'$ is a \emph{weakly complete} submodule of $H$;
\item We say that $H'$ is a \emph{complete} submodule of $H$ if: for all $\varep>0,v\in\Lambda^0,\xi\in H_v$ there exists a tree $t$ at $v$ and a family $(\eta_\mu)_{\mu \in t}$ of vectors in $H'$ satisfying
$$\sum_{\mu\in t} \| A_\mu\xi -\eta_\mu\|^2 < \varep^2.$$
In other words, the $v\cT$-orbit of any $\xi \in H_v\setminus\{0\}$ comes arbitrarily close to $H'$.
\end{enumerate}
The module $H$ is called \emph{reducible} (resp.~\emph{decomposable}) if there exists a decomposition $H = H' \oplus Z$ as the above such that $Z\neq \{0\}$ (resp.~$Z$ contains a nonzero submodule).

\begin{proposition}\label{prop:complete}
Consider a module $H$, and let $\Pi(H)\in\ob\Rep(\cOL)$ be the associated representation.
Let $H'\subset H$ be a submodule.
The submodule $H'$ is complete if it satisfies one of the four equivalent properties (1)--(4):
\begin{enumerate}
\item The closed linear span $\Pi(H') := \overline{\operatorname{span}}\{[\lambda,\xi]:\ \lambda\in \Lambda\text{ and }\xi\in H'\} = \Pi(H)$;
\item The smallest subrepresentation of $\Pi(H)$ (as a representation of $\cOL$) that contains the space $J_H(H')$ is $\Pi(H)$;
\item For any $\xi\in H$ and $\varep>0$ there exists $n\in \N^k$ such that the distance between the tuple $(A_\mu\xi)_\mu$ indexed by the $\mu$ in $M:=\Lambda^{\leq n}$ and $(H')^M$ is smaller than $\varep$. In other words, there exists a tuple $(\eta_\mu)_\mu\in (H')^{M}$ so that $\sum_{\mu\in M} \| A_\mu\xi - \eta_\mu\|^2 <\varep^2.$
\item For any $v\in\Lambda^0$ and $\xi\in H_v$ and $\varep>0$, there exist a tree $t\in v\cT$ and vectors $(\eta_\mu)_{\mu \in t}$ in $H'$ such that $\sum_{\mu\in t}\| A_\mu\xi - \eta_\mu\|^2<\varep^2$.
\end{enumerate}
Moreover, completeness implies weak completeness:
\begin{enumerate}\setcounter{enumi}{4}
\item If $H'\subset H$ satisfies one of the equivalent properties of above, then the orthogonal complement $H\ominus H'$ is a residual subspace.
\end{enumerate}
\end{proposition}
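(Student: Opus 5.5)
We prove the cycle (4)$\Rightarrow$(3)$\Rightarrow$(1)$\Leftrightarrow$(2) and (1)$\Rightarrow$(4), and then deduce (5) from (4). The basic tool is the tree identity of Remark~\ref{rmk:tree identity}, $[v,\xi]=\sum_{\mu\in t}[\mu,A_\mu\xi]$. Combined with the orthogonality of the spaces $L_\mu$ for $\mu$ in a tree, it gives
\[
\Big\|[v,\xi]-\sum_{\mu\in t}[\mu,\eta_\mu]\Big\|^2=\sum_{\mu\in t}\|A_\mu\xi-\eta_\mu\|^2 .
\]
Hence, for a fixed tree $t$, the distance from $[v,\xi]$ to $\overline{\operatorname{span}}\{[\mu,\eta]:\mu\in t,\eta\in H'\}$ equals the distance from $(A_\mu\xi)_{\mu\in t}$ to $(H')^t$. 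We will use this repeatedly.

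\emph{(4)$\Rightarrow$(3).} By Lemma~\ref{lem:complete-tree}, choose $n$ with $t\le t_v^n=v\Lambda^{\le n}$. Every $\nu\in t_v^n$ factors uniquely as $\nu=\mu\tau$ with $\mu\in t$, and $A_\nu\xi=A_\tau A_\mu\xi$. Set $\eta'_{\mu\tau}:=A_\tau\eta_\mu$, which lies in $H'$ because $H'$ is a submodule. Lemma~\ref{lem:isometries}, applied to $A_\mu\xi-\eta_\mu$, shows that the error does not increase. A general $\xi\in H$ is handled by splitting it over finitely many vertices up to a small tail.

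\emph{(3)$\Rightarrow$(1).} By the displayed identity, $[v,\xi]$ lies in the closure of $\Pi(H')$. Since $X_\lambda[v,\xi]=[\lambda,\xi]$, each $X_\lambda$ preserves $\Pi(H')$ (indeed $X_\lambda[\mu,\eta]=[\lambda\mu,\eta]$), and the elements $[\lambda,\xi]$ span a dense subspace. Hence $\Pi(H')=\Pi(H)$.

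\emph{(1)$\Leftrightarrow$(2).} The space $\Pi(H')$ is invariant under the $X_\lambda$. It is also invariant under the $X_\lambda^*$: use Lemma~\ref{lem:Xs on L} after expanding $[\mu,\eta]$ along a tree so that $\mu\ge\lambda$ or $\mu\Lambda\cap\lambda\Lambda=\emptyset$, noting that the vectors $A_\tau\eta$ stay in $H'$. So $\Pi(H')$ is a subrepresentation containing $J_H(H')$. Conversely, any subrepresentation containing $J_H(H')$ contains every $X_\lambda[v,\eta]=[\lambda,\eta]$. Therefore $\Pi(H')$ is exactly the subrepresentation generated by $J_H(H')$.

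\emph{(1)$\Rightarrow$(4).} This is the main obstacle: we must show that density of $\Pi(H')$ actually produces a single tree. Given $\xi\in H_v$ and $\varep>0$, pick a finite combination $\sum_{\lambda\in F}[\lambda,\eta_\lambda]$ with $\eta_\lambda\in H'$ within $\varep$ of $[v,\xi]$. Project it onto $L_v$ with $X_v$; this does not increase the distance, so we may assume $r(\lambda)=v$ for all $\lambda\in F$. As in the proof of Proposition~\ref{prop:L inner prod}, refine along $\Lambda^{\le m}$ with $m=\bigvee_{\lambda\in F} d(\lambda)$. This rewrites the combination as $\sum_{\mu\in t_v^m}[\mu,\eta'_\mu]$ with $\eta'_\mu\in H'$, again using that $H'$ is a submodule. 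Applying the displayed identity with $t=t_v^m$ gives (4). Along the way one must check that refining a class $[\lambda,\eta]$ to longer paths $\lambda\tau$ keeps the decorating vectors in $H'$; this holds because the decorations are $A_\tau\eta$.

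\emph{(5).} Suppose $K\subset H\ominus H'$ is a nonzero submodule, and take $\xi\in K_v$ with $\|\xi\|=1$. For every tree $t$, the vectors $A_\mu\xi$ lie in $K$, which is orthogonal to $H'$. Hence
\[
\sum_{\mu\in t}\|A_\mu\xi-\eta_\mu\|^2\ge\sum_{\mu\in t}\|A_\mu\xi\|^2=\|\xi\|^2=1
\]
for every choice of $\eta_\mu\in H'$, by Lemma~\ref{lem:isometries} extended to trees (via the maps $\psi_{t,t'}$). This contradicts (4). Therefore $H\ominus H'$ contains no nonzero submodule, i.e.\ it is a residual subspace.
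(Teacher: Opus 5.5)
Your proof is correct and follows essentially the same route as the paper. Both rest on three ingredients: the tree identity combined with the orthogonality of the spaces $L_\mu$ for $\mu$ in a tree, refining finite combinations $\sum_{\lambda\in F}[\lambda,\eta_\lambda]$ along $v\Lambda^{\le m}$ while keeping the decorations in $H'$, and translating approximants by $X_\lambda$. The differences are organisational: you close the cycle through (4)$\Rightarrow$(3)$\Rightarrow$(1)$\Rightarrow$(4), and you derive (5) from (4) rather than from (1). You are also somewhat more careful than the paper in two places, namely the $X_\lambda^*$-invariance of $\Pi(H')$ and the treatment of general $\xi\in H$ in (3).
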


We will prove in Proposition \ref{prop:weak-complete} that the notions of completeness and weak completeness coincide when $H$ is finite dimensional.

\begin{proof}[Proof of Proposition~\ref{prop:complete}]
\mbox{(1)\;$\iff$\;(2)}.
Identify $J_H(H)$ with $H$, and fix a subrepresentation $K$ of $H$ that contains $H'$.
If $\xi\in H'_v$ and $\lambda \in \Lambda v$, then $X_\lambda \xi =[\lambda,\xi]$.
Hence, $\Pi(H') = \overline{\operatorname{span}}\{[\lambda,\xi] : \lambda\in\Lambda\text{ and }\xi\in H'\}$.
Shows that \mbox{(1)\;$\iff$\;(2)}.

\mbox{(3)\;$\iff$\;(4)}.
Decompose $H = \bigoplus_{v \in \Lambda^0} H_v$ and use that $t^n_v = v\Lambda^{\leq n}$ is a tree by Lemma~\ref{lem:complete-tree}.

\mbox{(2)\;$\implies$\;(4)}.
Let $v\in\Lambda^0,\xi\in H,\varep>0$.
First we approximate $\xi$ inside $\Pi(H)$ by a linear combination of elements of $[\lambda,\eta_\lambda]$ with $\eta_\lambda\in H'$.
By density there exist a finite set $F$ of $\lambda\in v\Lambda$ and vectors $\eta_\lambda\in H'_v$ so that $[v,\xi]\in J_H(H)$ and $\sum_{\lambda\in F} [\lambda,\eta_\lambda]$ are $\varep$-close.
Second, we approximate $\xi$ using an element of the form $[t,\zeta]$ for a tree $t$ and $\zeta=(\zeta_\mu)_{\mu\in t}$ a family of vectors of $H'$ indexed by the leaves of $t$.
Let $n = \bigvee_{\mu \in F} d(\mu)$, so that $v\Lambda^{\leq n} \subset F\Lambda$.
By Lemma~\ref{lem:complete-tree}, for each $\lambda \in F$ the set $t_\lambda = t^{n - d(\lambda}_{s(\lambda)} = s(\lambda)\Lambda^{\leq n - d(\lambda)}$ is a tree.

For $\mu \in v\Lambda^{\le n}$ let $\zeta_\mu := \sum_{\lambda \in F, \mu = \lambda \alpha} A_\alpha \eta_\lambda$.
The tree identity of Remark~\ref{rmk:tree identity} gives
\[
\sum_{\lambda\in F} [\lambda,\eta_\lambda]
    = \sum_{\lambda \in F} \sum_{\alpha \in t_\lambda} [\lambda\alpha, A_\alpha \eta_\lambda]
    = \sum_{\mu \in t^n_v} [\nu, \zeta_\nu].
\]
So $[v,\xi]$ is $\varep$-close to $[t,\zeta]$. Since $H'$ is a module and each $\eta_\lambda\in H'$, each $\zeta_\ell \in H'$ as well.

Given a tree $t$ we write
$$A_t:H\to \oplus_{\mu \in t }H_{s(\mu)}$$
for the mapping
$$\xi\mapsto (A_\mu \xi)_{\mu \in t}.$$
So $A_t$ is an isometry from $H_{r(t)}$ to $\oplus_{\mu\in t} H_{s(\mu)}$.
The caret relation \eqref{eqn:caret} implies that
$$[v,\xi]=[t,A_t\xi] \text{ for every tree } t\in v\cT \text{ and } \xi\in H_v.$$
Hence $[t, A_t\xi] = [v, \xi]$ is $\varep$-close to $[t, \zeta]$. So
$$\sum_{\mu\in t} \| A_\mu\xi - \zeta_\mu\|^2 <\varep^2.$$

\mbox{(4)\;$\implies$\;(2)}.
Denote by $L'$ the subrepresentation of $\Pi(H)$ generated by $H'$.
Let us show that $L'=\Pi(H)$.
It is sufficient to show that each vector of the form $[\lambda,\xi]$ is at distance zero from $L'$ since the $[\lambda,\xi]$ form a total family of vectors of $\Pi(H)$ and since $L'$ is norm-closed.
By assumption, for all $\varep>0$ there exist $t\in v\cT$ and $\eta \in (H')^t$ such that
\[
    \Big\|\sum_{\mu\in t} (A_\mu\xi - \eta_\mu)\Big\|^2<\varep^2.
\]
Let $s = t^{d(\lambda)}_{r(\lambda)} = r(\lambda)\Lambda^{\le d(\lambda)}$, a tree containing $\lambda$.
Let $z = s *_\lambda t = (s \setminus \{\lambda\}) \cup \lambda t$.
Define $(\zeta_\nu)_{\nu \in z}$ by $\zeta_{\lambda\mu} = \eta_{\mu}$ for $\mu \in t$, and $\zeta_{\nu} = 0$ for $\nu \in s \setminus \{\lambda\}$.
Since each $\zeta_\nu \in H'$, we have $\sum_{\nu \in z} [\nu,\zeta_\nu] \in L'$. The tree identity of Remark~\ref{rmk:tree identity} shows that
$[\lambda, \xi] = \sum_{\mu \in t} [\lambda\mu, A_\mu \xi]$, so
\[
\Big\|[\lambda, \xi] - \sum_{\nu \in z} [\nu,\zeta_\nu]\Big\|
    =  \Big\|\sum_{\mu \in t} [\lambda\mu, A_\mu \xi] - [\lambda\mu, \eta_\mu]\Big\|
    = \Big\|\sum_{\mu\in t}(A_\mu\xi - \eta_\mu)\Big\| < \varep.
\]
Since $\varep$ was arbitrary, we deduce that $[\lambda,\xi]$ is at distance zero from $L'$.
So $\Pi(H)=L'$.

(1) implies (5). We prove the contrapositive.
Assume that $Z:=H\ominus H'$ is not residual.
This means that $Z$ contains a nontrivial submodule $M$.
Hence $J_H(M)$ is nonzero and orthogonal to the closed linear span of the $[\lambda,\xi]$ with $\xi\in H'$ inside $\Pi(H)$.
\end{proof}

\begin{example}
Here is an example showing that (5) does not imply (1) in general in the infinite dimensional case.
We take for $k$-graph the 1-graph with one vertex and two edges writing $A,B$ the two operators (hence we are in the classical case of the Pythagorean algebra and the Cuntz algebra).
Take separable Hilbert spaces $H_0,H_1$ with orthonormal basis $(\delta_n)_{n\geq 0}$ and $(\gamma_n)_{n\geq 0}$, respectively.
Fix a sequence $(c_n)_{n\geq 0}$ of real numbers in $(0,1)$ such that $\prod_{k=0}^N c_k$ converges to a strictly positive real number $c_\infty$.
Let $H:=H_0\oplus H_1$ and let $A, B \in B(H)$ be the operators such that
\begin{gather*}
(A\oplus B)(\delta_{2n}) = (\delta_{2n+1},0), \quad (A \oplus B)(\delta_{2n+1}) = (\delta_{2n},\delta_{2n+1})/\sqrt 2,\text{ and}\\
    \quad (A \oplus B)(\gamma_n) = (c_n\gamma_{n+1}, \sqrt{1-c_n^2} \delta_{2n}).
\end{gather*}
Note that $A\oplus B$ is an isometry and thus $(A,B,H)$ defines an $\cAL$-module.
The space $H_0$ is a submodule of $H$ with $H_0^\perp = H_1$. Since $B(H_1)\subset H_0$,
the module $H_1$ is a residual subspace.

We now show that $H_0\subset H$ is not complete.
Let $\xi:=\gamma_0\in H_1$. We show that the vectors $([t,A_t\xi])_{t \in \cT}$ are bounded away from $H_0$ (again writing $A_t\xi = (A_\mu \xi)_{\mu \in t}$).
An induction using the definition of a tree shows that for any nontrivial tree $t$ there is a strictly positive $n(t) \in \N$ such that $0^{n(t)} \in t$ and
$0^m \not\in t$ for all $m \not= n(t)$. Since the range of $B$ is contained in $H_0$, which is invariant for $A \oplus B$, we see that
$\operatorname{proj}_{H_1}(A_t \xi) = \operatorname{proj}_{H_1}(A^{n(t)}(\gamma_0)) = \big(\prod^{n(t)}_{j = 0} c_j\big) \gamma_{n(t)+1}$. Consequently,
$d(A_t\xi, H_0^{t}) = \prod^{n(t)}_{j = 0} c_j > c_\infty > 0$.
\end{example}

We now prove a key lemma that unlocks the classification of representations of $\cOL$.

\begin{lemma}\label{lem:key}
Suppose that $H$ is an $\cAL$-module containing two nonzero submodules $H_0$ and $H_1$.
Assume that $H_1$ is complete and at least one of $H_0$ and $H_1$ is finite dimensional.
Then $H_0\cap H_1\neq \{0\}$. If, in addition, $H_i$ is irreducible (for either of $i = 0, 1$), then $H_i \subset H_{1-i}$.
\end{lemma}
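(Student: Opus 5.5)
The plan is to use completeness of $H_1$ to produce unit vectors of $H_0$ that lie arbitrarily close to $H_1$. A compactness argument in whichever of the two is finite dimensional then yields a genuine common nonzero vector. The irreducibility statement follows from the fact that $H_0\cap H_1$ is itself a submodule.

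\emph{Step 1: a vector of $H_0$ at a single vertex.} Since $H_0\neq\{0\}$ is a submodule, it is invariant under the projections $A_v$. By nondegeneracy, $\sum_v A_v$ converges to the identity, so some $H_0\cap H_v$ is nonzero. Fix a unit vector $\xi\in H_0\cap H_v$.

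\emph{Step 2: unit vectors of $H_0$ that are $\varep$-close to $H_1$.} Given $\varep>0$, apply completeness of $H_1$ in the form of Proposition~\ref{prop:complete}(4). This gives a tree $t\in v\cT$ and vectors $\eta_\mu\in H_1$ with $\sum_{\mu\in t}\|A_\mu\xi-\eta_\mu\|^2<\varep^2$. Because $H_0$ is a submodule, each $A_\mu\xi$ lies in $H_0$. Because $A_t$ is an isometry on $H_v$ (Lemma~\ref{lem:isometries} and the tree construction), we have $\sum_{\mu\in t}\|A_\mu\xi\|^2=1$. Hence
\[
\sum_{\mu\in t} d(A_\mu\xi,H_1)^2<\varep^2\sum_{\mu\in t}\|A_\mu\xi\|^2 .
\]
Consequently some leaf $\mu$ has $A_\mu\xi\neq0$ and $d(A_\mu\xi,H_1)<\varep\|A_\mu\xi\|$, for otherwise summing the reverse inequalities would contradict the display. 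Normalising, we obtain a unit vector $\zeta_\varep\in H_0$ with $d(\zeta_\varep,H_1)<\varep$. I expect this averaging step to be the crux: completeness only controls the total error over the leaves of the tree, and we must extract a single vector whose relative error is small.

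\emph{Step 3: compactness.} If $H_0$ is finite dimensional, its unit sphere is compact. A subsequence of $\zeta_{1/n}$ therefore converges to a unit vector $\zeta\in H_0$ with $d(\zeta,H_1)=0$, so $\zeta\in H_1$ because $H_1$ is closed. If instead $H_1$ is finite dimensional, pick $\eta_n\in H_1$ with $\|\zeta_{1/n}-\eta_n\|<1/n$. Then $\|\eta_n\|\to1$, and a subsequence converges to some $\eta\in H_1$ with $\|\eta\|=1$ and $d(\eta,H_0)=0$, so $\eta\in H_0$ since $H_0$ is closed. In either case $H_0\cap H_1\neq\{0\}$.

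\emph{Step 4: irreducibility.} The intersection $H_0\cap H_1$ is a closed subspace invariant under all $A_\lambda$, including the vertex projections. It therefore decomposes over vertices, and the isometry conditions of Definition~\ref{def:A} restrict to it, so it is a nonzero submodule of each $H_i$. If $H_i$ is irreducible, it has no nonzero proper submodule, so $H_0\cap H_1=H_i$, which gives $H_i\subset H_{1-i}$.
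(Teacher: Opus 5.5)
Your proposal is correct and follows essentially the same route as the paper: normalise a vector of $H_0$ at a single vertex, use completeness together with $\sum_{\mu\in t}\|A_\mu\xi\|^2=1$ to find a leaf whose relative error is below $\varep$, deduce that the unit spheres of $H_0$ and $H_1$ are at distance zero, and finish by compactness and by applying irreducibility to the submodule $H_0\cap H_1$. The differences are cosmetic. You measure the error by $d(A_\mu\xi,H_1)$ rather than comparing $A_\mu\xi/\|A_\mu\xi\|$ with $\eta_\mu/\|\eta_\mu\|$, which spares you the $\varep<1$ and $\eta_\mu\neq0$ bookkeeping and the factor $2$. You also spell out the compactness step separately in the two finite-dimensionality cases.
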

\begin{proof}
Fix $\xi\in H_0\setminus\{0\}$. There exists a vertex $v\in \Lambda^0$ such that $A_v\xi\neq 0$. Since $H_0$ is a module, $A_v \xi \in H_0$.
So by replacing $\xi$ with $A_v\xi/\|A_v\xi\|$, we may assume that $\xi\in A_vH_0$ and has norm $1$.
Fix $0<\varep<1$. By completeness of $H_1\subset H$ there exists a tree $t\in v\cT$ and vectors $(\eta_\mu)_{\mu \in t}$ in $H_1$ such that $\sum_{\mu\in t} \| A_\mu \xi - \eta_\mu\|^2<\varep^2.$

Claim: There exists $\mu \in t$ such that $A_\mu\xi \neq 0 \neq \eta_\mu$ and such that
\[
\Big\|\frac{A_\mu\xi}{\|A_\mu\xi\|} - \frac{\eta_\mu}{\|\eta_\mu\|}\Big\| < 2\varep.
\]

To see this, let $L = \{\mu \in t :\ A_\mu\xi\neq 0\}$. Suppose for contradiction that
\[
\min_{\mu \in L} \frac{\|A_\mu\xi-\eta_\mu\|}{\|A_\mu \xi\|}\geq \varep.
\]
Then, using at the last line that $A_t : A_v \to H_{t} = \bigoplus_{\mu \in t} H_{s(\mu)}$ is an isometry, we calculate
\begin{align*}
\sum_{\mu\in t} \|A_\mu\xi - \eta_\mu\|^2 & \geq \sum_{\mu\in L}  \|A_\mu\xi - \eta_\mu\|^2
 = \sum_{\mu\in L} \|A_\mu\xi\|^2\cdot \| \frac{A_\mu\xi-\eta_\mu}{\|A_\mu \xi\|}\| ^2\\
&\geq \sum_{\mu\in L} \|A_\mu\xi\|^2\cdot \varep^2
 = \varep^2
\end{align*}
This is impossible, so there exists $\mu \in t$ such that $A_\mu\xi\neq 0$ and
$$\frac{\|A_\mu\xi-\eta_\mu\|}{\|A_\mu \xi\|}<\varep.$$
Now
\begin{align*}
\| \frac{A_\mu\xi}{\|A_\mu \xi\|} - \frac{\eta_\mu}{\|\eta_\mu\|}\| & \leq \| \frac{A_\mu\xi}{\|A_\mu \xi\|} - \frac{\eta_\mu}{\|A_\mu\xi\|}\| + \| \frac{\eta_\mu}{\|A_\mu \xi\|} - \frac{\eta_\mu}{\|\eta_\mu\|}\|  \\
& = \frac{\|A_\mu\xi-\eta_\mu\|}{\|A_\mu \xi\|} +| \frac{\|\eta_\mu\|}{\|A_\mu\xi\|} - 1|\\
& = \frac{\|A_\mu\xi-\eta_\mu\|}{\|A_\mu \xi\|} + | \frac{\|\eta_\mu\| - \|A_\mu\xi\|}{ \|A_\mu \xi\|} |\\
& \leq 2\frac{\|A_\mu\xi-\eta_\mu\|}{\|A_\mu \xi\|}
 < 2\varep.
\end{align*}
Note that since $\xi\in H_0$ and $H_0$ is a submodule, then $A_\mu\xi\in H_0$ for all path $\mu$.
Hence, we have shown that for all $\varep>0$ there exists a pair of \emph{unit} vectors $\eta\in H_0$ and $\zeta\in H_1$ so that $\|\eta-\zeta\|<\varep.$
Thus, the unit spheres of $H_0$ and $H_1$ are at distance zero.
Since $H_0$ or $H_1$ is finite dimensional one of these unit spheres is compact.
Therefore, there is a vector $\xi$ in one of the unit sphere at distance zero from the other unit sphere.
This means that $\xi$ is a unit vector in $H_0\cap H_1$ and thus $H_0\cap H_1$ is a nonzero submodule.

Now assume additionally that $H_i$ is irreducible.
By the first statement $H_0\cap H_1$ is a nontrivial submodule of $H_i$.
By irreducibility of $H_i$ we deduce that $H_0\cap H_1=H_i$, so~$H_i\subset H_{1-i}$.
\end{proof}

We may now prove that for finite-dimensional modules, completeness and weak completeness coincide.

\begin{proposition} \label{prop:weak-complete}
Assume that $H$ is a finite dimensional $\cAL$-module.
Then a submodule $H'\subset H$ is weakly complete if and only if it is complete.
\end{proposition}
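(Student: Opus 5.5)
The converse direction is already done: Proposition~\ref{prop:complete}(5) shows that a complete submodule is weakly complete. So it remains to prove that weak completeness implies completeness when $\dim H<\infty$. I will argue by contraposition, using Lemma~\ref{lem:key} inside the big module $U\Pi(H)$.

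Let $H'\subset H$ be a submodule, put $Z:=H\ominus H'$, and suppose $H'$ is not complete. Let $L':=\Pi(H')$, the closed linear span of the $[\lambda,\xi]$ with $\xi\in H'$. By Proposition~\ref{prop:complete}(1)--(2), $L'$ is the subrepresentation of $\Pi(H)$ generated by $J_H(H')$, and non-completeness means $L'\neq \Pi(H)$. Since $\cOL$ is a C$^*$-algebra, $L'':=\Pi(H)\ominus L'$ is a nonzero subrepresentation. Let $P$ denote the orthogonal projection onto $L''$; it commutes with all $X_\lambda$ and $X_\lambda^*$.

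Set $M:=P J_H(H)\subset L''$. The following steps show that $M$ is a nonzero finite-dimensional submodule of $U\Pi(H)$.
\begin{itemize}
\item $M$ is finite dimensional because $H$ is.
\item $M$ is invariant under every $X_\lambda^*$: by Proposition~\ref{prop:functor}(3), $J_H(H)$ is invariant under every $X_\lambda^*$, and $P$ commutes with $X_\lambda^*$. Hence $M$ is an $\cAL$-submodule of $U\Pi(H)$.
\item $M\neq\{0\}$: the vectors $X_\lambda J_H\xi=[\lambda,\xi]$ span a dense subspace of $\Pi(H)$. If $PJ_H(H)=0$, then $PX_\lambda J_H(H)=X_\lambda PJ_H(H)=0$ for all $\lambda$, which would force $P=0$.
\end{itemize}

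Next, $J_H(H)$ is a finite-dimensional submodule of $U\Pi(H)$, and it is complete there. Indeed, the subrepresentation it generates is all of $\Pi(H)$, so Proposition~\ref{prop:complete}(2), applied to the module $U\Pi(H)$ with $\Pi U\Pi(H)\simeq \Pi(H)$ via Proposition~\ref{prop:functor}(4), gives completeness. It may be cleaner to verify criterion (4) of Proposition~\ref{prop:complete} directly from the density of the $[t,\zeta]$.

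Lemma~\ref{lem:key} applied to $H_0:=M$ and $H_1:=J_H(H)$ now gives a nonzero submodule $V:=M\cap J_H(H)$. Since $V\subset M\subset L''$ and $J_H(H')\subset L'$, we have $V\perp J_H(H')$. Because $J_H$ is an isometric module map, $J_H^{-1}(V)$ is a nonzero submodule of $H$ orthogonal to $H'$, hence contained in $Z$. So $Z$ is not residual, and $H'$ is not weakly complete.

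The main obstacle is the completeness of $J_H(H)$ inside $U\Pi(H)$. This requires being careful that $U\Pi(H)$ is itself a module whose $\Pi$-image is identified with $\Pi(H)$ (Proposition~\ref{prop:functor}(4)), so that Proposition~\ref{prop:complete}(2) applies. I would first try to check criterion (4) directly by approximating $J_U(\zeta)$ for $\zeta\in\Pi(H)$ by elements $[t,\eta]$ with $\eta$ taken from $H$.
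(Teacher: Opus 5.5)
Your argument is correct and is essentially the paper's proof: you argue by contraposition, split $\Pi(H)=\Pi(H')\oplus L''$, and apply Lemma~\ref{lem:key} inside $U\Pi(H)$ with the finite-dimensional complete submodule $J_H(H)$ to produce a nonzero submodule of $H$ orthogonal to $H'$, hence lying in $Z$. The only differences are these. First, you compress to the finite-dimensional $M=PJ_H(H)$; this is harmless but unnecessary, since Lemma~\ref{lem:key} already applies with $H_0=L''$ infinite-dimensional once $H_1=J_H(H)$ is finite-dimensional. Second, you justify the completeness of $J_H(H)$ in $U\Pi(H)$ via Proposition~\ref{prop:functor}(4), whereas the paper simply asserts it.
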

\begin{proof}
We have already proven that weakly complete modules are complete. For the converse we prove the contrapositive.
Assume that $H$ is a finite dimensional $\cAL$-module and that $H = H'\oplus Z$ where $Z$ is a vector subspace
and where $H'\subset H$ is a submodule that is not complete. That is, the closed linear span
$\Pi(H') = \overline{\operatorname{span}}\{[\lambda,\xi]:\ \lambda\in\Lambda\text{ and }\xi \in H'\} \lneq \Pi(H)$.
Then $\Pi(H) = \Pi(H')\oplus L$ with $\Pi(H'),L$ nonzero representations of $\cOL$.
In particular, $L$ is a nonzero submodule of $\Pi(H)$ (for the usual action $a_\lambda\cdot \zeta:=X_\lambda^*\zeta$)
and $H\subset \Pi(H)$ is a finite dimensional complete submodule
(after identifying $H$ with $J_H(H)$ and restricting the $\cOL$-action into an $\cAL$-action).
Hence, Lemma~\ref{lem:key} implies that $L\cap H$ is a nonzero submodule of $H$.
Since $L$ is orthogonal to $\Pi(H')$ it is in particular orthogonal to $H'$.
Therefore, $L\cap H$ is a nonzero submodule of $H$ contained in $Z$.
In particular, $Z$ is not residual and thus $H'$ is not weakly complete.
\end{proof}

\subsection{Decompositions of \texorpdfstring{$\cAL$}{AΛ}-modules and representations of \texorpdfstring{$\cOL$}{OΛ}}
The aim of this section is to describe how to decompose $\cAL$-modules and then to classify representations of $\cOL$.
Our techniques work under a finite dimensionality assumption defined below. We will later introduce in
Section~\ref{sec:dimension} a new dimension vector that is coherent with this definition.

\begin{definition}\label{def:finite-P-dimension}
An $\cAL$-module $H$ (resp.~a representation $L$ of $\cOL$) has \emph{finite Pythagorean dimension}
(in short finite P-dimension) if there is a finite dimensional $\cAL$-module $K$ satisfying $\Pi(H)\simeq \Pi(K)$
(resp.~$L\simeq \Pi(K)$).
\end{definition}

\begin{remark} \label{rem:finite-p-dim}
The notion of finite P-dimension of a representation of $\cOL$ depends on the choice of the graph $\Lambda$, but we do not make this explicit in our notation or terminology unless there is a possibility of ambiguity.
\end{remark}

The next proposition underpins the most important structural results of this paper.

\begin{proposition}\label{prop:decomposition}
Let $H$ be a nonzero $\cAL$-module with finite Pythagorean dimension.
\begin{enumerate}
\item
There exist $n\geq 1$, irreducible finite dimensional submodules $H_1,\cdots,H_n$ of $H$, and a residual subspace $Z\subset H$ such that
$$H=H_1\oplus\cdots\oplus H_n \oplus Z.$$
The module $H_s:=\bigoplus_{i=1}^n H_i \le H$ is a complete submodule.
Moreover, this decomposition is unique in the following sense: if $H \cong K_1\oplus\cdots\oplus K_m\oplus Y$
with each $K_j$ irreducible and $Y$ residual, then $n=m$ and there exists a bijection $\varphi$ of $\{1,\cdots,n\}$
such that $H_i\simeq K_{\varphi(i)}$ as module for each $1\leq i\leq n$.
\item Write $H_s$ for the submodule $H_1\oplus\cdots\oplus H_n \le H$ from~(1).
This is the \emph{smallest complete submodule} of $H$: it is complete and all complete submodules of $H$ contain $H_s$.
\item If $K$ is an $\cAL$-module with finite P-dimension and $K_s$ is its smallest complete submodule as in~(2), and if $\theta:H\to K$
    is a morphism of modules, then $\theta(H_s)\le K_s$.
\item The space $H_s$ is the smallest complete submodule of $U\Pi(H)$.
\item Let $H, K$ be $\cAL$-modules with finite P-dimension, and let $H_s$ and $K_s$ be their smallest complete submodules as in~(2).
    Then $\Pi(H)\simeq \Pi(K)$ as representations of $\cOL$ if and only if $H_s\simeq K_s$ as $\cAL$-modules.
\item The representation $\Pi(H)\in \ob \Rep(\cOL)$ is irreducible if and only if $H=H_1\oplus Z$ with $H_1$ an irreducible submodule and $Z$ a residual subspace, i.e.~$H$ is an indecomposable module.
\item If $H=H_1\oplus\cdots\oplus H_n \oplus Z$ as in (1), then $\Pi(H)\simeq \Pi(H_1)\oplus\cdots\oplus\Pi(H_n)$.
Moreover, each $\Pi(H_i)$ is irreducible and $\Pi(H_i)\simeq \Pi(H_j)$ as representations of $\cOL$ if and only if $H_i\simeq H_j$ as $\cAL$-modules.
\end{enumerate}
\end{proposition}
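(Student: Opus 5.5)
The plan is to move everything into the representation $\Pi(H)$ of $\cOL$, viewed as an $\cAL$-module through $U$, and to use Lemma~\ref{lem:key} as the basic tool. Since $H$ has finite P-dimension, there is a finite dimensional module $K$ with $\Pi(H)\simeq\Pi(K)$. By Proposition~\ref{prop:functor}(3) and Proposition~\ref{prop:complete}(1), $J_K(K)$ is a finite dimensional complete submodule of $U\Pi(K)\simeq U\Pi(H)$, and $J_H(H)$ is a complete submodule of $U\Pi(H)$.

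\textbf{Step 1: the building blocks and part (1).} Call a submodule $M\le U\Pi(H)$ \emph{minimal} if it is finite dimensional and irreducible. Every nonzero finite dimensional submodule contains a minimal one, by taking a nonzero submodule of least dimension. Applying Lemma~\ref{lem:key} to a minimal $M$ and the complete submodule $J_H(H)$ gives $M\subset J_H(H)$; the same argument gives $M\subset J_K(K)$. So the sum $S$ of all minimal submodules lies inside $J_H(H)\cap J_K(K)$, and in particular $\dim S\le\dim K<\infty$.

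Next I show that $S$ is a direct sum of minimal submodules. Two distinct minimal submodules meet trivially by irreducibility, so I can choose minimal $M_1,\dots,M_n$ greedily with each $M_{j+1}\not\subset M_1+\dots+M_j$. I then aim to upgrade this to an orthogonal decomposition. The idea is that if $M_i$ and $M_j$ are distinct, then $\Pi(M_i)$ and $\Pi(M_j)$ should be orthogonal subrepresentations. One would try to show this by decomposing $\Pi(M_i)$ against $\Pi(M_j)$ and its orthogonal complement and using Lemma~\ref{lem:key} inside each piece. I expect this orthogonality to be the \textbf{main obstacle}, because non-self-adjointness means the orthogonal complement of a submodule need not be a submodule.

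A cleaner route is to work at the level of $\Pi$. Let $P$ be the projection onto a subrepresentation $L'$ of $\Pi(H)$. Then $P$ commutes with every $X_\lambda^*$, so $P\,J_K(K)$ is a finite dimensional submodule of $L'$. It is complete in $L'$, since its $\cOL$-span is $P\Pi(H)=L'$. Hence every nonzero subrepresentation contains a minimal submodule.

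Using this, I decompose $\Pi(H)$ into subrepresentations as finely as possible. The number of pieces is bounded by $\dim K$, because each piece contains a distinct minimal submodule inside $J_K(K)$. Each resulting piece $L_i$ is irreducible. Write $M_i$ for the unique minimal submodule in $L_i$; it is unique because two distinct ones would generate complementary subrepresentations, contradicting irreducibility of $L_i$.

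Pulling back along $J_H$ gives orthogonal irreducible submodules $H_i$ of $H$. Their sum $H_s$ generates $\Pi(H)$, so $H_s$ is complete by Proposition~\ref{prop:complete}, and $Z:=H\ominus H_s$ is residual by Proposition~\ref{prop:complete}(5).

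\textbf{Step 2: parts (2), (3) and (4).} For (2), take any complete submodule $C$ of $H$. Lemma~\ref{lem:key} applied to each irreducible $H_i$ and to $C$ gives $H_i\subset C$, so $H_s\subset C$. Part (4) follows because the $H_i$ are exactly the minimal submodules of $U\Pi(H)$. For (3), a morphism $\theta:H\to K$ maps each $H_i$ onto either $0$ or an irreducible finite dimensional submodule of $K$. By Lemma~\ref{lem:key} applied with $K_s$ complete, that image lies in $K_s$.

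\textbf{Step 3: uniqueness in (1), and parts (5), (6) and (7).} The $\Pi(H_i)=L_i$ are irreducible and $\Pi(H)\simeq\bigoplus_i\Pi(H_i)$, since $\Pi(Z)$ is absorbed by completeness; this gives (7). Part (6) follows directly from (7).

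For the remaining isomorphism statements, including (5), the key point is that $\Pi$ is fully faithful on these pieces. Given an intertwiner $\Phi:\Pi(H)\to\Pi(K)$, it maps the minimal submodules of $U\Pi(H)$ into $K_s$ by (3) and (4). Restricting $\Phi$ therefore gives a module map $H_s\to K_s$, and an isomorphism $\Phi$ restricts to an isomorphism $H_s\to K_s$. Conversely, $\Pi$ of an isomorphism $H_s\simeq K_s$ gives $\Pi(H)\simeq\Pi(K)$.

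Uniqueness of the decomposition in (1), and the statement that $\Pi(H_i)\simeq\Pi(H_j)$ iff $H_i\simeq H_j$, then follow from Krull--Schmidt for the finite direct sum of irreducible representations of $\cOL$, combined with this restriction correspondence.
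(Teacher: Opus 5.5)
Your existence argument is correct and takes a genuinely different route from the paper's.

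The paper first decomposes the auxiliary finite-dimensional module $K'$ at the module level, by induction on dimension via Proposition~\ref{prop:weak-complete}. It then transports the pieces into $H$ through $\theta:\Pi(K')\to\Pi(H)$ using Lemma~\ref{lem:key}. Uniqueness is a hands-on Schur argument with the projections $p_{k,l}$ and a comparison of $\End(L^d)$.

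You instead split $\Pi(H)$ into irreducible subrepresentations $L_i$. There are at most $\dim K$ of them, since each contains a minimal submodule, which Lemma~\ref{lem:key} places in $J_K(K)$. You then pull a minimal $M_i\subset L_i$ back along $J_H$. This gets orthogonality of the $H_i$ for free from the C$^*$-side, where complements of subrepresentations are subrepresentations. That is exactly the obstacle you anticipated, and it is where the paper's induction passes to $K\ominus K_1$, which need not be a submodule (cf.\ Example~\ref{ex:reducible}). Your route also shows that subrepresentations of finite P-dimensional representations have finite P-dimension. It reduces uniqueness to the standard uniqueness of finite sums of irreducible C$^*$-representations together with~(5).

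Two of your justifications are wrong, although the conclusions survive.

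First, uniqueness of the minimal submodule of $L_i$ cannot come from ``complementary subrepresentations''. Since $L_i$ is irreducible, every nonzero submodule of $UL_i$ generates all of $L_i$. The right argument is that a minimal $M\subset L_i$ is therefore complete in $UL_i$ (Proposition~\ref{prop:complete}). Lemma~\ref{lem:key} then forces any other minimal $M'$ into $M$, so $M'=M$. In any case, existence does not need this uniqueness.

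Second, the $H_i$ are \emph{not} exactly the minimal submodules of $U\Pi(H)$ once some $L_i\simeq L_j$. For $\cO_2$ with $H=\C^2$, $A_1=z_1I$, $A_2=z_2I$, every line is a minimal submodule, but your construction selects only two orthogonal ones. For~(4) you only need two facts: every complete submodule of $U\Pi(H)$ contains each $M_i$ (Lemma~\ref{lem:key}), and $\bigoplus_i M_i$ is complete.

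Finally, uniqueness in~(1), and the ``if'' direction of~(6), need a step you omit. Take the decomposition $H'=K_1\oplus\cdots\oplus K_m\oplus Y$ of a module $H'\cong H$. To get $\Pi(H')\simeq\bigoplus_j\Pi(K_j)$ you need $\bigoplus_j K_j$ to be complete. The hypothesis only says $Y$ is residual, and Proposition~\ref{prop:weak-complete} is unavailable when $H'$ is infinite-dimensional.

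Your tools close this. Suppose $L:=\Pi(H')\ominus\Pi(\bigoplus_j K_j)\neq\{0\}$. Apply Lemma~\ref{lem:key} to $L$ and the finite-dimensional complete submodule $J_{H'}(H'_s)$ of $U\Pi(H')$. This yields a nonzero submodule $L\cap J_{H'}(H'_s)\subset J_{H'}(H')$ orthogonal to $J_{H'}(\bigoplus_j K_j)$, i.e.\ lying inside $J_{H'}(Y)$, which is a contradiction.

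Each $\Pi(K_j)$ is then a subrepresentation of $\Pi(H')$, hence has finite P-dimension, so (5) and~(6) apply to the $K_j$. The paper's Schur argument also takes completeness of $K_s$ for granted at this point.
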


\begin{proof}
Proof of (1).
We first show by induction that every finite-dimensional complete submodule $K$ of $H$ contains an irreducible submodule. The base case is $\dim(K) = 1$, in which case $K$ itself is (trivially) an irreducible submodule of $K$. So suppose inductively that every complete submodule $K \subset H$ with $\dim(K) < N$ contains an irreducible submodule, and suppose that $K$ is a complete submodule with $\dim(K) = N$. If $K$ is irreducible the, again, $K$ is an irreducible submodule of itself, so suppose that $K$ is reducible. Then it decomposes as $K = K' \oplus Z$ with $K'$ weakly complete and $Z$ residual, and both nonzero. In particular $\dim(K') = \dim(K) - \dim(Z) < \dim(K)$. Proposition~\ref{prop:weak-complete} shows that $K'$ is complete. So the inductive hypothesis applies, and yields an irreducible submodule $K''$ of $K'$; this $K''$ is then the desired irreducible submodule of $K$.

We now show that if $K$ is a finite-dimensional complete submodule of $H$, then it admits a decomposition $K = K_1 \oplus K_2 \oplus \cdots K_n \oplus Z$ as above. Again, we proceed by induction on $\dim(K)$. If $\dim(K) = 1$, then $n = 1$, $K_1 = K$ and $Z = \{0\}$ is the desired decomposition. So suppose inductively that every complete submodule $K' \subset H$ with $\dim(K') < N$ has a decomposition of the desired form, and suppose that $\dim(K) = N$. By the preceding paragraph, $K$ has a nontrivial irreducible submodule $K_1$. Let $K' = K \ominus K_1$, the orthogonal complement of $K_1$ in $K$. Again, Proposition~\ref{prop:weak-complete} implies that $K'$ is complete, so the inductive hypothesis gives a decomposition $K' = K_2 \oplus \cdot \oplus K_n \oplus Z$ of the desired form, and then $K = K_1 \oplus K' = K_1 \oplus K_2 \oplus \cdot \oplus K_n \oplus Z$ as required.
%%%%%%%%%%%%%%%%%%%%%%

By assumption $H$ has finite P-dimension implying that there exists a finite dimensional $\cAL$-module $K'$ and an isomorphism $\theta:\Pi(K')\to \Pi(H)$.
Since $K'$ is finite dimensional it admits a decomposition as above $K'=K_1'\oplus\cdots\oplus K_n'\oplus Z'$ with $K_i'$ irreducible and $Z'$ residual.
Consider now $\Pi(H)$ as an $\cAL$-module rather than a representation of $\cOL$ and identify $H$ with its image $J_H(H)$ in
$\Pi(H)$ under the isometry $J_H$ of Proposition~\ref{prop:functor}(3) (see also Remark~\ref{rmk:cAL helps}).
By definition of $\Pi(H)$ the submodule $H\subset \Pi(H)$ is complete.
Hence, we may apply Lemma \ref{lem:key} to the modules $\theta(K_i')$ and $H$.
We deduce that $\theta(K_i')\subset H$ and thus $\theta(K_s')\subset H$ where $K_s':=\oplus_{i=1}^n K_i'$.
Note that $K_s'\subset K$ is complete and thus is complete inside $\Pi(H)$.
We deduce that $H$ contains a finite dimensional complete submodule, namely $\theta(K_s')$.
We obtain the existence of a decomposition as in the satetement (1) where $H_i:=\theta(K_i')$ and where $Z:= H\ominus \theta(K_s')$.

To prove uniqueness of the decomposition we use Lemma~\ref{lem:key} and reproduce a version of Schur's lemma.
In details, assume that $\theta$ is an isomorphism of modules from $H_s\oplus Z$ to $K_s\oplus Y$ where $H_s:=H_1^{c_1}\oplus \dots \oplus H_n^{c_n}$ and $K_s=K_1^{d_1}\oplus\cdots\oplus K_m^{d_m}$ for some $n,m,c_i,d_i\geq 1$, with the
$H_i$ and $K_i$ irreducible, and $Z,Y$ residual, and with $H_i\not\simeq H_j$ and $K_i\not\simeq K_j$ if $i\neq j$.
Note that $\theta$ is a continuous bijective linear map that intertwines the $A_\lambda^H$'s with the $A_\lambda^K$'s.
Write $H_{i,j}$ and $K_{i,j}$ for the $j$th summands of $H_i^{c_i}$ and $K_i^{d_i}$ respectively.
Fix $i,j$ and consider $\theta(H_{i,j})$.
Since $K_s$ is complete and $\theta(H_{i,j})$ is irreducible and both modules are finite dimensional we deduce that $\theta(H_{i,j})\subset K_s.$
Hence $\theta(H_s)\subset K_s$.
The same argument applied to $\theta^{-1}$ shows that $\theta(H_s)=K_s$.

Now, fix $\xi\in K_s$ and decompose it as $\xi = \sum_{i,j} \xi_{i,j}$ with $\xi_{i,j}\in K_{i,j}$.
Let $p_{k,l} : K_s \to K_{k,l}$ be the orthogonal projection.
We show that $p_{k,l}$ commutes with the $A^K_\lambda$'s (this requires an argument because $\cAL$ is non-self-adjoint).
Indeed, $A^K_\lambda \circ p_{k,l} \xi = A^K_\lambda \xi_{k,l}$. Since the $K_{i,j}$ are submodules we have
$p_{k,l} A^K_\lambda \xi = p_{k,l}\sum_{i,j} A^K_\lambda\xi_{i,j}=A^K_\lambda\xi_{k,l}$.
Thus $p_{k,l}\circ \theta(H_{i,j})$ is a submodule of $K_s$.
Since $\sum_{k,l} p_{k,l}$ acts as the identity on $K_s$ there exist $\varphi(i)$ and $l$ such that $p_{\varphi(i),l}\theta(H_{i,j})$ is nontrivial.
By irreducibility of both $H_{i,j}$ and $K_{\varphi(i),l}$ we deduce that $H_{i,j}$ is isomorphic to $K_{\varphi(i),l}$ via
$p_{\varphi(i),l}\circ \theta$\footnote{This does not imply that $\theta$ maps $H_{i,j}$ inside $K_{\varphi(i),l}$: it may, for instance, implement a diagonal embedding.}.
Thus $H_i\simeq K_{\varphi(i)}$ as modules. Assume now that $(k,r)$ satisfies $p_{k,r}\circ \theta(H_{i,j}) \not=  \{0\}$.
Then $K_r \simeq K_{\varphi(i)}$ and thus $k=\varphi(i)$. This implies that $\theta(H_{i,j})\subset K_{\varphi(i)}^{d_{\varphi(i)}}.$
Applying the same argument to $H_{i,k}$ for each $k$, we deduce that $\theta(H_i^{c_i})\subset K_{\varphi(i)}^{d_{\varphi(i)}}.$
The map $i\mapsto \varphi(i)$ is injective because $H_i \not\simeq H_j$ for $i\neq j$. Applying a similar argument to $\theta^{-1}$
we deduce that $\theta(H_i^{c_i})=K_{\varphi(i)}^{d_{\varphi(i)}}$ and that $i\mapsto \varphi(i)$ is bijective.

It remains to prove that the multiplicities are equal, namely $c_i=d_{\varphi(i)}$ for all $i$.
To see this, note that if $L$ is an irreducible module and $L^k\simeq L^j$, then $\End(L^k)\simeq \End(L^j)$. Since $\End(L^d)$ is a matrix algebra of size $d \times d$
for each of $d = k,l$, the isomorphism $\End(L^k) \simeq \End(L^j)$ forces $j = k$. We have now proved that $n=m$, that $\varphi$ is a bijection of $\{1,\cdots,n\}$,
that $c_i=d_{\varphi(i)}$ for all $i$, and that $H_i\simeq K_{\varphi(i)}$ for all $i$.

Proof of (2).
We have already proven that $H_s$ is complete in $H$.
Take $H'\subset H$ complete.
By Lemma~\ref{lem:key} we have $H_i\subset H$ for all $1\leq i\leq n$.
Hence, $\oplus_i H_i\subset H'$ meaning $H_s\subset H'$.

Proof of (3).
Decompose $H_s$ as $\bigoplus_{i=1}^n H_i$ with each $H_i$ irreducible as a module.
Note that $\theta(H_i)$ is either irreducible or is trivial. If it is trivial it is of course inside $K_s$.
Assume now that $\theta(H_i)$ is nontrivial.
Lemma \ref{lem:key} implies that $\theta(H_i)\subset K_s$ by irreducibility of $H_i$ and completeness of $K_s$.
Hence, for any $1\leq i\leq n$ we have $\theta(H_i)\subset K_s$ and thus $\theta(H_s)\subset K_s$.

Proof of (4).
Consider the representation $\Pi(H)$ of $\cOL$ and let $U\Pi(H)$ be the same Hilbert space considered as an $\cAL$-module.
Let $L\subset \Pi(H)$ be a complete submodule.
By Lemma \ref{lem:key}, $H_i\subset L$ for all $i$ and thus $H_s\subset L$.
This proves that $H_s$ is the smallest complete submodule of $U\Pi(H)$.

Proof of (5).
Consider an isomorphism $\theta:\Pi(H)\to \Pi(K)$ of representations of $\cOL$.
By (3) we have $\theta(H_s)\subset K_s$.
Reapplying this argument to $\theta^{-1}$ yields $\theta(H_s)\supset K_s$ and that $\theta$ defines an isomorphism of modules between $H_s$ and $K_s$.
The converse is given by functoriality of $\Pi$.

Proof of (6).
Decompose $H$ as $H_1\oplus \cdots \oplus H_n\oplus Z$ as in~(1).
If $n\geq 2$, then $\Pi(H_1)\subset \Pi(H)$ is a proper nonzero subrepresentation and thus $\Pi(H)$ is reducible.
Assume that $n=1$ and assume that $L\subset \Pi(H)$ is a nonzero subrepresentation.
Since $H_1=H_s$, it is complete. So Lemma \ref{lem:key} implies that $L\cap H_1$ is a nonzero module.
By irreducibility $H_1\subset L$ and thus $L$ is complete.
Since $L$ is complete and is closed under the $X_\lambda$ we see that $L=\Pi(H)$.
Hence, $\Pi(H)$ is an irreducible representation of $\cOL$.

Proof of (7).
Since $\bigoplus_i H_i$ is complete, we have $\Pi(H) \simeq \Pi\big(\bigoplus_i H_i\big)$. Functoriality of $\Pi$ then
gives $\Pi(H) \simeq \bigoplus_i \Pi(H_i)$. The $\Pi(H_i)$ are irreducible by~(6) and the final statement
follows from Proposition~\ref{prop:functor}.
\end{proof}

\begin{example}\label{ex:reducible}
We present an example of decomposable $\cAL$-module $H=H_1\oplus H_2\oplus Z$ that is irreducible as a representation of $\cPL$.
Let $\Lambda$ be the graph with one vertex and two loops and write $A_1,A_2$ for the generators of $\cPL$.
Take $H=\C^3$ with basis $e_0,e_1,e_2$ and
$$A_1\oplus A_2:H\to H\oplus H, \ e_0\mapsto (e_1,e_2)/\sqrt 2, \ e_1\mapsto (0,e_1), \ e_2\mapsto (e_2,0).$$
Since $A_1\oplus A_2$ is isometric, the pair $(A_1,A_2,H)$ defines an $\cAL$-module.
Let $H_i = \C e_i$ for $i = 1,2$, and $Z = \C e_0$. Then $H = H_1\oplus H_2\oplus Z$, each $H_i$ is a submodule of $H$,
and $Z$ is residual. The $H_i$ are irreducible as they are 1-dimensional.
Hence, $\Pi(H)=\Pi(H_1)\oplus \Pi(H_2)$ is a direct sum of two irreducibles representations.

We show that $H$ is irreducible as a representation of $\cPL$.
For this, we show that every nonzero vector in $H$ is cyclic for $\cPL$.
We first show that each of $e_0,e_1$, and $e_2$ is cyclic.
We have $A_1 e_0 = e_1/\sqrt 2$ and $A_2e_0=e_2/\sqrt 2$, and so $e_0$ is cyclic. Since
$A_1^* e_1 = e_0/\sqrt 2$ and $A_2^*e_2 = e_0/\sqrt 2$, it follows that $e_1$ and $e_2$ are also cyclic.

Now to see that every nonzero vector is cyclic, fix $x=\sum_i x_i e_i\in H \setminus \{0\}$. First suppose that $x_0 \neq 0$. Then
$$A_2A_1 x = A_2(x_0/\sqrt 2 e_1 + x_2 e_2)= x_0/\sqrt 2 e_1$$
is a nonzero scalar multiple of the cyclic vector $e_1$, so $x$ is cyclic. Now suppose that $x_0 = 0$. Then
\[
A_1x = A_1(x_1 e_1 + x_2e_2) = x_2 e_2,\quad\text{ and }\quad A_2 x = A_2(x_1 e_1 + x_2e_2) = x_1e_1.
\]
Since one of $x_1, x_2$ is nonzero, one of these is a nonzero multiple of a cyclic vector, so we see that $x$ is cyclic.
\end{example}

We reinterpret some of the results of Proposition \ref{prop:decomposition} in a categorical manner.

\begin{corollary}\label{cor:categories}
A module is called full if it does not contain a nonzero residual subspace.
Let $\Mod_{\full,\fd}(\cAL)$ be the category whose objects are finite dimensional, full modules and whose arrows are intertwiners.
Let $\Rep_{\Pfd}(\cOL)$ be the full subcategory of $\Rep(\cOL)$ whose objects are the representations of $\cOL$ with finite Pythagorean dimension.
The  functor $\Pi$ restricts into an essentially surjective and fully faithful functor
$$\Pi_{\full,\fd}:\Mod_{\full,\fd}(\cAL)\to \Rep_{\Pfd}(\cOL);$$
that is: (a) for all $L\in \ob \Rep_{\fd}(\cOL)$ there exists a finite dimensional, full module $H$ such that $\Pi(H)\simeq L$; and (b) $\Pi_{\full,\fd}$ determines bijection on hom-spaces.
Conversely, the operation $L\mapsto (UL)_s$ that take a representation $L$ of $\cOL$, regards it as an $\cAL$-module, and then passes to its smallest complete submodule, defines a functor
$$\Sigma:\Rep_{\Pfd}(\cOL)\to \Mod_{\full,\fd}(\cAL).$$
This functor is essentially surjective and fully faithful. The pair of functors  $(\Pi_{\full,\fd},\Sigma)$ constitutes an equivalence of categories.
\end{corollary}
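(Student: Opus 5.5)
First I will check that $\Pi_{\full,\fd}$ and $\Sigma$ land in the claimed categories. If $H$ is finite dimensional and full, then trivially $\Pi(H)$ has finite P-dimension. So $\Pi$ restricts to a functor, since $\Rep_{\Pfd}(\cOL)$ is a full subcategory. For $\Sigma$, take $L$ of finite P-dimension. Then $UL$ has finite P-dimension, because $\Pi(UL)\simeq L$ by Proposition~\ref{prop:functor}(4). Proposition~\ref{prop:decomposition}(1),(2) then gives a smallest complete submodule $(UL)_s=H_1\oplus\cdots\oplus H_n$, which is finite dimensional. It is full: a residual subspace $Z\subset(UL)_s$ would make $(UL)_s\ominus Z$ weakly complete, hence complete by Proposition~\ref{prop:weak-complete} (apply it inside the finite-dimensional $(UL)_s$ and use transitivity of completeness via Proposition~\ref{prop:complete}(1)). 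This would contradict minimality. On arrows, I define $\Sigma(\theta)$ as the restriction of $\theta$ to $(UL)_s$. This lands in $(UL')_s$ by Proposition~\ref{prop:decomposition}(3), and functoriality of $\Sigma$ is immediate.

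Essential surjectivity of $\Pi_{\full,\fd}$: given $L\simeq\Pi(K)$ with $K$ finite dimensional, Proposition~\ref{prop:decomposition}(1) and (7) give $\Pi(K)\simeq\Pi(K_s)$ with $K_s$ complete. The previous paragraph shows $K_s$ is full, so we take $H=K_s$. For a full finite-dimensional $H$, the decomposition of Proposition~\ref{prop:decomposition}(1) has $Z=0$, so $H=H_s$. Proposition~\ref{prop:decomposition}(4) then identifies $J_H(H)$ with the smallest complete submodule of $U\Pi(H)$, i.e.\ $J_H: H\to\Sigma\Pi(H)$ is an isomorphism, natural by Proposition~\ref{prop:functor}(3). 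In the other direction, the inclusion $\Sigma(L)\hookrightarrow UL$ induces $\Pi\Sigma(L)\to\Pi U(L)\cong L$ (via $\wh J_L^{-1}$). This map is a unitary isomorphism because $\Sigma(L)$ is complete in $UL$, so its image is dense by Proposition~\ref{prop:complete}(1)–(2); naturality follows from the formula $[\lambda,\xi]\mapsto X_\lambda\xi$. These two natural isomorphisms give the equivalence, and essential surjectivity of $\Sigma$ follows.

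Fully faithful: for full finite-dimensional $H,K$, faithfulness holds because $\Pi(\theta)\circ J_H=J_K\circ\theta$ and $J_K$ is injective. For fullness, let $\phi:\Pi(H)\to\Pi(K)$ be an intertwiner of $\cOL$-representations. Then $\phi$ is also an $\cAL$-module map $U\Pi(H)\to U\Pi(K)$, and Proposition~\ref{prop:decomposition}(3),(4) shows $\phi(J_H H)\subset J_K K$. Hence $\theta:=J_K^{-1}\phi J_H$ is an intertwiner $H\to K$. Both $\Pi(\theta)$ and $\phi$ are $\cOL$-intertwiners agreeing on $J_H(H)$, which generates $\Pi(H)$ as a representation, so they coincide. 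The step I expect to need the most care is the application of Proposition~\ref{prop:decomposition}(3) to the possibly infinite-dimensional modules $U\Pi(H)$, which requires finite P-dimension of $U\Pi(H)$; this follows from Proposition~\ref{prop:functor}(4). The statement that both functors preserve irreducibility and isomorphism classes then follows from Proposition~\ref{prop:decomposition}(5),(6).
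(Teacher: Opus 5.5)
Your proposal is correct, but it organises the argument differently from the paper.

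\textbf{How the paper argues.} The paper proves full faithfulness of $\Pi_{\full,\fd}$ through semisimplicity. A full $H$ has no residual part, so $H=H_1\oplus\cdots\oplus H_n$ and $\Pi(H)=\bigoplus_i\Pi(H_i)$. Each $\Pi(H_i)$ is irreducible by Proposition~\ref{prop:decomposition}(6), and Schur's lemma is then invoked to match the hom-spaces. The corresponding claims for $\Sigma$, and the equivalence itself, are only asserted (``a similar argument'', ``calculate directly'' with the $J_H$).

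\textbf{How your argument differs.} You work with the smallest complete submodule as a functorial object.
\begin{enumerate}
\item Proposition~\ref{prop:decomposition}(3),(4) show that every $\cOL$-intertwiner $\phi:\Pi(H)\to\Pi(K)$ maps $J_H(H)=(U\Pi H)_s$ into $J_K(K)$. Since $J_H(H)$ generates $\Pi(H)$, this gives $\phi=\Pi(J_K^{-1}\phi J_H)$.
\item You write down both natural isomorphisms explicitly: the unit $J_H:H\to\Sigma\Pi(H)$, and the counit $\Pi\Sigma(L)\to L$, $[\lambda,\xi]\mapsto X_\lambda\xi$. The counit is unitary because $\Sigma(L)$ is complete in $UL$.
\end{enumerate}

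\textbf{What each route buys.} Your route gives an honest equivalence with explicit natural isomorphisms. Full faithfulness and essential surjectivity of $\Sigma$ then follow formally. It also avoids a Schur lemma for non-self-adjoint modules, the multiplicity bookkeeping, and any use of irreducibility of the $\Pi(H_i)$. The paper's route, in exchange, makes visible the semisimple block structure of the hom-spaces. Both routes ultimately rest on Lemma~\ref{lem:key}. You also supply an argument the paper leaves implicit: that $\Sigma(L)$ is full, via Proposition~\ref{prop:weak-complete} and transitivity of completeness.

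\textbf{One step to state explicitly.} You use twice, without saying so, that if $\iota:H'\to H$ is an isometric intertwiner then $\Pi(\iota)$ is isometric. This follows from the norm computation in the proof of Proposition~\ref{prop:functor}(1). It is what makes both your transitivity argument and the unitarity of the counit go through.
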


\begin{remark}
Our proof of Corollary~\ref{cor:categories} is not maximally efficient: we can just prove directly that $(\Pi_{\full,\fd},\Sigma)$ is an equivalence of categories. We have chosen to present
a proof that details why $\Pi_{\full,\fd}$ is well-defined, essentially surjective and fully faithful because we believe that this will help to elucidate the details of our construction,
and helps to clarify some important properties of $\Pi$.
\end{remark}

\begin{proof}[Proof of Corollary~\ref{cor:categories}]
We have already seen that $\Pi:\Mod(\cAL)\to\Rep(\cOL)$ defined on arrows $\theta$ of $\Mod(\cAL)$ by
$$\Pi(\theta)[\lambda,\xi]:=[\lambda,\theta(\xi)]$$
is a functor. By definition, if $H$ is a finite dimensional module, then $\Pi(H)$ has finite P-dimension.
Hence, $\Pi_{\full,\fd}:\Mod_{\full,\fd}(\cAL)\to \Rep_{\Pfd}(\cOL)$ is well-defined.

Fix a full, nonzero, finite dimensional module $H$.
Proposition~\ref{prop:decomposition}(1) yields irreducible submodules $H_i$ and a residual subspace $Z$ such that $H = H_1\oplus \cdots\oplus H_n\oplus Z$.
Since $H$ is full, $Z=\{0\}$. By definition of $\Pi$, the representation $\Pi(H)$ of $\cOL$ decomposes as $\pi(H) = \Pi(H_1)\oplus\cdots\oplus\Pi(H_n)$.

By Proposition~\ref{prop:decomposition}(6) each $\Pi(H_i)$ is an irreducible representation of $\cOL$. Hence Schur's Lemma implies that $\Pi_{\full,\fd}$ induces bijections on hom-spaces.

Fix $L\in \ob \Rep(\cOL)$ of finite P-dimension.
Since $\Pi:\Mod(\cAL)\to\Rep(\cOL)$ is essentially surjective and $L$ has finite P-dimension, there exists a finite-dimensional $\cAL$-module $H$ satisfying $\Pi(H)\simeq L$.
By~(1) there are a full submodule $H'$ of $H$ and a residual space $Z$ such that $H = H'\oplus Z$. Hence $\Pi(H')\simeq L$, so $\Pi_{\full,\fd}$ is essentially surjective. This proves the first part of the corollary.

Let $\Sigma$ be as in the statement. Then $\Sigma$ is the composition of the forgetful functor $U:\Rep(\cOL)\to\Mod(\cAL)$ (restricted to finite Pythagorean dimensional representations of $\cOL$)
followed by passing to the smallest complete submodule. The latter is well-defined on $\cAL$-modules of finite P-dimension by Statements (1)~and~(2) of Proposition \ref{prop:decomposition}.
The resulting module is finite dimensional, and is full by minimality. Hence, $L\mapsto \Sigma(L)$ is well-defined from representations of $\cOL$ of finite P-dimension to full, finite-dimensional modules.
Any intertwiner $\theta:L\to L'$ of representations of $\cOL$ maps $\Sigma(L)$ to $\Sigma(L')$ by Proposition~\ref{prop:decomposition}(3).
Since $\theta$ intertwines the $\cOL$-actions we deduce that the restriction of $\Sigma(L)\to \Sigma(L')$ intertwines the module structures.
Hence $\Sigma$ is a well-defined functor from $\Rep_{\Pfd}(\cOL)$ to $\Mod_{\full,\fd}(\cAL)$.

To see that $\Sigma$ is essentially surjective and fully faithful, we employ a similar argument to the one above (that is, using semi-simplicity and Schur's lemma).
To see that $\Pi_{\full,\fd}$ and $\Sigma$ constitute an equivalence of categories, just calculate directly to see that the natural transformations given by the
maps $J_H:H\to \Pi(H)$ of Proposition~\ref{prop:functor} satisfy the requisite intertwining relations.
\end{proof}

\section{Pythagorean dimension vector} \label{sec:dimension}

Recall from Definition~\ref{def:finite-P-dimension} that $L \in \ob \Rep(\cOL)$ has finite Pythagorean dimension if there exists a finite dimensional $\cAL$-module $H$ such that $\Pi(H)\simeq L$. Expanding on this,
we define the \emph{dimension vector} of a representation of $\cOL$.

\subsection{The usual dimension vector}

Let $H$ be a $\cAL$-module (note that this includes all representations of either $\cPL$ or $\cOL$). Then $H$ decomposes as $H = \oplus_{v\in \Lambda^0} a_v \cdot H$; we define $H_v := a_v \cdot H$ for each $v \in \Lambda^0$. We define the \emph{dimension vector of $H$} by
$$\dim_\Lambda(H):\Lambda^0\to \N\cup\{\infty\},\ v\mapsto \dim(H_v)$$
(here $\dim(H_v)$ is just the complex vector-space dimension of $H_v$).
We call $\dim_\Lambda(H)$ the \emph{usual dimension vector} of the representation $H$
and we call $\dim(H) = \sum_{v \in \Lambda^0} \dim(H_v) = \|\dim_\Lambda(H)\|_1$ the \emph{usual dimension} of $H$.

\subsection{Pythagorean dimension vector}

\begin{definition}
Fix $L\in\ob \Rep(\cOL)$. Let $\Pi:\Mod(\cAL)\to\Rep(\cOL)$ be the (essentially surjective) Pythagorean functor of Proposition~\ref{prop:functor}.
If there does not exist any $H\in\ob\Mod(\cAL)$ with $\dim(H)<\infty$ satisfying $\Pi(H) \simeq L$, then we say that $L$ has \emph{infinite Pythagorean dimension}.
Otherwise we say that $L$ has \emph{finite Pythagorean dimension}.
\end{definition}

\begin{proposition}\label{prop:Pdim}
Fix $L\in\ob \Rep(\cOL)$ and suppose that $L$ has finite Pythagorean dimension. Then the following three dimension vectors are well-defined and equal:
\begin{enumerate}
\item $\dim_\Lambda\circ \Sigma(L)$;
\item the coordinatewise minimum of $\{\dim_\Lambda(K):\ K\in \ob \Mod(\cAL)\text{ and } \Pi(K)\simeq L\}$;
\item the coordinatewise minimum of $\{\dim_\Lambda(H'):\ H'\subset UL\text{ is a complete submodule}\}$,
\end{enumerate}
where here $\Sigma$ denotes the object-map of the functor $\Sigma$.
\end{proposition}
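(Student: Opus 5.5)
The plan is to show that $\Sigma(L)$ realises both minima, by proving that the dimension vector of $\Sigma(L)$ is bounded above coordinatewise by every vector appearing in (2) and in (3). Since $\Sigma(L)$ itself appears in both sets, each set then has a coordinatewise minimum attained at $\Sigma(L)$. The key fact throughout is that $\Sigma(L)=(UL)_s$ is the smallest complete submodule of $UL$. This holds by Proposition~\ref{prop:decomposition}(2), applied to the $\cAL$-module $UL$, which has finite P-dimension because $\Pi(UL)\simeq L$ by Proposition~\ref{prop:functor}(4). Moreover $\Sigma(L)$ is finite dimensional, so all quantities in (1) are finite and (1) is well-defined.

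For (3), let $H'\subset UL$ be a complete submodule. By Proposition~\ref{prop:decomposition}(2) we have $(UL)_s\subset H'$. Since both are submodules, the projections $a_v$ preserve them, so $(UL)_s\cap (UL)_v\subset H'\cap (UL)_v$, that is $a_v\cdot (UL)_s\subset a_v\cdot H'$. Hence $\dim_\Lambda(\Sigma(L))\le \dim_\Lambda(H')$ coordinatewise. Moreover $\Sigma(L)$ is itself a complete submodule of $UL$, so it belongs to the family in (3). Therefore the coordinatewise minimum in (3) exists and equals $\dim_\Lambda\Sigma(L)$.

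For (2), first note that $\Sigma(L)$ is in the family: it is complete in $UL$, so by Proposition~\ref{prop:complete}(1) we get $\Pi(\Sigma(L))=\Pi(UL)\simeq L$. Now let $K$ be any module with $\Pi(K)\simeq L$, and fix an isomorphism $\theta:\Pi(K)\to L$. If $K$ is infinite dimensional at some vertex, the inequality at that coordinate is trivial, so only finite coordinates matter. The cleanest route is to identify $K$ with $J_K(K)\subset U\Pi(K)$, which is a complete submodule (by definition of $\Pi(K)$, i.e.\ Proposition~\ref{prop:complete}(1)). Then $\theta$, being an intertwiner of $\cOL$-representations, is a unitary $\cAL$-module isomorphism $U\Pi(K)\to UL$. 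It therefore maps complete submodules to complete submodules, since completeness is characterised by condition~(1) or (2) of Proposition~\ref{prop:complete}, both of which are preserved by unitary intertwiners. It also preserves the vertex decomposition, because $\theta$ intertwines the $X_v$. Thus $\theta(J_K(K))$ is a complete submodule of $UL$ with $\dim_\Lambda\theta(J_K(K))=\dim_\Lambda(K)$. By the argument for (3), $\dim_\Lambda\Sigma(L)\le\dim_\Lambda K$.

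The main obstacle I expect is the step in (2) where $K$ is not assumed finite dimensional, so Proposition~\ref{prop:decomposition}(2) should not be applied to $K$ directly. The route above avoids this by working entirely inside $UL$, where finite P-dimension is available. The points to check carefully there are that completeness transfers along $\theta$ and that $\theta$ respects the vertex grading. Both follow because $\theta$ commutes with all $X_\lambda$ and $X_\lambda^*$.
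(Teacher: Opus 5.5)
Your proof is correct, but it is organised differently from the paper's. The paper takes an arbitrary $H$ with $\Pi(H)\simeq L$ and applies Proposition~\ref{prop:decomposition} to $H$ itself. This is legitimate because such an $H$ automatically has finite P-dimension: Definition~\ref{def:finite-P-dimension} only asks that $\Pi(H)\simeq\Pi(K)$ for some finite-dimensional $K$. Hence $H$ has a smallest complete submodule $H_s\subseteq H$, and by parts (3)--(5) of that proposition $H_s\simeq\Sigma(L)$ as modules. It follows that $\dim_\Lambda\Sigma(L)=\dim_\Lambda H_s\le\dim_\Lambda H$ coordinatewise, and (3) is handled the same way.

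So the obstacle you anticipate is not really there. Proposition~\ref{prop:decomposition} is stated for modules of finite P-dimension, not finite dimension, and it applies directly to your $K$.

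Your route instead invokes Proposition~\ref{prop:decomposition}(2) only once, for the module $UL$. You then transport each $K$ into $UL$ as the complete submodule $\theta(J_K(K))$, which has the same dimension vector. This reduces (2) to (3) and avoids the Schur-type invariance statement (5) altogether. The price is checking that completeness transfers along $\theta$, which is clear from the characterisation ``generates the whole $\cOL$-representation'' together with naturality of $\widehat J$. The paper's version gives a little more, namely that the module isomorphism class of $H_s$ is independent of $H$. Yours reaches the equality of the three vectors more economically.

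One small correction: an isomorphism in $\Rep(\cOL)$ is only a bounded invertible intertwiner, not necessarily a unitary. This does not hurt your argument, which uses only the following properties of $\theta$:
\begin{enumerate}
\item it is bounded below, so it maps closed subspaces to closed subspaces;
\item it commutes with the $X_v$ and the $X_\lambda^*$;
\item it maps generating subspaces to generating subspaces.
\end{enumerate}
Every invertible intertwiner has these properties. Alternatively, you can replace $\theta$ by the unitary part of its polar decomposition, which is again an intertwiner.
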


\begin{definition}
Fix $L\in\ob \Rep(\cOL)$ and suppose that $L$ has finite Pythagorean dimension. We call the vector $\Pdim_\Lambda(L)$ of Proposition~\ref{prop:Pdim} the \emph{Pythagorean dimension vector}, or just the \emph{P-dimension vector}, of $L$. We define the \emph{Pythagorean dimension}, or just the \emph{P-dimension}, of $L$ to be
$$
    \Pdim(L):=\sum_{v\in\Lambda^0} \Pdim_\Lambda(L)(v).
$$
\end{definition}

\begin{proof}[Proof of Proposition~\ref{prop:Pdim}]
Fix an $\cAL$-module $H$ with $\Pi(H)\simeq L$.
By Proposition~\ref{prop:decomposition}(1) there are irreducible submodules $H_i$ of $H$ and a residual subspace $Z$ such that $H = H_1\oplus\cdots\oplus H_n\oplus Z$;
and by Proposition~\ref{prop:decomposition}(2)~and~(3), $H_s := H_1\oplus\cdots\oplus H_n$ is a complete submodule of $H$, and is unique up to isomorphism of modules.
Hence, the usual dimension vector $\dim_\Lambda(H_s)$ of $H_s$ is an invariant of $L$: it does not depend on the choice of $H$.
By Proposition~\ref{prop:decomposition}(4), $H_s\subset H$ is the smallest complete submodule of $H$, so $\Pdim_\Lambda(L) := \dim_\Lambda\circ \Sigma(L)$ is well defined.
That this agrees with the coordinatewise minima in (2)~and~(3) for the same reason.
\end{proof}

We deduce a new dimension vector for $\cAL$-modules.

\begin{definition}
Consider an $\cAL$-module $H$.
If for every complete submodule $H'\subset H$ we have $\dim(H')=\infty$, then we say that $H$ has infinite Pythagorean dimension and we do not define any Pythagorean dimension vector on $H$.
Otherwise, we set the \emph{Pythagorean dimension} of $H$ and the \emph{Pythagorean dimension vector} of $H$ to be the Pythagorean dimension and the Pythagorean dimension vector of $\Pi(H)$, and denote them by
$$ \Pdim^{\cAL}(H) \text{ and }  \Pdim^{\cAL}_\Lambda(H),$$
respectively.
These define maps on $\ob\Mod_{\Pfd}(\cAL)$ ($\cAL$-modules with finite P-dimension) satisfying the equalities
$$\Pdim_\Lambda^{\cAL}=\Pdim_\Lambda\circ \Pi \text{ and } \Pdim^{\cAL}=\Pdim\circ \Pi.$$
\end{definition}

\begin{remark}
If $H=H'\oplus Z$ with $H'$ a complete submodule $Z$ a residual subspace, then $\Pdim^{\cAL}(H)\leq \dim(H')$.
We have equality if $H'$ is the smallest complete submodule.

The definition of the Pythagorean dimension of a representation of $\cOL$ depends on the choice of the $k$-graph $\Lambda$. Our notation does not make this explicit because typically a fixed $k$-graph $\Lambda$ is given.
In Section~\ref{sec:example} we exhibit two graphs yielding two different Pythagorean dimension functions for the same representations of the Cuntz algebra $\cO_2$: we provide explicit examples where these dimension functions differ.
\end{remark}

Using our earlier classification results, we deduce the following.

\begin{corollary}\label{cor:classification}
Let $\Lambda$ be a row-finite and locally convex $k$-graph. For any finitely supported dimension vector $d:\Lambda^0\to\N$ the functors $\Pi$ and $\Sigma$ define bijections between the irreducible classes of $\cAL$-modules with usual dimension vector $d$ and the irreducible classes of representations of $\cOL$ of Pythagorean dimension vector $d$.
\end{corollary}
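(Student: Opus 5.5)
The plan is to read the statement as the restriction of the equivalence of Corollary~\ref{cor:categories} to irreducible objects of a fixed dimension vector. Here ``irreducible classes of $\cAL$-modules'' means isomorphism classes of irreducible modules in $\Mod(\cAL)$. Fix a finitely supported $d:\Lambda^0\to\N$.

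\textbf{Step 1: $\Pi$ sends the first set into the second.} Let $H$ be an irreducible $\cAL$-module with $\dim_\Lambda(H)=d$. Then $H$ is finite dimensional, since $d$ is finitely supported and finite-valued. Irreducibility means every decomposition $H=H'\oplus Z$ has $Z=\{0\}$, so $H$ is full. Its decomposition from Proposition~\ref{prop:decomposition}(1) is just $H=H_1$ with $Z=\{0\}$. By Proposition~\ref{prop:decomposition}(6), $\Pi(H)$ is irreducible. Moreover, $H$ is the smallest complete submodule of itself, so by Proposition~\ref{prop:decomposition}(4) it is also the smallest complete submodule of $U\Pi(H)$. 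That is, $\Sigma(\Pi(H))\simeq H$, and Proposition~\ref{prop:Pdim} gives $\Pdim_\Lambda(\Pi(H))=\dim_\Lambda(H)=d$.

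\textbf{Step 2: $\Sigma$ sends the second set into the first.} Let $L$ be an irreducible representation of $\cOL$ with $\Pdim_\Lambda(L)=d$. Then $L$ has finite P-dimension, and $\Sigma(L)=(UL)_s$ is a finite dimensional full module with $\dim_\Lambda\Sigma(L)=d$, by Proposition~\ref{prop:Pdim}(1). By Corollary~\ref{cor:categories} we have $\Pi(\Sigma(L))\simeq L$, which is irreducible. Proposition~\ref{prop:decomposition}(6) then forces $\Sigma(L)=H_1\oplus Z$ with $H_1$ irreducible and $Z$ residual. Fullness gives $Z=\{0\}$, so $\Sigma(L)$ is irreducible.

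\textbf{Step 3: well-definedness on classes and mutual inverseness.} Both $\Pi$ and $\Sigma$ are functors, so they send isomorphic objects to isomorphic objects. Hence they descend to maps between the two sets of classes. Step 1 gives $\Sigma\Pi(H)\simeq H$, and Step 2 gives $\Pi\Sigma(L)\simeq L$, so the two maps are mutually inverse bijections. Injectivity of $\Pi$ on classes can also be read off from Proposition~\ref{prop:decomposition}(5): for irreducible $H,K$ we have $H_s=H$ and $K_s=K$, so $\Pi(H)\simeq\Pi(K)$ forces $H\simeq K$.

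\textbf{Main obstacle.} The delicate point is justifying $\Pi\Sigma(L)\simeq L$ and the identification $\Sigma\Pi(H)\simeq H$, which rest on the equivalence in Corollary~\ref{cor:categories}. If I wanted to avoid relying on its sketched proof, I would argue directly. Choose a finite dimensional $K$ with $\Pi(K)\simeq L$. Since $K_s$ is complete, Proposition~\ref{prop:complete}(1) gives $\Pi(K_s)=\Pi(K)$. By Proposition~\ref{prop:decomposition}(3),(4), any isomorphism $\Pi(K)\to L$ carries $K_s$ onto $(UL)_s=\Sigma(L)$. This yields $\Sigma(L)\simeq K_s$ and hence $\Pi(\Sigma(L))\simeq L$.
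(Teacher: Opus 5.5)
Your argument is correct and is essentially the paper's proof, which consists only of the citation of Propositions~\ref{prop:decomposition} and~\ref{prop:Pdim}. Your Steps~1--3 unpack exactly that citation: irreducible modules are full with $H_s=H$, so parts (4) and (6) give that $\Pi(H)$ is irreducible with $\Sigma\Pi(H)\simeq H$ and P-dimension vector $d$, and part (6) together with fullness makes $\Sigma(L)$ irreducible. Your fallback derivation of $\Pi\Sigma(L)\simeq L$ from Proposition~\ref{prop:decomposition}(3),(4) and Proposition~\ref{prop:complete}(1) is the argument behind Proposition~\ref{prop:decomposition}(5), so it soundly removes any dependence on the sketched proof of Corollary~\ref{cor:categories}.
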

\begin{proof}
This follows from Propositions \ref{prop:decomposition} and \ref{prop:Pdim}.
\end{proof}

\section{Moduli spaces for representations of 1-graph \texorpdfstring{C$^*$}{C*}-algebras}\label{sec:moduli spaces}

We now derive sharper results for the specific case of ordinary directed graphs (that is, 1-graphs) $\Lambda$ as opposed to \emph{higher} rank graphs.

\begin{center}\textbf{From now on we assume that $\Lambda$ is an ordinary directed graph.}
\end{center}

We continue to identify $\Lambda$ with the path space of the underlying graph; that is, $(\Lambda^0, \Lambda^1, r, s)$ is a countable row-finite directed graph in the sense of \cite{Raeburn05},
and $\Lambda$ is its category of finite paths. Every $1$-graph is trivially locally convex.

The aim of this section is to define some geometric objects (certain smooth real manifolds) in natural bijection with components of the spectrum of $\cOL$: there will be one component per finite P-dimension vector $d$.

\begin{center}\textbf{From now on we fix a dimension vector $d:\Lambda^0\to\N\cup\{\infty\}, v\mapsto dv$ that is finite, i.e.~$\sum_{v\in\Lambda^0}dv<\infty$. }\end{center}

\subsection{Matrix modules of \texorpdfstring{$\cAL$}{AΛ}}
We introduce coordinates by defining $H_v:=\C^{dv}$ for each $v\in \Lambda^0$.
Set $$M(d):=\bigoplus_{\lambda\in \Lambda^1} M_{ds(\lambda),dr(\lambda)}(\C);$$
that is, $M(d)$ is the direct sum of the vector spaces $M_{ds(\lambda), dr(\lambda)}(\C)$ of $ds(\lambda) \times d(r\lambda)$ complex matrices, indexed by edges $\lambda \in \Lambda^1$.
For $A=(A_\lambda)_\lambda \in M(d)$ and $v\in \Lambda^0$, define
\[
    R_v=\oplus_{\lambda\in v\Lambda^1} A_\lambda,
\]
the $|v\Lambda^1| \times 1$ block column matrix with blocks $A_\lambda, \lambda \in v\Lambda^1$.
This is a $D(v)\times dv$ matrix, where $D(v):=\sum_{\lambda\in v\Lambda^1} ds(\lambda)$.

The matrix $A$ defines an $\cAL$-module structure on $H:=\oplus_{v\in \Lambda^0} H_v$ if and only if each $R_v$ is an isometry; that is, if and only if $R_v^*R_v=I_{dv}$ for each $v\in\Lambda^0$.
We write $\mr(d)$ for the collection of tuples with this block-isometry property. An element of $\mr(d)$ is called a \emph{$d$-dimensional matrix module}.

\begin{remark}\label{rem:nonempty}
	\begin{enumerate}
		\item Assume that $H$ is a $d$-dimensional matrix module.
		For each vertex $v$ the matrix $R_v : H_v \to \oplus_{\lambda \in v\Lambda^1} H_{s(\lambda)}$ is an isometry. Hence
		\begin{equation}\label{eq:dimension}
            dv\leq \sum_{\lambda\in v\Lambda^1} ds(\lambda)=: D(v).
        \end{equation}
		Hence, if $\mr(d)$ is nonempty, then $dv\leq D(v)$ for all $v\in\Lambda^0.$
		Conversely, for any fixed $v$ such that $dv \leq D(v)$, there exists an isometry
        $R_v : H_v \to \oplus_{\lambda\in v\Lambda^1} H_{s(\lambda)}$ by dimension considerations. So if $dv\leq D(v)$ for all $v$, then we can choose
        isometries $\big(R_v: H_v \to \oplus_{\lambda\in v\Lambda^1} H_{s(\lambda)}\big)_{v \in \Lambda^0}$,
        and hence there exists an $\cAL$-matrix module representation with dimension vector $d$.
		Hence
		$$
            \mr(d)\neq\emptyset \Leftrightarrow \eqref{eq:dimension} \text{ holds for all vertices}.
        $$
		Note that even when $\mr(d) \not= \emptyset$, there may be no \emph{irreducible} modules in $\mr(d)$.
		This occurs in the first example of Section~\ref{sec:example-CK} for almost all finite $d$.
		\item Let $\Lambda$ be the rose with $n$ petals---a single vertex $v$ with $n$ loops. So $\cOL = \cO_n$. Then a $d$-dimensional matrix module of $\cAL$ consists of matrices $A_1, \dots, A_n \in B(\C^d) = M_d(\C)$. The matrix $R_v$ is the $dn \times n$ matrix whose $i$th $n \times n$ block is $A_i$. This $R_v$ is an isometry precisely if $R_v^*R_v = I_d$. Here, \eqref{eq:dimension} trivially holds for all $d$-dimensional matrix modules.
	\end{enumerate}
\end{remark}

\begin{center}
\textbf{From now on we assume that $d$ satisfies $dv\leq D(v) \text{ for all } v\in \Lambda^0.$}
\end{center}

\subsection{The manifold structure on \texorpdfstring{$\mr(d)$}{MR(d)}}
We show that $\mr(d)$ is a compact submanifold of $M(d)$ and compute its dimension.
We do this by expressing $\mr(d)$ as the preimage of a point under the submersion $f$ defined below.
For $v \in \Lambda^0$, let
\[
    H(dv) :=(C_{v} \in M_{dv}(\C):\ C_{v}=C_{v}^*,\ v\in\Lambda^0),
\]
the Hermitian matrix spaces of size $dv$. Define
\[
    H(d):=\bigoplus_{v\in\Lambda^0} H(dv).
\]
Define $f:M(d)\to H(d)$ by
\[
    f\big((A_\lambda)_\lambda\big) := \left(\sum_{\lambda\in v\Lambda^1} A_\lambda^*A_\lambda\right)_{v\in\Lambda^0}.
\]
That is, $f$ sends each tuple of matrices indexed by edges to the tuple indexed by vertices whose $v$th entry is $\sum_{\lambda\in v\Lambda^1} A_\lambda^*A_\lambda$.
Write $I(d)$ for the identity element of $H(d)$, which is to say the direct sum $\bigoplus_{v \in \Lambda^0} I_{dv}$ of identity matrices. Then
\[
\mr(d) = f^{-1}(I(d)).
\]
The map $f$ is clearly smooth for the usual \emph{real} differential structures of the vector spaces $M(d)$ and $H(d)$.
(It is not complex-analytic.)

To simplify our discussion, regroup the elements of $M(d)$ into block matrices of size $D(v)\times dv$ by considering $R_v:=\oplus_{\lambda\in v\Lambda^1} A_\lambda$ as above.
In this picture, $f$ is given by
\[
    f:\oplus_{v\in\Lambda^0} M_{d(v),D(v)}(\C)\to \oplus_{v\in\Lambda^0}H(d(v)),\ f(R)=(R_v^*R_v)_{v\in\Lambda^0}.
\]

Fix a point $R=(R_v)_{v} \in M(d)$. The derivative of $f$ at $R$ is the (real) linear map
$$
    df_R:M(d)\to H(d),\ X=(X_{v})_{v}\mapsto R^*X+X^*R = (R_{v}^*X_{v} + X^*_{v} R_{v})_{v}.
$$
So to see that $f^{-1}(I(d))$ is a smooth submanifold of $M(d)$ it suffices to prove that $df_R$ is a surjective linear map for each $R \in f^{-1}(I(d)) \subset M(d)$ (see \cite[Corollary 5.14]{Lee12}).

For this, just note that for every $K=(K_{v})_{v}\in H(d)$, we have $K = df_R(RK/2) \in \operatorname{image}(df_R)$.
Hence, $\mr(d) = f^{-1}(I(d))$ is a smooth submanifold of $M(d)$ as claimed. Moreover, we can compute the dimension (as a real vector space) of $\mr(d)$ in terms of those of $M(d)$ and $H(d)$ as
$$
    \dim(\mr(d))=\dim(M(d))-\dim(H(d)).
$$
Recall that for $v \in \Lambda^0$,
\[
    D(v) =\sum_{\lambda\in v\Lambda^1} ds(\lambda).\
\]
By definition,
$$
    \dim(M(d))=\sum_{v\in\Lambda^0} 2 D(v) \cdot dv =\sum_{v\in \Lambda^0}  \sum_{\mu \in v\Lambda^1} 2 ds(\mu)\cdot dv = 2\sum_{\lambda \in \Lambda^1} ds(\lambda) \cdot dr(\lambda)
$$
Hence
$$
    \dim(H(d)) = \sum_{v\in\Lambda^0} \dim(H(dv))= \sum_{v\in \Lambda^0} dv^2,
$$
and so
$$
    \dim(\mr(d))=\sum_{v\in\Lambda^0} dv (2D(v) - dv) = 2\sum_{\lambda\in \Lambda^1} ds(\lambda) \cdot dr(\lambda) - \sum_{v\in \Lambda^0} dv^2.
$$
By assumption $dv\leq D(v)$ for all $v\in\Lambda^0$ (see Remark \ref{rem:nonempty}), so
$$
    \dim(\mr(d)\geq \sum_{v\in \Lambda^0} dv^2.
$$
Since $f$ is continuous, $\mr(d)$ is closed. Moreover, for each module $(A_\lambda)_\lambda$ we have $\|A_\lambda\|\leq 1$ for each $\lambda\in\Lambda^1$.
Hence $\mr(d)$ is closed and bounded and thus compact since  $M(d)$ is finite dimensional. Putting all this together, we have proved the following.

\begin{proposition}
The set $\mr(d)$ of $d$-dimensional matrix modules over $\cAL$ forms a compact smooth submanifold of $M(d):=\oplus_{\lambda\in\Lambda^1} M_{ds(\lambda),dr(\lambda)}(\C)$ of real dimension
$$\dim(\mr(d))=\sum_{v\in\Lambda^0} dv (2D(v) - dv) = 2\sum_{\lambda \in \Lambda^1} ds(\lambda) \cdot dr(\lambda) - \sum_{v \in \Lambda^0} dv^2.$$
Moreover, $\dim(\mr(d))\geq \sum_{v\in\Lambda^0} (dv)^2.$
\end{proposition}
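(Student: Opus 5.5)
The plan is to realise $\mr(d)$ as a regular level set $f^{-1}(I(d))$ of the smooth map $f\colon M(d)\to H(d)$, $f(R)=(R_v^*R_v)_{v}$, and then invoke the regular value theorem \cite[Corollary 5.14]{Lee12}. I will work throughout with the real smooth structures on the finite-dimensional real vector spaces $M(d)$ and $H(d)$, regrouping each tuple $(A_\lambda)_\lambda$ into the block columns $R_v=\oplus_{\lambda\in v\Lambda^1}A_\lambda$. In this picture $M(d)\cong\bigoplus_v M_{D(v),dv}(\C)$, and the block-isometry condition defining a matrix module says precisely that $R_v^*R_v=I_{dv}$ for all $v$.

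The key step is to show that $df_R$ is surjective at every $R\in f^{-1}(I(d))$. The derivative is the real-linear map
\[
df_R(X)=(R_v^*X_v+X_v^*R_v)_v .
\]
Given a Hermitian tuple $K=(K_v)_v$, I will take $X=RK/2$. Then
\[
R^*X+X^*R=\tfrac12(R^*RK+KR^*R)=K,
\]
because $R^*R=I$ and $K=K^*$. This is the only point where any real input is needed, and it is straightforward, so I do not expect a genuine obstacle. The one thing to keep in mind is that $H(d)$ is only a real vector space, which is why $f$ has to be treated as a real smooth map rather than a holomorphic one. Once surjectivity holds, $\mr(d)$ is an embedded submanifold with
\[
\dim\mr(d)=\dim_\R M(d)-\dim_\R H(d).
\]

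For the dimension count, $\dim_\R M_{D(v),dv}(\C)=2D(v)\,dv$ and $\dim_\R H(dv)=dv^2$. This gives $\sum_v dv(2D(v)-dv)$. Rewriting $\sum_v D(v)\,dv$ as $\sum_{\lambda\in\Lambda^1}ds(\lambda)\,dr(\lambda)$ by summing over edges gives the second formula. The standing assumption $dv\le D(v)$ then yields $2D(v)-dv\ge dv$, and hence $\dim\mr(d)\ge\sum_v dv^2$.

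For compactness, $\mr(d)$ is closed because it is the preimage of a point under a continuous map. It is bounded because each $A_\lambda$ is a block of an isometry $R_{r(\lambda)}$, so $\|A_\lambda\|\le1$. By Heine–Borel in the finite-dimensional space $M(d)$, which is legitimate since $d$ is finitely supported, $\mr(d)$ is compact.
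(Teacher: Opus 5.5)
Your proposal is correct and follows essentially the same route as the paper. Both arguments realise $\mr(d)$ as the level set $f^{-1}(I(d))$, prove surjectivity of $df_R(X)=R^*X+X^*R$ with the choice $X=RK/2$, obtain the dimension as $\dim M(d)-\dim H(d)$, and deduce compactness from closedness together with the bound $\|A_\lambda\|\le 1$. One small point: the finite-dimensionality of $M(d)$ also uses row-finiteness, not only finite support of $d$, but row-finiteness is a standing assumption, so nothing is missing.
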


\subsection{The manifold structure of irreducible matrix modules of \texorpdfstring{$\cAL$}{AΛ}}

Consider the subset $\Irr(d)\subset \mr(d)$ of matrix modules that are \emph{irreducible}.
We show that $\Irr(d)$ is an open subset of $\mr(d)$.
This will imply that $\Irr(d)$ is either empty or is a smooth submanifold of $\mr(d)$ of same dimension.
Section \ref{sec:example-CK} provides examples where both situations occur.

Consider a $d$-dimensional module structure $\pi:a_\lambda\mapsto A_\lambda$ on $H:=\oplus_{v\in\Lambda^0}\C^{dv}$; we write $H_v := \C^{dv}$ for each $v$.
This module is irreducible if $H$ contains no nontrivial proper subspace closed under the $A_\lambda$.
Equivalently, $H$ is irreducible if for every collection of subspaces $K_v \subset H_v$ for which $K := \bigoplus_{v \in \Lambda^0} K_v$ satisfies $\{0\}\neq K\neq H$, there exists $\lambda \in \Lambda^1$ satisfying $p_K^\perp A_\lambda p_K\neq 0$. Here, $p_K$ is the orthogonal projection from $H$ onto $K$ and $p_K^\perp=\id-p_K$.

Consider now
$$U(d):=\bigoplus_{v\in\Lambda^0}U(dv)$$
to be the direct product of the unitary groups $U(dv)= U(H_v)$.
By assumption $v\mapsto dv$ is finitely supported, so $U(d)$ is a finite direct product of unitary groups, which acts by conjugation on $M(d)$ and on $\mr(d)$.

We use $U(d)$ to generate all projections $p_K$ (with $K\subset H$ of the form $\oplus_v K_v$ as above) using finitely many $K$'s.
For this, let $D$ denote the set of all dimension vectors $f=(fv)_{v\in\Lambda^0}$ satisfying $fv\leq dv$ for all $v\in\Lambda^0$ and $0\neq f\neq d$.
Note that $D$ is finite. For each $f\in D$ and each $v \in \Lambda^0$, let $K(f)_v := \C^{fv} \subset \C^{dv} = H_v$ (i.e.~$K(f)_v=\text{span}\{e_1,\cdots,e_{fv}\}$ where $\{e_1,\cdots,e_{dv}\}$ is the standard basis of $\C^{dv}$)
and define $K(f) = \bigoplus_v K(f)_v \subset H$.
Then the union of orbits $$\{u(K(f)) : u\in U(d)\text{ and } f\in D\}$$ is precisely $\{K' \le H :\ K' = \bigoplus_v (K' \cap H_v)\}$.
We have the usual conjugating relation
$$
    u p_{K(f)} u^*=p_{u(K(f))}.
$$
The module $\pi$ is reducible if and only if there exists $f\in D$ and $u\in U(d)$ such that
$$
    p_{K(f)}^\perp u \pi(a_\lambda) u^* p_{K(f)} =0 \text{ for all } \lambda\in \Lambda^1.
$$
We claim that the set of reducible modules is closed in $\mr(d)$.
Indeed, fix $f\in D$ and write
\[
    \Red(f) := \{\pi\in \mr(d) :\ p_{K(f)}^\perp \pi(a_\lambda) p_{K(f)} =0 \text{ for all } \lambda\in \Lambda^1\}.
\]
Then $\Red(f)$ is closed by continuity of matrix multiplication. Consider the map
$$
    U(d)\times \cup_{f\in D}\Red(f)\to \mr(d), (u, \pi)\mapsto u\pi(\cdot) u^*
$$
This map is smooth, and in particular continuous. Moreover, $\cup_{f\in D} \Red(f)$ is a finite union of closed sets, and hence closed,
and therefore compact since $\mr(d)$ is compact.
Hence $U(d)\times \cup_{f\in D}\Red(f)$ is compact. Therefore its image under the continuous map described above is compact,
and in particular closed. Hence the set of reducible module is closed in $\mr(d)$. Thus $\Irr(d)$ is open.

\begin{proposition} \label{prop:irrep-manifold}
The set of irreducible matrix modules $\Irr(d)$ is open in $\mr(d)$. If it is nonempty, then it is a smooth submanifold of dimension
$$
    \dim(\Irr(d)) = \dim(\mr(d)) = \sum_{v\in\Lambda^0} dv(2D(v)-dv) = 2\sum_{\lambda \in \Lambda^1} ds(\lambda) \cdot dr(\lambda) - \sum_{v \in \Lambda^0} dv^2,
$$
where $D(v) = \sum_{\lambda \in s(\Lambda^1)}ds(\lambda)$.
\end{proposition}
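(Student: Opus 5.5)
The argument reduces to two facts: the reducible modules form a closed subset of $\mr(d)$, and an open subset of a manifold is a submanifold of the same dimension. The preceding discussion already gives both; the proof only has to put them together with the dimension count from the previous proposition.

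The first step is to confirm that reducibility is exactly the condition used to define the sets $\Red(f)$. Take $K\subset H$ to be a submodule with $\{0\}\neq K\neq H$. Because $A_v$ is the projection onto $H_v$ and $K$ is invariant under every $A_v$, we have $K=\bigoplus_v (K\cap H_v)$. Set $f v := \dim(K\cap H_v)$; then $f\in D$. Choose $u\in U(d)$ with $u(K(f))=K$, which exists because each $U(dv)$ acts transitively on subspaces of $\C^{dv}$ of a fixed dimension. Invariance of $K$ under all $A_\lambda$ is then equivalent to $p_{K(f)}^\perp u^*\pi(a_\lambda)u\,p_{K(f)}=0$ for all $\lambda$, i.e. to $u^*\pi(\cdot)u\in\Red(f)$. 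Hence the set of reducible modules equals the image of $U(d)\times\bigcup_{f\in D}\Red(f)$ under the conjugation map. Here one checks that conjugation by $U(d)$ preserves $\mr(d)$: the block-isometry condition $R_v^*R_v=I_{dv}$ is invariant under $R_v\mapsto (\oplus_{\lambda}u_{s(\lambda)})R_v u_v^*$.

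Closedness of this image follows from three facts. First, $D$ is finite because $d$ is finitely supported, so $U(d)$ is a finite product of compact groups. Second, each $\Red(f)$ is a closed subset of the compact space $\mr(d)$. Third, the continuous image of a compact set is compact, hence closed in the Hausdorff space $\mr(d)$. It follows that $\Irr(d)$ is open in $\mr(d)$.

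For the manifold statement, any nonempty open subset of a smooth manifold is an open submanifold of the same dimension: the charts of $\mr(d)$ restrict to it. Substituting the formula for $\dim(\mr(d))$ from the preceding proposition gives the claimed value. The only point needing care is the identification of submodules with orbits $u(K(f))$, in particular that submodules split along the vertex decomposition; everything else is routine compactness.
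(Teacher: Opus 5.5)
Your proposal is correct and follows the paper's proof essentially step for step: the reducible modules are identified with the image of the compact set $U(d)\times\bigcup_{f\in D}\Red(f)$ under unitary conjugation, so they form a closed set, $\Irr(d)$ is open, and it inherits the manifold structure and dimension of $\mr(d)$. You make two points explicit that the paper uses only implicitly: submodules split along the vertex decomposition because they are invariant under the projections $A_v$, and conjugation by $U(d)$ preserves $\mr(d)$. Your write-up is, if anything, slightly more complete.
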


\subsection{Moduli spaces of irreducible classes of representations of \texorpdfstring{$\cOL$}{OΛ}}

\begin{center}
\textbf{From now on, $d$ is a finite dimension vector for which $\Irr(d)$ is nonempty.}
\end{center}

\subsubsection{Free proper Lie group action on the irreducible modules}
Consider the group $U(d):=\bigoplus_v U(dv)$ equal to the direct sum over $v\in\Lambda^0$ of the unitary groups of the $H_v$ (of which only finitely many are nontrivial).
Let $\PU(d):=U(d)/S_1$ be $U(d)$ modulo the circle group.
This is a compact (Lie) group that acts by conjugation on the set $\mr(d)$ of $d$-dimensional matrix modules of $\cAL$.
Since unitary conjugation preserves irreducibility, we obtain an action $\PU(d)\act \Irr(d)$.
This is a \emph{Lie group} action (i.e.~the action is smooth).
The orbit space $\Irr(d)/PU(d)$ is by definition in bijection with the irreducible classes.
We construct a manifold structure on it using a standard result from the literature.
We first prove that $\PU(d)\act \Irr(d)$ is \emph{free}.

\begin{proposition}\label{prop:action}
The action
$$\PU(d)\times M(d)\to M(d), (u,m)\mapsto (u_{s(\lambda)} m_{\lambda} u_{r(\lambda)}^*)_{\lambda}$$
of the compact (Lie) group $$\PU(d):=\Big(\bigoplus_{v\in\Lambda^0}U(dv)\Big)/S_1$$
on the matrix space $$M(d):=\bigoplus_{\lambda\in \Lambda^1} M_{ds(\lambda),dr(\lambda)}(\C)$$
restricts into a smooth, proper and free action
$$\PU(d)\times \Irr(d)\to \Irr(d)$$
on the subspace $\Irr(d)$ of irreducible $\cAL$-matrix modules of dimension $d$.
\end{proposition}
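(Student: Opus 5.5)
The plan is to check four things in turn: that the action is well defined, that it is smooth, that it is proper, and that it is free on $\Irr(d)$. Freeness is the only step with real content.

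\emph{Well-definedness and smoothness.} The formula $(u,m)\mapsto (u_{s(\lambda)} m_\lambda u_{r(\lambda)}^*)_\lambda$ defines a polynomial, hence smooth, action of $U(d)$ on $M(d)$. Scalars $z\in S_1$, embedded diagonally as $(z\,I_{dv})_v$, act trivially because $z m_\lambda \bar z = m_\lambda$. The action therefore descends to the quotient Lie group $\PU(d)$, and it stays smooth since $U(d)\to\PU(d)$ is a surjective submersion.

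\emph{Invariance of $\Irr(d)$.} The isometry condition is preserved: writing $u\cdot R$ for the image of $R$ under conjugation, we get $(u\cdot R)_v^*(u\cdot R)_v = u_v R_v^* (\oplus_\lambda u_{s(\lambda)}^*u_{s(\lambda)}) R_v u_v^* = I_{dv}$. Irreducibility is also preserved, since $u$ maps invariant subspaces of the form $\oplus_v K_v$ to invariant subspaces of the same form.

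\emph{Properness.} This is immediate because $\PU(d)$ is compact. Every continuous action of a compact group on a Hausdorff space is proper: the map $(g,x)\mapsto(gx,x)$ is closed, with compact fibres.

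\emph{Freeness.} This is the main step, and I would prove it with a Schur-type lemma for the non-self-adjoint algebra $\cAL$. Suppose $u\in U(d)$ fixes an irreducible $A$, i.e.\ $u_{s(\lambda)}A_\lambda = A_\lambda u_{r(\lambda)}$ for all $\lambda\in\Lambda^1$. Then $u=\oplus_v u_v$ is an intertwiner $H\to H$ in $\Mod(\cAL)$. Since $H$ is finite dimensional and $u\neq 0$, $u$ has an eigenvalue $z$, and by unitarity $|z|=1$. The operator $u - z\,\id$ also intertwines the $A_\lambda$, and it preserves each $H_v$. So $\ker(u-z\,\id) = \oplus_v \ker(u_v - z)$ is a nonzero subspace closed under every $A_\lambda$: if $u\xi = z\xi$, then $uA_\lambda\xi = A_\lambda u\xi = zA_\lambda\xi$. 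By irreducibility this kernel is all of $H$, so $u = z\,\id\in S_1$, and the class of $u$ in $\PU(d)$ is trivial.

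The only subtlety in this argument is that we cannot use the adjoints of the $A_\lambda$. That is why I take an eigenspace of $u$ itself, which is invariant under the $A_\lambda$ alone, rather than using any orthogonal-complement argument.
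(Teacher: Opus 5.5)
Your proof is correct, and for smoothness and properness it matches the paper (smooth formulas, compactness of $\PU(d)$). You also check two points the paper takes for granted: that the scalar circle acts trivially, so the action descends to $\PU(d)$, and that conjugation preserves $\mr(d)$ and irreducibility.

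The genuine difference is in freeness. The paper argues in two stages:
\begin{enumerate}
\item It fixes a vertex $v$ and assumes $u_v$ has two distinct eigenvalues. It shows that the submodule generated by one eigenspace $H_{v,z}$ of $u_v$ is orthogonal to another eigenspace $H_{v,x}$. This uses that paths with $s(\lambda)\neq v$ land outside $H_v$, while paths from $v$ to $v$ commute with $u_v$. Hence each $u_v=z_v\,\id$.
\item It then shows the scalars $z_v$ all agree. This uses $z_{s(\lambda)}\overline{z_{r(\lambda)}}=1$ whenever $A_\lambda\neq 0$, together with a connectivity claim: irreducibility yields, for any two vertices with nonzero fibres, a path between them with $A_\lambda\neq 0$.
\end{enumerate}

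Your argument instead treats $u=\oplus_v u_v$ as a single operator on $H$ that commutes with every $A_\lambda$ and every vertex projection. You then take a global eigenspace $\ker(u-z\,\id)=\oplus_v\ker(u_v-z)$, which is directly a nonzero submodule, so irreducibility gives $u=z\,\id$ in one step. This is Schur's lemma in a form that needs no adjoints.

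Your version subsumes both of the paper's stages and makes the connectivity step unnecessary. Indeed, the submodule the paper generates from $H_{v,z}$ is contained in your global eigenspace. The paper's vertex-by-vertex route gives a little more local information: how a stabilising unitary is constrained at each vertex and along each nonzero edge. For the statement at hand, though, your route is shorter and cleaner.

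One trivial correction: the eigenvalue exists because $H\neq\{0\}$ is a finite-dimensional complex space, not because $u\neq 0$.
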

\begin{proof}
Since matrix multiplication and involution are smooth, the action is smooth.
In particular, it is continuous. Since $\PU(d)$ is compact, we deduce that the action is proper.
It remains to prove freeness.

Consider $(A_\lambda)_\lambda \in \Irr(d)$ and $u\in U(d)$ such that $A_\lambda = u_{s(\lambda)} A_\lambda u_{r(\lambda)}^*$ for all $\lambda \in \Lambda$.
Suppose that $v \in \Lambda^0$ satisfies $u_v \not\in \C 1_{H_v}$. Since $u_v$ is a unitary matrix we can choose a basis for $H_v$ with
respect to which $u_v$ is diagonal with at least two distinct eigenvalues.
For each $z$ in the spectrum $\operatorname{spec}(u_v)$ of $u_v$, let $H_{v, z}$ be the corresponding eigenspace of $u_v$. So
$H_v = \bigoplus_{z \in \operatorname{spec}}(u_v) H_{v,z}$.
Fix $z \in \operatorname{spec}(u_v)$. We claim that the submodule $K$ of $H$ generated by $H_{v,z}$ is a proper submodule of $H$.

The module $K$ is nontrivial since it contains $H_{v,z}$. Fix $x \in \operatorname{spec}(u_v)\setminus \{z\}$.
We claim that $H_{v,x}$ is orthogonal to $K$. For this, we prove that if $\eta\in H_{v,z}$ and $\lambda\in \Lambda$,
then $A_\lambda \eta \in H_{v,x}^\perp.$
Indeed, fix $\lambda \in \Lambda$ and $\zeta\in H_{v,x}$.
If $r(\lambda)\neq v$, then by definition of the action $A_\lambda\eta=0$.
If $s(\lambda)\neq v$, then by definition of the action $A_\lambda \eta\in H_{s(\lambda)} \subset H_v^\perp \subset H_{v,x}^\perp$.
So suppose that $v=r(\lambda)=s(\lambda).$ By assumption $u_v A_\lambda u_v^* = A_\lambda$. Thus $A_\lambda(H_{v,z})\subset H_{v,z}$.
We have proven that $K\subset H$ was a proper nonzero submodule, a contradiction since $H$ is irreducible.

Hence, for each $v\in \Lambda^0$ there exists $z_v\in S_1 \subset \C$ such that $u_v=z_v \id_{H_v}$.
We show that $z_v = z_w$ for all $v,w$. For this, fix $\lambda \in \Lambda$ with $A_\lambda\neq 0$.
Then
$$
    A_\lambda = u_{s(\lambda)} A_\lambda u_{r(\lambda)}^* = z_{s(\lambda)} \cdot \overline{z_{r(\lambda)}} A_\lambda.
$$
Hence, $z_v=z_w$ whenever there exists $\lambda \in w\Lambda v$ such that $A_\lambda\neq 0$.
Since $H$ is irreducible, if $H_v\neq \{0\}\neq H_w$, then there is a path $\lambda\in\Lambda$ satisfying $A_\lambda\neq 0$ and $\{r(\lambda),s(\lambda)\}=\{v,w\}$.
Hence, $z_v=z$ is constant on $\{v\in\Lambda^0:\ H_v\neq \{0\}\}$ and $u = z I(d)$.
\end{proof}

\subsubsection{Manifold structure on the irreducible classes of modules}

Using Proposition \ref{prop:action} and a standard result on orbit spaces (see \cite[Theorem 21.10]{Lee12}) we deduce that the orbit space of $\PU(d)\act \Irr(d)$ is a topological manifold of dimension $\dim(\Irr(d))-\dim(\PU(d))$ which admits a unique (real) smooth structure such that the canonical map $\Irr(d)\onto \Irr(d)/\PU(d)$ is a smooth submersion.
From now on we equip the quotient manifold $\Irr(d)/PU(d)$ with this differential structure. We compute its dimension:
$$\dim(\Irr(d))=\dim(\mr(d))=\sum_{v\in \Lambda^0} dv(2D(v)-dv) \text{ with } D(v):=\sum_{\lambda\in v\Lambda^1}ds(\lambda)$$
and, writing $U(dv)$ stands for the unitary group of $\C^{dv}$,
$$
    \dim(\PU(d))=\dim(U(d)) - 1 = \Big(\sum_{v\in\Lambda^0} \dim(U(dv)) \Big) -1=\Big(\sum_{v\in\Lambda^0} dv^2 \Big)-1.
$$
Hence
\begin{align*}
\dim(\Irr(d)/\PU(d)) & = \dim(\Irr(d)) - \dim(PU(d))\\
& = \sum_{v\in \Lambda^0} dv(2D(v)-dv) - \left(\sum_{v\in\Lambda^0} dv^2 \right) +1\\
& = \Big(\sum_{v\in \Lambda^0} 2dv(D(v)-dv)\Big) +1.
\end{align*}

As always, recall that $D(v):=\sum_{\lambda\in v\Lambda^1}ds(\lambda)$.

\begin{proposition}
The set of irreducible classes of $d$-dimensional $\cAL$-modules is either empty or admits a bijection with the smooth manifold $\Irr(d)/\PU(d)$ of dimension
$$
    1+\sum_{v\in \Lambda^0} 2dv(D(v)-dv).
$$
\end{proposition}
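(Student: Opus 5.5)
The statement packages three facts that are already essentially in place: isomorphism classes correspond to orbits, the orbit space is a smooth manifold, and its dimension is the number just computed. The plan is to verify each in turn.

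\textbf{Step 1: classes are orbits.} Every $d$-dimensional $\cAL$-module is unitarily isomorphic to a matrix module in $\mr(d)$: choose an orthonormal basis of each $H_v$, identify $H_v\cong\C^{dv}$, and transport the operators $A_\lambda$. Irreducibility is preserved under this transport. Two matrix modules in $\Irr(d)$ are unitarily equivalent exactly when they are conjugate by some $u\in U(d)$ through $u_{s(\lambda)}A_\lambda u_{r(\lambda)}^*$, since a unitary intertwiner must respect the vertex decomposition $H=\oplus_v H_v$. The central circle $S_1$ acts trivially, so these classes are exactly the $\PU(d)$-orbits in $\Irr(d)$.

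If one wants isomorphism in $\Mod(\cAL)$ (arbitrary bounded invertible intertwiners) rather than unitary equivalence, there is an extra check. Let $\theta$ be an invertible intertwiner of irreducible modules. Then $\theta^*\theta$ commutes with the $A_\lambda$, because $\theta$ is invertible and the isometry relations are preserved. Schur's lemma, in the form of the argument in the proof of Proposition~\ref{prop:action}, then forces $\theta^*\theta$ to be scalar. I would flag this as the only potential subtlety, but the statement only asks for unitary classes, so I would not insist on it.

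\textbf{Step 2: smooth structure.} Proposition~\ref{prop:action} shows that $\PU(d)$ acts smoothly, properly and freely on the manifold $\Irr(d)$. Proposition~\ref{prop:irrep-manifold} gives that $\Irr(d)$ is a smooth manifold, being open in $\mr(d)$. The quotient manifold theorem \cite[Theorem 21.10]{Lee12} then says that $\Irr(d)/\PU(d)$ is a smooth manifold of dimension $\dim\Irr(d)-\dim\PU(d)$.

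\textbf{Step 3: dimension count.} Proposition~\ref{prop:irrep-manifold} gives $\dim\Irr(d)=\sum_v dv(2D(v)-dv)$. We also have $\dim\PU(d)=\sum_v dv^2-1$. Subtracting gives
\[
1+\sum_v 2dv\bigl(D(v)-dv\bigr).
\]

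\textbf{Empty case.} If $\Irr(d)=\emptyset$, then there are no irreducible classes and the first alternative of the statement holds.
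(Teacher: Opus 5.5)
Your Steps~2 and~3 are exactly the paper's argument (the free proper action of Proposition~\ref{prop:action}, the quotient manifold theorem, and the count $\dim\Irr(d)-\dim\PU(d)$), and Step~1 correctly spells out the identification of unitary classes with $\PU(d)$-orbits, which the paper treats as holding by definition. One caution on your optional aside, whose justification does not work: invertibility and the isometry relations do not make $\theta^*\theta$ commute with the $A_\lambda$. This fails for non-full modules, e.g.\ for $\cA_2$ on $\C^3$ with $A_ie_j=e_j/\sqrt2$ for $i,j\in\{1,2\}$, $A_1e_0=(e_0+e_1)/2$, $A_2e_0=(e_0-e_1)/2$, and $\theta=1+e_1e_2^*$. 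What those relations actually give is the identity $\sum_{\lambda\in v\Lambda^1}A_\lambda^*\,\theta^*\theta\,A_\lambda=A_v\,\theta^*\theta\,A_v$. Irreducibility then forces $\theta^*\theta$ to be scalar, because the eigenspace of its largest eigenvalue is a submodule.
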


\subsubsection{Moduli spaces of representations of \texorpdfstring{$\cOL$}{O_Lambda}}

We may now easily deduce our main result: Theorem \ref{theo:B}.
Indeed, let $\Irr(d)$ be the set of irreducible matrix modules of $\cAL$ with dimension vector $d$, and let $X(d)$ be the set of irreducible representations of $\cOL$ with Pythagorean dimension vector $d$.
By Proposition~\ref{prop:decomposition}, the image $\Pi(\Irr(d))$ of $\Irr(d)$ in $\Rep(\cOL)$ is contained in the set of irreducible representations of $\cOL$. Since irreducible modules are full, we have $\Pdim(\Pi(\pi)) = d$ for all $\pi \in \Irr(d)$, and hence $\Pi(\Irr(d)) \subset X(d)$. Consider unitary-equivalence map $q_{\text{u.e.}} : X(d)\onto \Spec(d)$. Then we obtain a map $q_{\text{u.e.}} \circ \Pi : \Irr(d)\to \Spec(d)$.
By Proposition \ref{prop:decomposition}(5)~and~(6), this map factorises through a map $\Irr(d)/\PU(d)\to \Spec(d)$, which is bijective by definition of Pythagorean dimension.

\section{Explicit examples}\label{sec:example}

\subsection{Cuntz-Dixmier \texorpdfstring{C$^*$}{C*}-algebras}
Take the $1$-graph with one vertex $v$ and $n$ loops $1, \dots, n$.  The algebra $\cOL=\cO_n$ is then the universal C$^*$-algebra generated by $n$ isometries having orthogonal codomains. For $n=2$ this was first studied by Dixmier, who proved that this C$^*$-algebra admits a quotient that is separable simple and purely infinite \cite{Dixmier64}.
Subsequently, Cuntz performed a deep analysis of the C$^*$-algebras $\cO_n$ and proved among other things that they are all simple, so that Dixmier's quotient map is in fact the identity, and $\cO_n$ itself is separable, simple and purely infinite \cite{Cuntz77}.

The non-self-adjoint algebra $\cAL=\cA_n$ is generated by operators $a_1,\cdots,a_n$ such that $\oplus_i a_i$ is an isometry from $H$ to $H^n$.
Any $n$-tuple of complex numbers $(z_1,\cdots,z_n)$ satisfying $\sum_i |z_i|^2=1$---that is, any element of the unit sphere in $\C^n$---defines an irreducible $\cA_n$-module $H_z = \C$,
the action of the generators being given by $a_i \cdot w = z_i w e_i \in \C^n$. Applying our functor $\Pi$ yields an irreducible representation $\pi_z : \cO_n \to B(L)$
The $\pi_z$ are pairwise inequivalent and constitute a set of representatives for $\Spec(1)$: the piece of the spectrum of $\cO_n$ corresponding to Pythagorean dimension one.
Hence $\Spec(1)$ is in natural bijection with the real unit sphere of dimension $2n-1$.

Now, for any $n\geq 2$ and any $d\geq 1$ we can find a $d$-dimensional $n$-tuples of matrices $\{A_i\}_i$ such that $\oplus_i A_i$ is isometric and
the $A_i$'s have no nonzero proper invariant subspace of $\C^d$. Hence $\Irr(d)\subset \mr(d)$ is nonempty and forms a submanifold of dimension $d(2D-d) = (2n-1)d^2.$
Thus $\Spec(d)$ is (in bijection with) a manifold of dimension
$$
    2d(D-d)+1=2d(nd-d)+1=2(n-1)d^2+1
$$
as proved in \cite[Theorems E.3 and F.2]{Brothier-Wijesena24}.

\subsection{Cuntz--Krieger \texorpdfstring{C$^*$}{C*}-algebras}\label{sec:example-CK}
Given a finite $n$ by $n$ matrix $A$ with entries in $\{0,1\}$, there is a
corresponding directed graph $\Lambda_A$, with $\Lambda_A^0 = \{1, \dots, n\}$,
and $i \Lambda_A^1 j = a_{ij}$ for all $i,j \in \Lambda_A^0$. The matrix $A$ is called
the \emph{adjacency matrix} of $\Lambda_A$. Cuntz and Krieger
defined the associated C$^*$-algebras $\cOL=\cO_A$ \cite{Cuntz-Krieger80}.
Kumjian, Pask, Raeburn and Renault extended this construction (and many
structural results on them) to row-finite directed graphs in
\cite{Kumjian-Pask-Raeburn-Renault97, Kumjian-Pask-Raeburn98} (see also
Raeburn's book \cite{Raeburn05}).

\subsubsection{Complete graph} \label{subsec:complete-graph}

Consider the complete directed graph $\Lambda$ with $2$ vertices: So $\Lambda^0 = \{1, 2\}$ and $\Lambda^1 = \{a_{ij} : i,j \in \{1,2\}$,
with $r(a_{ij}) = i$ and $s(a_{ij}) = j$.
\[
    \begin{tikzpicture}[decoration={markings, mark=at position 0.5 with {\arrow{stealth}}}]
    \node (1) at (0,0) {1};
    \node (2) at (2,0) {2};
    \draw[postaction={decorate}] (1) .. controls +(-1, 1) and +(-1,-1) .. (1) node[pos=0.5, anchor=east] {$a_{11}$};
    \draw[postaction={decorate}] (2) .. controls +(1, 1) and +(1,-1) .. (2) node[pos=0.5, anchor=west] {$a_{22}$};
    \draw[postaction={decorate}, bend left] (1) to node[pos=0.5, anchor=south] {$a_{21}$} (2);
    \draw[postaction={decorate}, bend left] (2) to node[pos=0.5, anchor=north] {$a_{12}$} (1);
    \end{tikzpicture}
\]

The adjacency matrix of $\Lambda$ is
$$\begin{pmatrix} 1 & 1 \\ 1 &1 \end{pmatrix}.$$
An $\cAL$-module $H$ consists of a decomposition $H = H_1\oplus H_2$, and operators $A_{ij}:H_i\to H_j$ such that,
writing $P_i : H \to H_i$ for the orthogonal projections,
$$
    A_{11}^*A_{11}+A_{12}^*A_{12}=P_1 \text{ and } A_{21}^*A_{21}+A_{22}^*A_{22}=P_2.
$$
Fix $q \ge 1$ and let $d = (q, q) \in \N^2$. We claim that
$\Irr(d)$ is nonempty. This implies that $\Spec(d)$ is nonempty and
forms a smooth manifold of dimension $1+2q^2$.

To see that $\Irr(d)$ is nonempty, fix orthonormal bases $(f_1,\cdots,f_q)$ and
$(g_1,\cdots,g_q)$ for $H_1$ and $H_2$, respectively.
Let $S_1$ denote the unit complex circle, fix $\omega, \epsilon \in \T^q$, each with
pairwise distinct entries, and define
\[
A_{11} f_i = \dfrac{\omega_i}{\sqrt2} f_i\quad\text{ and }A_{22} g_j = \dfrac{\epsilon_j}{\sqrt2} g_j.
\quad\text{ for $i,j \le q$}.
\]
Define $A_{12} : H_1 \to H_2$ and $A_{21} : H_2 \to H_1$ by
$$
    A_{12}f_i:= \dfrac{1}{\sqrt{2q}}\Big(\sum_{r=1}^q g_r - 2 g_i\Big)
    \quad\text{ and }\quad
    A_{21}g_j := \dfrac{f_j}{\sqrt 2},
$$
so
\[
    (A_{12} f_i)_j = \frac{(-1)^{\delta_{i,j}}}{\sqrt{2q}} \quad\text{ and }\quad (A_{21} g_j)_i = \frac{\delta_{i,j}}{\sqrt{2}}
\]
This makes $H$ into an $\cAL$-module with dimension vector $d$.

Suppose that $K\subset H$ is a nontrivial submodule. then $K=K_1\oplus K_2$
with $K_i\subset H_i,i=1,2$. Since $K$ is closed under $A_{11}$, which has
distinct eigenvalues corresponding to the canonical basis, we must have
$K_1 = \operatorname{span}\{f_i : i \in I\}$ for some $I \subset \{1, \dots, q\}$.
Likewise, $K_2 = \operatorname{span}\{g_j : j \in J\}$ for some $J \subset \{1, \dots, q\}$.
If $K_1 = 0$ then $A_{21}(K_2) = 0$, and since $A_{12} + A_{22}$ is an isometry on $K_2$,
this implies that $A_{22}$ is an isometry on $K_2$, forces $K_2 = 0$  because
$A_{22}^* A_{22} = \frac{1}{2} 1_{H_2}$. Since we have assumed that $K$ is nontrivial,
we deduce that $K_1$ is nonzero. So $I \neq 0$; fix $i \in I$.
Then $\langle A_{12}(f_i), g_j\rangle = 1 - 2\delta_{i,j} \neq 0$ for all $j$.
Since $K_2 = \operatorname{span}\{g_j : j \in J\}$, we obtain $J = \{1, \dots, q\}$,
and so $K_2 = H_2$. Now $A_{21}(H_2)=H_1$. So $K_1=H_1$ and thus
$K=H$. Hence $H$ is irreducible and therefore $\Irr(d)\neq
\emptyset.$

\subsubsection{A graph admitting few irreducible representations with finite Pythagorean dimension}

Consider the graph $\Lambda$ with vertex set $\{1,2,3\}$ and edge set $\{(11), (21),(31)\}$ with $(ij)$ being an edge from $j$ to $i$.
\[
    \begin{tikzpicture}[decoration={markings, mark=at position 0.5 with {\arrow{stealth}}}]
    \node (1) at (2,0) {1};
    \node (2) at (0.5,0) {2};
    \node (3) at (3.5,0) {3};
    \draw[postaction={decorate}] (1) .. controls +(1, 1) and +(-1,1) .. (1);
    \draw[postaction={decorate}] (1)--(2);
    \draw[postaction={decorate}] (1)--(3);
    \end{tikzpicture}
\]
An $\cAL$-module $H$ decomposes as $H=H_1\oplus H_2\oplus H_3$ and is described by 3 operators $A_{i1}:H_i\to H_1$ for $1\leq i\leq 3$.
Assume that $H$ is finite dimensional and irreducible.
Then $H_2\oplus H_3$ clearly contains no nonzero submodule, so is residual.
Since $H$ is irreducible, this forces $H=H_1$.
Now $A_{11} : H_1 \to H_1$ is an isometry, and hence a unitary.
Since $H$ is irreducible, $A_{11}$ has no invariant subspace, and this forces $\dim(H_1)\leq 1$ and $A_{11}=z$ for some $z \in S_1$.
This defines a module $M_{z}=(H,A_{i1})$ of P-dimension $(1,0,0)$, with $H=\C$, $A_{11}=z$ and $A_{21}=A_{31}=0$.
The $M_z$ are all irreducible (since they are of dimension one) and pairwise inequivalent.
Moreover, every finite P-dimensional irreducible module has this form up to conjugacy.
Using our main theorem yields that the spectrum of $\cOL$ partitions into
$$\Spec(1,0,0)\sqcup \Spec(\infty).$$
The first piece is in bijection with $S_1$ and corresponds to the irreducible classes of representations of $\cOL$
of finite Pythagorean dimension; the second piece corresponds to the classes with infinite Pythagorean dimensions.

For this specific example, we can say more about $\Spec(\infty)$, though our general theorems are
silent on its structure. Consider an irreducible representation $L$ of $\cOL$ of infinite P-dimension.
By essential surjectivity of $\Pi$ there exists a module $H$ so that $\Pi(H)\simeq L$.
Note that $H_1\subset H$ is a submodule. Irreducibility implies that $H_1$ is either trivial or complete.
If $H_1=\{0\}$ then $H_2=H_3=\{0\}$ since $a_{j1} : H_j \to H_1$ is an injection for each $j$.
Hence, $H_1$ is complete and thus $\Pi(H_1)=\Pi(H)$. So we may assume that $H=H_1$.
The Wold decomposition applied to $A_{11}:H_1\to H_1$ implies that $A_{11}$ is conjugate to a direct sum
of copies of the unilateral shift operator $S:\delta_n\mapsto \delta_{n+1}$ of $\ell^2(\N)$, and a unitary.
Furthermore, every invariant subspace of $A_{11}$ yields a submodule of $H$.
Irreducibility of $\Pi(H)$ implies that $A_{11}$ either is conjugate to $S$ or is a unitary.
The latter case reduces to the situation of the preceding paragraph, so $H$ must be 1-dimensional contradicting
that $L$ has infinite P-dimension.

Consequently, $A_{11}$ is conjugate to $S$. For each $N$, write $(11)^N$ for the path in $\Lambda^N$ obtained as the $N$-fold concatenation of the loop $(11)$. Then $\Pi(H)$ is the closed linear span of the vectors $[(11)^N,\delta_n]$.
The formula $[(11)^N,\delta_n]\mapsto \varepsilon_{n-N}$ provides a unitary transformation $\varphi:\Pi(H)\to \ell^2(\Z)$
where $\varphi_k$ denotes the standard basis of $\ell^2(\Z)$.
Moreover, this transformation conjugates the operator $X_{11}$ of $\cOL$ with the shift operator of $\ell^2(\Z)$ which is a unitary.
Any invariant subspace of $X_{11}$ defines a subrepresentation of $\Pi(H)$. There are many nontrivial invariant subspace for the unitary shift operator of $\ell^2(\Z)$.
Hence, $\Pi(H)$ is reducible. So $\cOL$ admits no irreducible representation of infinite P-dimension.
We conclude that the whole spectrum of $\cOL$ is equal to $\Spec(1,0,0)$ and is in bijection with the circle $S_1$.

This is as expected: by direct computation, there is an isomorphism $\cOL \cong M_3(C(S_1))$
that carries $x_{(j1)}$ to the (constant) matrix unit $\theta_{j1}$ for each of $j = 2,3$, and carries $x_{(11)}$ to the matrix
$\operatorname{diag}(z\mapsto(z, 0, 0))$. So the inclusion of $C^*(\{x_{(11)}\})$ in $\cOL$ is a Morita equivalence from
$C(S_1)$ to $\cOL$, and therefore induces a homeomorphism of spectra from $S_1$ to the spectrum of $\cOL$.

\subsubsection{Maps between moduli spaces of isomorphic C$^*$-algebras}

We investigate the maps between our moduli spaces arising from known isomorphisms
between Cuntz--Krieger algebras of different graphs.

\textbf{An isomorphism between Cuntz--Krieger C$^*$-algebras.}

Consider again the complete graph $\Lambda$ of Section \ref{subsec:complete-graph} with vertex set $\{1,2\}$ and edge set $\{(ji): 1\leq i,j\leq 2\}$.
For short, the concatenation of edges is denoted as $kji := (kj)(ji)$. More generally, we consider binary words $w = (w_n\dots w_1) \in \Lambda$ to denote the concatenation of $n-1$ edges $(w_nw_{n-1}) \dots (w_3w_2)(w_2w_1)$. Additionally, consider the rose with two petals $\Lambda_2$ which has a single vertex $u$ and two loops $e_1, e_2$.

It is well-known that $\cOL \cong \cO_2 = \cO_{\Lambda_2}$. To see this, denote the canonical generators of $\cOL$ as in Definition~\ref{def:O} by $\{y_\alpha : \alpha \in \Lambda\}$.
Denote the generators of $\cO_2 = \cO_{\Lambda_2}$ by $\{x_\beta : \beta \in \Lambda^2\} = \{x_\beta : \beta \in \{0,1\}^*\}$. For $i = 1,2$, let $p_i := x_ix_i^*$.
Direct computation shows that the elements
\[
X_i := y_{i1} + y_{i2} \in \cOL,\quad i = 1,2
\]
determine a representation $\cO_2$, and that the elements
\[
Y_{ij} := x_ip_j \in \cO_2,\quad 1 \le i,j \le 2
\]
determine a representation of $\cOL$. To see why, consider a representation $\cOL \act L$.
Then $L$ decomposes as $L_1 \oplus L_2$, and more generally $L_w$ decomposes as $L_{w1} \oplus L_{w2}$ for a binary word $w$. So
$y_w$ acts as a partial isometry from $L_{sw}$ to $L_{w}$. Consequently, $y_{i1}+y_{i2}$ acts as an \textit{isometry} from $L$ to $L_i$.
Thus, this defines an action of $\cO_2$ on $L$ as claimed. Since $x_j^*$ acts as a partial isometry from $L_j$ to $L$, each $x_ix_jx_j^* = x_ip_j$
acts as a partial isometry from $L_j$ to $L_{ij}$, so that the $Y_{ij}$ determine a representation of $\cOL$.

By the universal properties of $\cOL$ and $\cO_2$ there are homomorphisms $\theta : \cO_2 \to \cOL$ and $\tau : \cOL \to \cO_2$ such
that $\theta(x_i) = X_i$ and $\tau(y_{ij}) = Y_{ij}$. We have
\[
\theta(\tau(y_{ij}))
    = \theta(x_i x_j x^*_j)
    = (y_{i1} + y_{i2})(y_{j1} + y_{j2})(y_{j1} + y_{j2})^*
    = (y_{i1} + y_{i2}) y_j
    = y_{ij}
\]
and
\[
\tau(\theta(x_i)) = \tau(y_{i1} + y_{i2}) = x_i (x_1 x^*_1 + x_2 x^*_2) = x_i.
\]
So $\theta$ and $\tau$ are mutually inverse, and therefore isomorphisms. Henceforth we just write $\theta^{-1}$ for $\tau$.

\textbf{Equivalent representations from isomorphic Cuntz--Krieger algebras.}
The isomorphism $\theta : \cO_2 \to \cOL$ induces a bijection $(\sigma, L) \mapsto (\sigma \circ \theta, L)$ from representations of $\cOL$ to representations of $\cO_2$ that preserves irreducibility and equivalence classes of representations.
Given that the Pythagorean functor $\Pi: \Mod(\cAL) \rightarrow \Rep(\cOL)$ is essentially surjective, it is natural to ask: given an $\cAL$-module $\pi$, how do we construct an $\cA_2$-module $\tau$ such that $\Pi(\tau)$ is equivalent to $\Pi(\pi) \circ \theta$, and vice versa.
For this, first, note that $\theta$ maps the non-self-adjoint subalgebra $\cA_{\Lambda_2}$ of $\cO_2$ generated by $x_1^*$ and $x_2^*$ into the corresponding non-self-adjoint subalgebra $\cAL$ of $\cOL$.
However, $\theta^{-1}$ does not map $\cAL$ into $\cA_2$. So we start with the easier question of how representations of $\cOL$ map to representations of $\cO_2$.

Fix an $\cAL$-module $\pi: \cAL \act H$ with finite dimension vector $d$. Let $\sigma : \cOL \act L$ be the corresponding representation of $\cOL$. Let $\tau := \sigma \circ \theta : \cO_2 \act L$.
We construct the $\cA_2$-module whose corresponding representation of $\cO_2$ is equivalent to $\tau$.
Write $b_{ij}$ for the generators of $\cAL$ and $a_k$ for the generators of $\cA_2$ for $1 \leq i,j,k \leq 2$.
Recall that $H$ decomposes over $\Lambda^0 = \{1,2\}$ as $H = H_1 \oplus H_2$, and the operators $B_{ij} := \pi(b_{ij})$ determine bounded linear operator from $H_i$ to $H_j$.
For $(\xi,\eta) \in H_1 \oplus H_2 = H$, define $A_1\xi, A_2\xi \in H = H_1 \oplus H_2$ by
\begin{equation}\label{eq:A2-mod from A-mod}
    A_1(\xi, \eta) = (B_{11}(\xi), B_{12}(\xi)) \textrm{ and } A_2(\xi, \eta) = (B_{21}(\eta), B_{22}(\eta)).
\end{equation}

Identifying elements of $B(H) = B(H_1 \oplus H_2)$ with block matrices $(T_{ij})_{i,j = 1,2}$ such that each $T_{ij} \in B(H_j, H_i)$, the operators $A_1$ and $A_2$ have block decompositions
\[A_1 = \begin{pmatrix}
	B_{11} & 0 \\ B_{12} & 0
\end{pmatrix},\
A_2 = \begin{pmatrix}
	0 & B_{21} \\ 0 & B_{22}
\end{pmatrix}.
\]
By construction, $\pi' = \pi \circ \theta$, so $\pi'$ is an $\cA_2$-module, determining a representation $\sigma' : \cO_2 \act L'$.

We construct a map between $L$ and $L'$ that intertwines the two actions.
Recall that a typical spanning element of $L'$ has the form $[\lambda, (\xi, \eta)]$ for some $\lambda \in \Lambda_2$ and $(\xi, \eta) \in H_1 \oplus H_2$.
We claim that there is an invertible bounded linear operator $\alpha: L' \rightarrow L$ such that
\[[\lambda, (\xi, \eta)] \mapsto [\lambda 1, \xi] + [\lambda 2, \eta],\]
and that $\alpha^{-1} : L' \rightarrow L$ satisfies
\[[\lambda 1, \xi] \mapsto [\lambda, (\xi, 0)] \textrm{ and } [\lambda 2, \eta] \mapsto [\lambda, (0, \eta)].\]
To see this define a sesquilinear form on
the space $L^{00}$ of Section~\ref{sec:first construction} by
\[
[(\lambda, (\xi,\eta)), (\mu, (\zeta,\tau))] = \langle [\lambda 1, \xi] + [\lambda 2, \eta], [\mu 1, \zeta] + [\mu 2, \tau]\rangle_{L'}.
\]
For $\lambda \in \Lambda_2$ and $i \in \{1,2\}$ and $(\xi_1, \xi_2) \in H$, we have
\begin{align*}
[(\lambda, (\xi_1,\xi_2)), (\lambda i, A_i(\xi_1,\xi_2))]
    &= [(\lambda, (\xi,\eta)), (\lambda i, (B{i1}\xi_i, B_{i2}\xi_i))]\\
    &= \sum_{j,k = 1}^2 \langle [\lambda j, \xi_j], [\lambda i k, B_{ik}\xi_i]\rangle\\
    &= \sum_{k = 1}^2 \langle [\lambda i, \xi_i], [\lambda i k, B_{ik}\xi_i]\rangle\\
    &= \langle [\lambda i, \xi_i], [\lambda i, \xi_i]\rangle.
\end{align*}
Applying this four times shows that
\[
\Big[(\lambda, (\xi_1,\xi_2)) - \sum_i (\lambda i, A_i(\xi_1,\xi_2)), (\lambda, (\xi_1,\xi_2)) - \sum_i (\lambda i, A_i(\xi_1,\xi_2))\Big] = 0
\]
This and that the vector-space operations in $L'$ are in the second coordinate show that the kernel $N$ of the quotient map $L^{00} \to L^0$ is contained in the kernel of $[\cdot,\cdot]$, and so $[\cdot, \cdot]$ descends to a sesquilinear form $\llangle \cdot, \cdot\rrangle$ on $L^0$. We use the characterisation of the inner product on $L$ from Proposition~\ref{prp:L inner prod} to show that this form agrees with the inner product on $L^0$. For this, fix $\lambda,\mu \in \Lambda_2$ and $(\xi,\eta), (\zeta,\tau) \in H$.
If $r(\lambda) \not= r(\mu)$, then $r(\lambda i) \not= r(\mu j)$ for all $i,j$ and so
$\llangle [\lambda, \xi], [\mu,\eta]\rrangle = \langle [\lambda 1, \xi] + [\lambda 2, \eta], [\mu 1, \zeta] + [\mu 2, \tau]\rangle_{L'} = 0$. If $d(\lambda) = d(\mu)$, then
$d(\lambda i) = d(\mu j)$ for all $i,j$ and we have $\delta_{\lambda i, \mu i} = \delta_{\lambda,\mu} \delta_{i,j}$. So
\begin{align*}
\llangle [\lambda, \xi], [\mu,\eta]\rrangle
    &= \langle [\lambda 1, \xi] + [\lambda 2, \eta], [\mu 1, \zeta] + [\mu 2, \tau]\rangle_{L'} \\
    &= \delta_{\lambda 1, \mu 1} \langle\xi,\zeta\rangle
        + \delta_{\lambda 1, \mu 2} \langle\xi,\tau\rangle
        + \delta_{\lambda 2, \mu 1} \langle\eta,\zeta\rangle \delta_{\lambda 2, \mu 2} \langle\eta,\tau\rangle\\
    &= \delta_{\lambda,\mu} \langle(\xi,\eta), (\zeta,\tau)\rangle_H.
\end{align*}
Condition~(3) of Proposition~\ref{prp:L inner prod} is vacuous because $\Lambda$ is a $1$-graph with no sources.

Consequently the uniqueness assertion of Proposition~\ref{prp:L inner prod} shows that $\llangle \cdot, \cdot\rrangle = \langle \cdot, \cdot\rangle_{L}$. Therefore, there is an isometric linear map $\alpha : L \to L'$ as claimed. Each $\lambda \in \Lambda$ has form $\lambda = \lambda' i$ for some $\lambda' \in \Lambda_2$ and $i \in \{1,2\}$, and we have $[\lambda 1, \xi] = \alpha([\lambda, (\xi,0)])$ and $[\lambda 2, \xi] = \alpha([\lambda, (0, \xi)])$, so $\alpha$ is surjective and hence an isomorphism; the same calculation shows that $\alpha^{-1}$ satisfies the given formula.

For $\lambda, \mu \in \Lambda_2$ with $r(\lambda) = s(\mu)$ we have
\[\alpha(\sigma'(x_\mu)[\lambda, (\xi, \eta)]) = \alpha([\mu\lambda, (\xi\, \eta)]) = [\mu\lambda 1, \xi] + [\mu\lambda 2, \eta]\]
and
\[\tau(x_\mu)\alpha([\lambda, (\xi, \eta)]) = \sigma(y_{\mu1} + y_{\mu2})([\lambda 1, \xi] + [\lambda 2, \eta]) = [\mu\lambda 1, \xi] + [\mu\lambda 2, \eta].\]
Hence, $\alpha$ intertwines the operators $\sigma'(x_\mu)$ and $\tau(x_\mu)$; and since it is unitary, it also intertwines the adjoints $\sigma'(x_\mu^*)$ and
$\tau(x_\mu^*)$.
Hence $\sigma'$ is equivalent to $\tau$ as required.

Now fix an $\cA_2$-module $\pi' : \cA_2 \act H$ with finite dimension vector $d$. We construct an $\cAL$-module $\pi$ so that the resulting representations of $\cO$ are equivalent. For this, we first dilate $\pi'$ to a larger non-full $\cA_2$-module. Continue to write $a_i$ for the generators of $\cA_2$, $b_{ij}$ for the generators of $\cAL$, and $\Pi(\pi') =: \sigma': \cO_2 \act L'$. Let $\pi'_d : \cA_2 \act H \oplus H$ be the $\cA_2$-module such that
\[
    \pi'_d(a_1)(\xi, \eta) = (\pi'(a_1)\xi, \pi'(a_2)\xi) \textrm{ and } \pi'_d(a_2)(\xi, \eta) = (\pi'(a_1)\eta, \pi'(a_2)\eta)
\]
for all $(\xi, \eta) \in H \oplus H$. Then $\pi'_d$ is the submodule $H_{\vee} \subset L'$ (see Section \ref{subsec:scond-constr-Hilb} for notation) obtained by regarding $L'$ as an $\cA_2$-module. Hence, $\Pi(\pi'),\ \Pi(\pi'_d)$ are equivalent representations. Define $\pi: \cAL \act H \oplus H$ to be the $\cAL$-module such that
\[
    \pi(b_{11}) = \pi(b_{21}) := \pi'(a_1) \textrm{ and } \pi(b_{12}) = \pi(b_{22}) := \pi'(a_2)
\]
and denote the corresponding representation by $\sigma: \cOL \act L$. Observe that the $\cA_2$-module obtained from $\pi$ using the formula~\eqref{eq:A2-mod from A-mod} is precisely $\pi'_d$. Hence the representations $\sigma',\ \sigma \circ \theta$ are equivalent.

\textbf{Functor between representation categories}
We have now seen how to obtain an $\cA_2$-module from an $\cAL$-module and vice versa. This process is in fact functorial. Specifically, the formulas
\[\Psi(\sigma)(a_i)(\xi_1, \xi_2) = (\sigma(b_{i1})\xi_i, \sigma(b_{i2})\xi_i) \textrm{ for } \xi_i \in H_i,\ i = 1,2,\]
and
\[\Omega(\pi)(b_{ij}) = \rho(a_j), \textrm{ for } 1 \leq i,j \leq 2\]
determine functors
\[\Psi: \Mod(\cAL) \rightarrow \Mod(\cA_2),\ (\sigma, H_1 \oplus H_2) \mapsto (\Psi(\sigma), H_1 \oplus H_2),\ \alpha \mapsto \alpha\]
and
\[\Omega: \Mod(\cA_2) \rightarrow \Mod(\cAL),\ (\pi H) \mapsto (\Omega(\pi), H \oplus H),\ \alpha \mapsto \alpha \oplus \alpha.\]
Composing these with the functors $\Pi$ and $\Sigma$ of Proposition~\ref{prop:functor} and Corollary~\ref{cor:categories} yields
functors between the representation categories of $\cO_2$ and $\cOL$. To lighten notation, we denote these by
\[
    \Psi: \Rep_{\Pfd}(\cOL) \rightarrow \Rep_{\Pfd}(\cO_2),\quad\text{ and }\quad \Omega: \Rep_{\Pfd}(\cO_2) \rightarrow \Rep_{\Pfd}(\cOL).
\]
We have
\[
    \Psi(\sigma) \cong \sigma \circ \theta\quad\text{ and }\quad \Omega(\pi) \circ \theta \cong \pi.
\]
We have $\sigma \cong \Omega(\Psi(\sigma))$ and $\pi \cong \Psi(\Omega(\pi))$ and these isomorphisms are natural, so they determine an equivalence of categories.

\textbf{Mapping between moduli spaces.}
The results of Section~\ref{sec:moduli spaces} provide nontrivial moduli spaces of $\cO_2,\cOL$ for each finite dimension (vector).
Thus, it is of interest how $\Psi$ and $\Phi$ behave as maps between the moduli spaces of $\cOL$ and $\cO_2$.
We have constructed the Pythagorean dimension of a representation of $\cOL$ and the associated piece of spectrum which depends on the choice of the graph $\Lambda$.
To emphasise this dependency we add in this paragraph a superscript $\Lambda$ to avoid confusion.
Recall that $\Lambda_2,\Lambda$ denote the graph with one vertex and two loops and the complete graph with 2 vertices, respectively.
Since $\dim(\Psi(\sigma)) = \dim(\sigma)$ for every $\cAL$-module $\sigma$, and since $\dim(\Omega(\pi)) = 2\cdot\dim(\pi)$ for every $\cA_2$-module $\pi$, we see that
$$
    \Pdim^{\Lambda_2}(\Psi(\rho)) \leq \Pdim^\Lambda(\rho) \leq 2\cdot\Pdim^{\Lambda_2}(\Psi(\rho))
$$
for all representations $\rho \in \ob \Rep(\cOL)$.
We now examine some examples that show that $\Psi$ and $\Omega$ need not preserve Pythagorean dimension on the nose. Throughout the following we identify all finite-dimensional Hilbert spaces $H$ with the concrete spaces $\C^{\dim(H)}$ by fixing a choice of orthonormal basis for each.

\textit{Mapping of $\Spec^{\Lambda_2}(1)$ and $\Spec^{\Lambda}(1,1)$.}
Take $a_1,a_2 \in \C$ satisfying $\vert a_1\vert^2 + \vert a_2\vert^2 = 1$. This gives a one-dimensional $\cA_2$-module $\pi = (a_1,a_2, \C)$.
The space of such modules is in bijection with the real 3-sphere $S_3$ and with $\Spec^{\Lambda_2}(1)$ (the space of irreducible classes of representations of $\cO_2$ with Pythagorean dimension $1$ with respect to the graph $\Lambda_2$).
The associated $\cAL$-module
$$
    \sigma:=\Omega(\pi)=(B_{11},B_{12},B_{21},B_{22},\C\oplus\C)
$$
is by definition 2-dimensional and satisfies
$$
    B_{11}(\xi_1,\xi_2)=(a_1\xi_1,0), B_{12}(\xi_1,\xi_2)=(0,a_2\xi_1), B_{21}(\xi_1,\xi_2) = (a_1\xi_2,0), B_{22}(\xi_1,\xi_2)=(0,a_2\xi_2).
$$
If $a_1\neq 0\neq a_2$ then $\sigma$ is an irreducible $\cAL$-module and thus the associated representation $\Pi(\sigma)$ of $\cOL$ is irreducible with P-dimension $2$ and P-dimension vector $(1,1)$ with respect to the graph $\Lambda$.
We deduce that $\Pi(\sigma)$ has P-dimension $2$ with respect to the complete graph $\Lambda$ while it has P-dimension $1$ with respect to bouquet of two roses $\Lambda_2$.

If $a_2 = 0$ then $\sigma$ is reducible with complete submodule $\C\oplus\{0\}$. In this case we have $\Pdim^{\Lambda}(\Pi(\sigma)) = 1$ and its P-dimension vector is $(1,0)$.
Similarly, if $a_1=0$ then $\Pdim^{\Lambda}(\Pi(\sigma))=1$ and its P-dimension vector is $(0,1)$.
We obtain a bijection
$$
    \Spec^{\Lambda_2}(1)\to X\sqcup \Spec^\Lambda(1,0)\sqcup \Spec^\Lambda(0,1);
$$
on the right, $\Spec^\Lambda(1,0)$ and $\Spec^\Lambda(0,1)$ are the great circles $z_i=0,i=1,2$, and $X$ is the complement $\{(z_1, z_2) : z_1z_2 \not= 0\}$ in the 3-sphere $\{(z_1,z_2)\in\C^2:\ |z_1|^2+|z_2|^2=1\}.$
Indeed, $X$ corresponds to the \emph{diffuse} part of $\Spec^{\Lambda_2}(1)$ and $ \Spec^\Lambda(1,0)\sqcup \Spec^\Lambda(0,1)$ corresponds to the \emph{atomic} part as in \cite[Section 2.3]{Brothier-Wijesena24}.

Conversely, fix an irreducible $\cAL$-module $\sigma$ such that $\Pdim^{\Lambda}(\sigma)=(1,1)$ and $\Pi(\sigma) \notin X$. Then $(\sigma(b_{11})\ \sigma(b_{12})) \neq c(\sigma(b_{21})\ \sigma(b_{22}))$ for any $c \in \C$ (after identifying the operators $\sigma(b_{ij})$ with scalars). We then have
\[\Psi(\sigma) = \left(
\begin{pmatrix}
	\sigma(b_{11}) & 0\\ \sigma(b_{12}) & 0
\end{pmatrix},\
\begin{pmatrix}
	0 & \sigma(b_{21}) \\ 0 & \sigma(b_{22})
\end{pmatrix}, \C^2
\right).\]
Since $\sigma$ is irreducible, so is $\Psi(\sigma)$, so $\Psi$ maps $\Spec^{\Lambda}(1,1)\setminus X$ maps into $\Spec^{\Lambda_2}(2)$.

\textit{Mapping of moduli spaces of higher Pythagorean dimension.}
More generally, consider an irreducible $\cA_2$-module $\pi = (A_1,A_2, \C^d)$ and its corresponding $\cAL$-module $(\sigma, \C^d \oplus \C^d) := \Omega(\pi)$. If $\sigma$ is irreducible then $\Pdim^{\Lambda}(\Pi(\sigma)) = 2 \cdot \Pdim^{\Lambda_2}(\Pi(\pi))$. If $\sigma$ is reducible, then there exist subspaces $\fK_1, \fK_2 \subset \C^d$, with at least one being proper, such that
\[A_1(\fK_i) \subset \fK_1 \textrm{ and } A_2(\fK_i) \subset \fK_2\]
for $i=1,2$. This implies that $\fK_1 \oplus \fK_2$ is a submodule of $\sigma$.
This submodule is complete because $\pi$ is irreducible and hence $\Pi(\pi), \Pi(\sigma)$ are both irreducible. Irreducibility of $\pi$ further implies that $\C^d = \textrm{span}\{\fK_1, \fK_2\}$, and thus $\textrm{ran}(A_1) = \fK_1$ and $\textrm{ran}(A_2) = \fK_2$. Therefore,
\[\Pdim^{\Lambda}(\Pi(\Omega(\pi))) = \textrm{rank}(\pi(a_1)) + \textrm{rank}(\pi(a_2)). \]

We conclude with a concrete example such that $\Pdim^{\Lambda}(\Pi(\Omega(\pi)))$ lies \emph{strictly} between $\Pdim^{\Lambda_2}(\Pi(\pi))$ and $2\cdot \Pdim^{\Lambda_2}(\Pi(\pi))$. Consider the $\cA_2$-module $\pi = (A_1,A_2,\C^3)$ defined by
\[A_1 =
\begin{pmatrix}
	1/\sqrt{3} & 1/3 & 0\\
	1/\sqrt{3} & 1/3 & 0\\
	1/\sqrt{3} & -2/3 & 0
\end{pmatrix}
\textrm{ and }
A_2 =
\begin{pmatrix}
	0 & 1/3 & -2/\sqrt{6}\\
	0 & 1/3 & 1/\sqrt{6}\\
	0 & 1/3 & 1/\sqrt{6}
\end{pmatrix}.
\]
Then direct calculation shows that $\pi$ is irreducible, so $\Pdim^{\Lambda_2}(\pi) = 3$. From the above we have $\Pdim^{\Lambda}(\Pi(\Omega(\pi))) = 4$.

\subsection{Higher-rank graph \texorpdfstring{C$^*$}{C*}-algebras}

Consider the $2$-graph $\Lambda$ with one vertex and $4$ loops $f_1,f_2,g_1,g_2$ with the $f_i$'s of one colour and the $g_j$'s of the other colour,
and with factorisation property satisfying
$$
    f_i g_j = g_j f_i \text{ for all } 1\leq i,j\leq 2.
$$
The algebra $\cAL$ is generated by $a_1,a_2,b_1,b_2$ satisfying $ a_1^*a_1+a_2^*a_2=1=b_1^*b_1+b_2^*b_2$ and $a_ib_j=b_ja_i$ for $1\leq i,j,\leq 2.$
Recall that $\cA_2$ is the non-self-adjoint algebra corresponding to the Cuntz algebra $\cO_2$ associated to the graph with one vertex and two loops.
Then $\cAL \cong \cA_2\otimes \cA_2$ via an isomorphism taking $a_i$ to $A_i \otimes 1$ and $b_j$ to $1 \otimes A_j$.
This isomorphism descends to an isomorphism $\cOL\simeq \cO_2\otimes \cO_2$.
By Kirchberg's absorption theorem, we have $\cOL\simeq \cO_2$ under a non-explicit isomorphism.
Constructing representations of $\cOL$ using the presentation given by $\Lambda$ has the advantage that they restrict naturally to unitary representations of Brin's higher dimensional Thompson group $2V$ \cite{Brin04}.
Indeed, $2V$ embeds in the unitary group of $\cOL$ in a manner to the embedding of the classical Thompson group $V$ in $\cO_2$ via the Birget--Nekrashevych map, see \cite{Birget04,Nekrashevych04}.

\textbf{One-dimensional modules.}
A one-dimensional matrix module over $\cAL$ is a quadruple $(a_1,a_2,b_1,b_2)$ of complex numbers satisfying $\sum_{i=1}^2 |a_i|^2=1=\sum_{j=2}^2 |b_j|^2$.
Hence, the product $S_3\times S_3$ of the 3-sphere with itself indexes the irreducible classes of one-dimensional (irreducible) $\cAL$-modules.
Corollary~\ref{cor:classification} implies that $\Pi$ determined a bijection from $S_3\times S_3$ to the set $\Spec(1)$ of irreducible classes of representations $\pi$ of
$\cOL$ with P-dimension (with respect to the $2$-graph $\Lambda$) $\Pdim^\Lambda(\pi) = 1$.

\textbf{Two-dimensional modules.}
We construct a family of irreducible matrix modules $(A_i,B_j,\C\oplus\C)$ of dimension $2$.
Take $A_1=\diag(\lambda_1,\lambda_2)$ diagonal with distinct eigenvalues $\lambda_1,\lambda_2$ in the open unit ball $\Ball$ of $\C$.
Then necessarily $B_1,B_2$ are also diagonal since they commute with $A_1$.
If either $B_1$ or $B_2$ has distinct eigenvalues, then $A_2$ must also be diagonal since it commutes with the $B_i$.
Hence $\C\oplus\{0\}$ and $\{0\}\oplus \C$ are submodules. So to obtain an irreducible module, we must take
$B_i=\mu_i I$ for scalars $\mu_i$ satisfying $|\mu_1|^2+|\mu_2|^2=1$ (that is $(\mu_1,\mu_2) \in S_3$, the 3-sphere).
Fix any matrix $A_2$ such that $A_2^*A_2 = I - A_1^*A_1 = \diag(1-|\lambda_1|^2,1-|\lambda_2|^2)$ and such that $A_2$ is not triangular (upper or lower).
These are exactly the matrices $A_2=(c_{ij})_{ij}$ described as follows.
\begin{itemize}
\item $|c_{11}|^2+|c_{21}|^2=1-|\lambda_1|^2$ with $c_{21}\neq 0$ (this is possible since $|\lambda_1|< 1$). These choices correspond to the complement of the circle $c_{21}=0$ in $S_3$;
\item $c_{12}$ satisfying $|c_{12}|^2=\dfrac{1-|\lambda_2|^2}{1+ \frac{|c_{11}|^2}{|c_{21}|^2} }$, which is nonzero since $|\lambda_2|<1$ by assumption. This gives a circle $S_1$ of choices;
\item the remaining coefficient satisfies $c_{22}:= -\dfrac{\bar c_{11} \cdot c_{12}}{\bar c_{21}}$.
\end{itemize}
This family of $\cAL$-modules is parameterised by
$$\Ball\times \Ball\times S_3\times (S_3\setminus S_1)\times S_1$$
a smooth real manifold of dimension $11$.

By applying the functor $\Pi$ we obtain a large family of irreducible representations of $\cOL$ of Pythagorean dimension 2, and thus some unitary representations of $2V$ by restriction.
Classes of these associated irreducible representations of $\cOL$ are indexed by the orbits of $\PU(2)$ acting diagonally by conjugation on the set of quadruples of matrices $(A_1,A_2,B_1,B_2)$. This is a smooth manifold of dimension $8$ that embeds in the piece of spectrum $\Spec(2)$ of $\cOL$.

\textbf{Higher-dimensional modules.}
Take any two irreducible $\cA_2$-modules $\pi_1 = (A_1, A_2, \C^{d_1})$ and $\pi_2 = (B_1, B_2, \C^{d_2})$ (recall, $\cA_2$ is the non-self-adjoint algebra associated to the $1$-graph with one vertex and two loops). Define the operators
\[\ti A_i := A_i \otimes \id \textrm{ and } \ti B_j := \id \otimes B_j \]
for $i,j = 1,2$ which act on $\C^{d_1} \otimes \C^{d_2}$. It is clear that $\ti A_i$ commutes with $\ti B_j$. Thus, $\pi := (\ti A_i, \ti B_j, \C^{d_1} \otimes \C^{d_2})$ forms an $\cAL$-module. Moreover, since $\pi_1, \pi_2$ are irreducible, then it easily follows that $\pi$ is also irreducible.
From Proposition \ref{prop:irrep-manifold}, the set of irreducible representations of $\cO_2$ of P-dimension $d$ form a smooth manifold of dimension $3d^2$ (as explained earlier in this section that this set is nonempty). Subsequently, we obtain that the class of irreducible representations of $\cOL$ of P-dimension $d_1d_2$ contains a smooth manifold of dimension $3d_1^2 + 3d_2^2$. However, it is not immediately clear when the $\cAL$-modules of the form $\pi$ above are equivalent to each other.

\newcommand{\etalchar}[1]{$^{#1}$}

\end{document}